\theoremstyle{definition}
\newtheorem{definition}{Definition}[section]
\newtheorem{example}[definition]{\textit{Example}}
\newtheorem{theorem}[definition]{Theorem}	
\newtheorem{lemma}[definition]{Lemma}
\newtheorem{corollary}[definition]{Corollary}
\newtheorem{proposition}[definition]{Proposition}
\newtheorem{remark}[definition]{Remark}
\newcounter{appcount}
\newtheorem{applemma}[appcount]{Lemma}
\numberwithin{equation}{section}
\newcommand{\Bo}{\mathcal{B}}
\newcommand{\C}{\mathbb{C}}
\newcommand{\Cp}{\C_{+}}
\newcommand{\Cm}{\C_{-}}
\newcommand{\N}{\mathbb{N}}
\newcommand{\R}{\mathbb{R}}
\newcommand{\Rp}{\R_{+}}
\newcommand{\ide}{\operatorname{I}}
\newcommand{\Semi}{$C_0$-semigroup }
\newcommand{\DA}{D(A)}
\newcommand{\veps}{\varepsilon}
\newcommand{\Hinf}{\mathcal{H}^{\infty}(\Cm)}
\newcommand{\Sect}{\operatorname{Sect}}
\renewcommand{\Hinf}{{H}^{\infty}}
\newcommand{\HinfO}{{H}_{0}^{\infty}}
\begin{document}

\title[On measuring the unboundedness of the $H^{\infty}$-calculus] {On measuring unboundedness of the $H^{\infty}$-calculus for generators of analytic semigroups}
\author{Felix L. Schwenninger}
\email{felix.schwenninger@uni-hamburg.de}

\address{Dept.\ of Applied Mathematics, University of Twente, The Netherlands}

\begin{abstract}

We investigate the boundedness of the $H^{\infty}$-calculus by
estimating the bound $b(\varepsilon)$ of the mapping
$H^{\infty}\rightarrow \mathcal{B}(X)$: $f\mapsto f(A)T(\varepsilon)$ for
$\varepsilon$ near zero.  Here, $-A$ generates the analytic semigroup
$T$ and  $H^{\infty}$ is the space of bounded analytic functions on a
domain strictly containing the spectrum of $A$.  We show that
$b(\varepsilon)=\mathcal{O}(|\log\varepsilon|)$ in general, whereas
$b(\varepsilon)=\mathcal{O}(1)$ for bounded calculi. 
This generalizes a result by Vitse and complements work by Haase and Rozendaal for non-analytic semigroups.
We discuss the sharpness of our bounds and show that single square function estimates yield $b(\varepsilon)=\mathcal{O}(\sqrt{|\log\varepsilon|})$.
\end{abstract}
\subjclass[2010]{47A60 (primary), 47D03, 42B35}
\small
\keywords{
Functional calculus; $H^{\infty}$-calculus, Operator-semigroups, Analytic semigroups, Schauder multiplier, Square function estimate, Analytic Besov space
}
\maketitle


\section{Introduction}
	
Functional calculus, the procedure to define a new operator as
evaluation of an intial operator in a (scalar-valued) function, had
its beginnings with von Neumann's work \cite{vonneumann} more than 80
years ago. Typically, the aim is to preserve the algebraic structures
of the set of functions for the operators, such as linearity and
multiplicativity.  Therefore, an ultimate goal is to get a
homomorphism from a function algebra to an operator algebra, e.g.\ the
Banach algebra of bounded operators on a Banach space.  However,
sometimes such a mapping is not possible for the chosen pair of
algebras and we are forced to weaken the homomorphism property. This
can be done by considering a subclass of functions first, on which a
homomorphism is possible, and extend this mapping (algebraically), see
e.g.\ \cite[Chapter 1]{haasesectorial} and the references therein.%

In the case of the $\Hinf$-calculus this means that we may get
unbounded operators.  Here, we consider the pair of sectorial
operators $A$ and functions $f$ which are bounded and analytic on a
sector that contains the spectrum of $A$, 
see Section \ref{notions} for a brief introduction. 
 From the very beginnings of this calculus 30 years ago, \cite{mcintoshHinf},
 it has been known that
we cannot expect the $\Hinf$-calculus to be bounded, i.e., that $f(A)$
is a bounded operator for every $f\in\Hinf$, \cite{mcintoshHinfNo}.
Starting with the work by McIntosh, \cite{mcintoshHinf}, for sectorial
operators on Hilbert spaces, the $\Hinf$-calculus turned out to be
very useful in various situations, in particular for the study maximal
regularity, see \cite[Chapter 9]{haasesectorial}, \cite{KunstmannWeis04} and the references
therein. For a recent survey and open problems of the $H^{\infty}$-calculus for sectorial operators we refer to \cite{Fackler14Reg}.

The question of boundedness of the calculus in a particular situation
remains crucial in the applications and has been subject to research over the last decades, see e.g.\ \cite{cowlingdoustmcintoshyagi,KaltonWeis01,KunstmannWeis04} and \cite[Chapter 5]{haasesectorial} for an overview.
The main goal of this work is to
investigate and `measure' the (un)boundedness of the $\Hinf$-calculus.
\newline
Functional calculus for subalgebras of $H^{\infty}$ are of interest in their own right. 
For instance, in \cite{Vitse05} Vitse proves estimates for a  Besov space functional calculus for analytic semigroups, (see  \cite{haasetransference11} for the case of $C_{0}$-semigroup generators on Hilbert spaces). We will discuss this result in Section \ref{discussion} and give a slight improvement. 
Furthermore, the corresponding framework of $H^{\infty}$-calculus for $C_{0}$-semigroup generators was recently developed in \cite{battyhaasemubeen,haasehalfplaneoperators,mubeenPhD} where \textit{half-plane operators} take over the role of sectorial operators. \newline
Let us state a first observation which can be seen as the starting
point for the results to come.  For the precise definition of the used
notions and a proof we refer to Section \ref{notions} and Proposition \ref{prop3}.
\begin{proposition}\label{prop1}%
  Let $A$ be a densely defined, invertible, sectorial operator of
  angle $\omega<\frac{\pi}{2}$ on the Banach space $X$.  Then, for
  $\phi\in(\omega,\pi)$ the $\Hinf(\Sigma_{\phi})$-calculus is bounded
  if and only if
\begin{equation}\label{prop:bddcalceq1}
	\forall f\in\Hinf(\Sigma_{\phi}) \ \quad \limsup_{\veps\to0^{+}}\|(fe_{\veps})(A)\|=:C_{f}<\infty,
\end{equation}
where $e_{\veps}(z)=e^{-\veps z}$ and $\Sigma_{\phi}:=\left\{z\in\C:z\neq0,|\arg(z)|<\phi\right\}$.
\end{proposition}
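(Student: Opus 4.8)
The plan is to reduce everything to the identity $(f\cdot e_{\veps})(A)=f(A)T(\veps)$, where $T(\veps):=e_{\veps}(A)$ is the analytic semigroup generated by $-A$ (this uses $\omega<\frac{\pi}{2}$), and to the strong convergence $T(\veps)x\to x$ as $\veps\to0^{+}$, which holds for every $x\in X$ because $A$ is densely defined. Write $K:=\sup_{\veps>0}\|T(\veps)\|<\infty$. Since $\phi$ may exceed $\frac{\pi}{2}$, $e_{\veps}$ itself need not lie in $\Hinf(\Sigma_{\phi})$; I read $(f\cdot e_{\veps})(A)$ through the sectorial calculus on a sub-sector $\Sigma_{\psi}$ with $\omega<\psi<\frac{\pi}{2}$, on which $f\cdot e_{\veps}\in\Hinf(\Sigma_{\psi})$, and by multiplicativity and consistency of the sectorial calculus this operator equals $f(A)T(\veps)$. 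Because $A$ is invertible, $\sigma(A)$ is bounded away from $0$, and the exponential decay of $e_{\veps}$ along $\Sigma_{\psi}$ makes the defining Cauchy integral absolutely convergent, so $(f\cdot e_{\veps})(A)$ is a \emph{bounded} operator for each fixed $\veps>0$. I will freely use the standard facts that $f(A)$ is a closed operator with $\DA\subseteq D(f(A))$ (hence densely defined) and that $T(\veps)$ commutes with $f(A)$.

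For the ``only if'' direction, suppose the $\Hinf(\Sigma_{\phi})$-calculus is bounded with constant $M$. Then, for every $f\in\Hinf(\Sigma_{\phi})$ and every $\veps>0$,
\[
\|(f\cdot e_{\veps})(A)\|=\|f(A)T(\veps)\|\le\|f(A)\|\,\|T(\veps)\|\le MK\,\|f\|_{\Hinf(\Sigma_{\phi})},
\]
so $C_{f}\le MK\|f\|_{\Hinf(\Sigma_{\phi})}<\infty$.

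For the ``if'' direction I argue in two steps. First, fix $f\in\Hinf(\Sigma_{\phi})$ and $x\in D(f(A))$. Then $f(A)T(\veps)x=T(\veps)f(A)x\to f(A)x$ as $\veps\to0^{+}$, while $\|f(A)T(\veps)x\|=\|(f\cdot e_{\veps})(A)x\|\le\|(f\cdot e_{\veps})(A)\|\,\|x\|$; passing to the limit gives $\|f(A)x\|\le C_{f}\|x\|$ on the dense domain $D(f(A))$, and since $f(A)$ is closed it follows that $f(A)\in\Bo(X)$ with $\|f(A)\|\le C_{f}$. Thus $f\mapsto f(A)$ is an everywhere-defined linear (indeed multiplicative) map $\Hinf(\Sigma_{\phi})\to\Bo(X)$. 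Second, I show this map is automatically bounded via the closed graph theorem: if $f_{n}\to f$ in $\Hinf(\Sigma_{\phi})$ and $f_{n}(A)\to S$ in $\Bo(X)$, then, since $g\mapsto(1+A)^{-1}g(A)=\big((1+\cdot)^{-1}g\big)(A)$ is given by an absolutely convergent Cauchy integral and is therefore continuous on $\Hinf(\Sigma_{\phi})$, we get $(1+A)^{-1}f_{n}(A)\to(1+A)^{-1}f(A)$ and also $(1+A)^{-1}f_{n}(A)\to(1+A)^{-1}S$; as $(1+A)^{-1}$ is injective, $S=f(A)$. Hence there is a uniform $M$ with $\|f(A)\|\le M\|f\|_{\Hinf(\Sigma_{\phi})}$, i.e.\ the calculus is bounded.

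The only delicate, but not deep, points are the functional-calculus bookkeeping of the first paragraph — identifying $(f\cdot e_{\veps})(A)$ with $f(A)T(\veps)$ across the two sectors and checking its boundedness for each $\veps>0$ (this is exactly where invertibility of $A$ is used) — and the closed graph step. The genuinely substantive ingredient is the strong approximation $T(\veps)\to I$ as $\veps\to0^{+}$, which is where the analytic-semigroup structure and the density of $\DA$ enter; I do not expect any hard estimate to be required.
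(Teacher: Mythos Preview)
Your proof is correct and follows essentially the same route as the paper's: both directions hinge on the identification $(fe_{\veps})(A)=f(A)e_{\veps}(A)$ (read on a sub-sector $\Sigma_{\psi}$ with $\psi<\frac{\pi}{2}$), the boundedness of $(fe_{\veps})(A)$ for each $\veps>0$ (using invertibility of $A$), the strong convergence $e_{\veps}(A)\to I$ on $X$, and a closed-graph/Convergence-Lemma argument for the uniform bound. The only cosmetic differences are that the paper works with $x\in D(A)$ rather than $x\in D(f(A))$ and invokes the Convergence Lemma for the final step where you give an explicit closed-graph argument via $(1+A)^{-1}$; neither changes the substance.
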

  In Example \ref{ex:invertibility}, we show that the assumption
  of $A$ being invertible is needed to guarantee that $(fe_{\veps})(A)$ is a bounded operator for $\veps>0$.
 On the other hand, if we allow for $\omega=\frac{\pi}{2}$, then $(fe_{\veps})(A)$ can be unbounded, since $e_{\veps}$ cannot control the behavior of $f$ along the imaginary axis.
  However, it is a remarkable result that by incorporating the geometry of the Banach space, one indeed gets that $(fe_{\veps})(A)$ is bounded for, not necessarily analytic, $C_{0}$-semigroup generators $-A$ (which are sectorial operators of angle $\frac{\pi}{2}$). More precisely, on Hilbert spaces $(fe_{\veps})(A)$ always defines a
  bounded operator if $-A$ generates an exponentially stable semigroup
  and if $f$ is bounded and analytic on the right half-plane. This was first proved by Zwart in 
  \cite[Thm.~2.5]{ZwartAdmissible}. Using powerful \textit{transference principles} from \cite{haasetransference11},  Haase and Rozendaal generalized this to arbitrary Banach spaces for $f$ in the\textit{ analytic multiplier algebra} $\mathcal{AM}_{p}(X)\subset\Hinf(\Cp)$, $p\geq1$, see in \cite{HaaseRozendaal13}. Note that the latter inclusion is a strict embedding unless $p=2$ and $X$ is a Hilbert space (in which case equality holds by Plancherel's theorem). They also showed that, alternatively,  one can make additional assumptions on the semigroup rather than on the function space. Namely, by requiring that the (rescaled) semigroup is $\gamma$-bounded, see \cite[Thm.~6.2]{HaaseRozendaal13}. Again, this result generalizes the Hilbert space case as $\gamma$-boundedness coincides with classical boundedness then. Moreover, although norm bounds in terms of $\veps$ were already present in \cite{ZwartAdmissible}, they were significantly improved in \cite{HaaseRozendaal13},  see also below. 
We  remark that the definition of functional calculus for non-analytic $C_{0}$-semigroups differs by nature from the one for sectorial operators. Using the axiomatics of holomorphic calculus in \cite[Chapter 1]{haasesectorial}, this can be done by either directly extending the well-known Hille-Phillips calculus, see \cite{HaaseRozendaal13}, or the above-mentioned  calculus for half-plane operators, \cite{battyhaasemubeen,haasehalfplaneoperators,mubeenPhD}. In \cite{SchweZwa2012,ZwartAdmissible} an alternative definition using notions from systems theory is used. However, as all these techniques are extensions of the Hille-Phillips calculus, the notions are consistent in the considered situation. 
	
\mbox{}From Proposition \ref{prop1} we see that the behavior of the norm $\|(fe_{\veps})(A)\|$
for $\veps$ near zero characterizes the boundedness of the
$\Hinf$-calculus for the sectorial operator $A$ of angle less than
$\frac{\pi}{2}$ that has $0$ in its resolvent set.  The negative,
$-A$, of such an operator corresponds to the generator of an analytic
and exponentially stable $C_{0}$-semigroup $T$. By observing that $T(\veps)=e_{\veps}(A)$, we derive that
$(fe_{\veps})(A)=f(A)T(\veps)$ for $\veps>0$.  As the $\Hinf$-calculus need not be
bounded, in general, we cannot bound $\|(fe_{\veps})(A)\|$ uniformly in
$\veps$.  Therefore, it is our goal to establish estimates of the form
\begin{equation}\label{eq:int1}
			\|(fe_{\veps})(A)\| \leq b(\veps)\cdot \|f\|_{\infty},
\end{equation}
for all $f\in\Hinf$ on a sector larger than the sector of sectorality
of $A$.  In general, $b(\veps)$ will become unbounded for
$\veps\to0^{+}$.

In Theorem \ref{thm:expstab} we show that
$b(\veps)=\mathcal{O}(|\log\veps|)$ as $\veps\to0^{+}$ on general
Banach spaces.  For $0\notin\rho(A)$, we derive a similar result for
functions $f\in\Hinf$ which are holomorphic at $0$, see Theorem
\ref{thm1}.  It turns out that the latter result generalizes a result
by Vitse in \cite{Vitse05} and improves the dependence on the sectorality constant $M(A,\phi)$ significantly,
see Section \ref{sec:Vitse}.
Moreover, our techniques seem to be more elementary as we do not employ the Hille-Phillips calculus. 
\\
For Hilbert spaces and general exponentially stable
$C_0$-semigroup generators $-A$ an estimate of the form (\ref{eq:int1})
$b(\veps)=\mathcal{O}(\veps^{-\frac{1}{2}})$ was derived in
\cite{ZwartAdmissible}. It was subsequently improved to
$b(\veps)=\mathcal{O}(|\log\veps|)$ by Haase and Rozendaal,
\cite[Theorem 3.3]{HaaseRozendaal13}, using an adaption of a lemma due to Haase, Hyt\"onen, \cite[Lem.~A.1]{haasetransference11}.
As mentioned in the lines following Proposition \ref{prop1} above, the techniques rely on the
geometry of the Hilbert space and cannot be extended to
general Banach spaces without either changing to another function space, \cite[Thms.~3.3 and 5.1]{HaaseRozendaal13},  or strengthening the assumption on the semigroup using $\gamma$-boundedness, \cite[Thm.~6.2]{HaaseRozendaal13}.
Hence, our results can be seen as additionally requiring
analyticity of the semigroup, but dropping any additional assumption on the Banach space.
As will be visible in the proofs of Theorems \ref{thm1} and \ref{thm:expstab}, the logarithmic dependence on $\veps$ is more elementary to derive than for general semigroups.

Let us remark that estimates of the form (\ref{eq:int1}) reveal information about the domain of $f(A)$. 
In particular, $b(\veps)=\mathcal{O}(|\log\veps|)$ implies that $D(A^{\alpha})\subset D(f(A))$ for $\alpha>0$, see \cite[Thm.~3.7]{HaaseRozendaal13}. For instance, this can be used to derive convergence results for numerical schemes, see, e.g., \cite{EgertRozendaal13}.

In Section \ref{sec:diagop}, we show that the logarithmic behavior is essentially optimal on
Hilbert spaces by means of a scale of examples of Schauder basis multipliers. More precisely,
Theorem \ref{sharpnesslog} states that for any $\gamma<1$, there exists a
 sectorial operator on $L^{2}(-\pi,\pi)$ such that $b(\veps)$
grows like $|\log(\veps)|^{\gamma}$. In the examples we also focus on tracking the dependence on the sectorality constant.

\textit{Square function estimates} or \textit{quadratic estimates}
play a crucial role in characterizing bounded $\Hinf$-calculi for
sectorial operators, see \cite{cowlingdoustmcintoshyagi,Yakubovich2011,KaltonWeis01,KunstmannWeis04,mcintoshHinf}.
On Hilbert spaces this means that for some function $g\in\Hinf$ an estimate of the form
\begin{equation*}
		\int_{0}^{\infty} \|g(tA)x\|^{2}\frac{dt}{t}\leq K^{2} \|x\|^{2},\quad \forall x\in X,
\end{equation*} 
has to hold and an analogous one for the adjoint $A^{*}$.  Whereas it
is known that such an estimate for only one of $A$ or $A^{*}$ is not
sufficient for a bounded calculus, as shown  by Le Merdy in \cite{LeMerdy2003}, we show in Section
\ref{sec:Squarefunct} that a single estimate does improve the
situation in the way that $b(\veps)=\mathcal{O}(\sqrt{|\log\veps|})$
then.  Again, by means of an example it is shown that this behavior is
essentially sharp.
\newline
In Section \ref{discussion} we compare our result with the one by Haase, Rozendaal in the case of an analytic semigroup on a Hilbert space. Furthermore, using the results of Section \ref{sec:mainresults}, we derive a slightly improved estimate for the Besov space functional calculus introduced by Vitse in \cite{Vitse05}. We conclude by mentioning the relation to \textit{Tadmor--Ritt} or \textit{Ritt} operators which can be seen as the discrete analog for analytic semigroups.

\subsection{Semigroups, sectorial operators and functional
  calculus}\label{notions}

In the following let $X$ denote a complex Banach space.  If $X$ is a
Hilbert space the inner product will be denoted by $\langle
\cdot,\cdot \rangle$.  $\Bo(X,Y)$ is the Banach algebra of bounded
linear operators from $X$ to $Y$, where $Y$ is another Banach space,
and $\Bo(X):=\Bo(X,X)$.

For a \Semi $T$ on $X$, $-A$ denotes its generator.
   The resolvent set of $A$ will be denoted by $\rho(A)$
   and $\sigma(A)$ refers to its spectrum.  For $\lambda\in\rho(A)$,
   $R(\lambda,A)=(\lambda\ide-A)^{-1}$.  $T$ is called an
   \textit{analytic} \Semi if it can be extended to a sector in the
   complex plane, see e.g.\ \cite[Def.~II.4.5]{EngelNagel}.

   For $\delta\in(0,\pi)$ define the sector
   $\Sigma_{\delta}=\left\{z\in\C:|z|>0,|arg(z)|<\delta\right\}$
   and set $\Sigma_{0}=(0,\infty)$.  A linear operator $A$ on $X$ is called
   \textit{sectorial of angle $\omega\in[0,\pi)$}, if
   $\sigma(A)\subset \overline{\Sigma_{\omega}}$ and for all
   $\delta\in(\omega,\pi)$
\begin{equation}\label{eq:sectorial}
	M(A,\delta):=\sup \left\{\|\lambda R(\lambda,A)\| : \lambda\in\C\setminus\overline{\Sigma_{\delta}}\right\} <\infty.
\end{equation}	
By $\Sect(\omega)$ we denote the set of sectorial operators on $X$ of angle
$\omega$. The minimal $\omega$ such that $A\in\Sect(\omega)$ is denoted by $\omega_{A}$. We recall that there is a one-to-one correspondence between
sectorial operators and generators of analytic semigroups,
namely, $A\in\Sect \left(\omega\right)$ with $\omega<\frac{\pi}{2}$ and $\overline{D(A)}=X$ if and only if 
 $-A$  generates a bounded analytic $C_{0}$-semigroup, see e.g.\ \cite[Thm.~II.4.6]{EngelNagel}. \newline
We  now briefly introduce the (holomorphic) functional calculus for sectorial operators. For a detailed treatment we refer the reader to the book of Haase, \cite{haasesectorial}.
Let $\Omega\subset\C$ be an open set and let $H(\Omega)$  be the analytic functions on $\Omega$. The Banach algebra of bounded
analytic functions on $\Omega$, equipped with
$\|f\|_{\infty,\Omega}:=\sup_{z\in\Omega}|f(z)|$, is denoted by
$\Hinf(\Omega)$. As we will mainly use sectors
$\Omega=\Sigma_{\delta}$, we abbreviate
$\|f\|_{\infty,\Sigma_{\delta}}$ by $\|f\|_{\infty,\delta}$ or write
$\|f\|_{\infty}$ if the set is clear from the context.  For
$\delta=\frac{\pi}{2}$ we will write
$\Hinf(\Cp)=\Hinf(\Sigma_{\delta})$. Furthermore, let us define
\begin{align*}
\Hinf_{(0)}(\Sigma_\delta )={}&\left\{f\in\Hinf (\Sigma_{\delta}): |f(z)|\leq C |z|^{-s}\text{ for some }C,s>0\right\},\\
	\HinfO (\Sigma_{\delta})={}&\left\{f\in\Hinf (\Sigma_{\delta}): |f(z)|\leq C \tfrac{|z|^{s}}{1+|z|^{2s}}\text{ for some }C,s>0\right\},
\end{align*}
which are the bounded analytic functions which decay polynomially at
$\infty$ (and $0$). 

Let $A$ be a sectorial operator of angle $\omega$. Then, the Riesz-Dunford integral
\begin{equation}\label{rieszdunford}
		f(A)=\frac{1}{2\pi i} \int_{\Gamma} f(z)R(z,A)\ dz,
\end{equation}
	is well-defined in $\Bo(X)$ in each of the following situations, with $\omega<\delta'<\delta<\pi$, 
	\begin{enumerate}
	\item $f\in\HinfO(\Sigma_{\delta})$ and $\Gamma=\partial\Sigma_{\delta'}$, where $\partial\Sigma_{\delta}$ denotes the boundary of $\Sigma_{\delta}$,
	\item $f\in\Hinf_{(0)}(\Sigma_{\delta})\cap H(B_{r}(0))$ for some $r>0$ and $\Gamma=\partial \left(B_{r'}(0)\cup\Sigma_{\delta'}\right)$ for $r'\in(0,r)$ ,
	\item $f\in\Hinf_{(0)}(\Sigma_{\delta})$, $0\in\rho(A)$ and $\Gamma=\partial\left(\{z:\Re z>r\}\cap\Sigma_{\delta'}\right)$ for $r>0$ sufficiently small,
	\end{enumerate}
	where $B_{r}(0)=\left\{z\in\C:|z|<r\right\}$. 
	The above paths $\Gamma$ are orientated positively and by Cauchy's theorem 
	it follows that the definitions are consistent and independent of the choice of $\delta'$ and $r'$. \newline
	The mapping $f\mapsto f(A)$ is an algebra homomorphism from $\HinfO(\Sigma_{\delta})$ to $\Bo(X)$. 
	It is straight-forward to extend it to a homomorphism $\Phi$ from
	$\mathcal{E}=\HinfO(\Sigma_{\delta})\oplus\langle1\rangle\oplus\langle\tfrac{1}{1+z}\rangle$ to $\Bo(X)$. 
	The tuple $(\mathcal{E},\mathcal{H}(\Sigma_{\delta}),\Phi)$ is called a \textit{primary calculus} which, by a \textit{regularization argument}, can be extended to more general 
	$f\in H(\Sigma_{\delta})$. This  algebraic procedure yields an, in general unbounded, calculus of closed operators. 
	The regularization argument can be sketched as follows. The set of \textit{regularizers} is defined as
	\begin{equation*}
	 {\rm Reg}_{A}=\left\{e\in\HinfO(\Sigma_{\delta}):e(A)\text{ is injective}\right\}
	\end{equation*}
	and the functions that can be \textit{regularized} by elements in ${\rm Reg}_{A}$ are
	\begin{equation*}
		\mathcal{M}_{A}=\left\{f\in H(\Sigma_{\delta}):\exists e\in {\rm Reg}\text{ with } (ef)\in\HinfO(\Sigma_{\delta})\right\}.
	\end{equation*}
	Then, for any $f\in\mathcal{M}_{A}$, we can define $f(A)=e(A)^{-1}(ef)(A)$ which turns out to be independent of the choice of $e$. If $A$ is injective, it holds that $\Hinf(\Sigma_{\delta})\subset \mathcal{M}_{A}$. 
	One can show that the extension procedure is in conformity with the Riesz-Dunford integral definition in items 2 and 3 above. Clearly, for invertible $A$ one can do the analogous construction with a primary calculus on $\Hinf_{(0)}(\Sigma_\delta)$, which extends the previous calculus.
	For  detailed and more general axiomatic treatment of the construction of the calculus 
	we refer to Chapter 1 and 2 in \cite{haasesectorial}. \newline
	Let  $(\mathcal{F},\|\cdot\|_{\mathcal{F}})$ be a  Banach algebra such that $\mathcal{F}$ is a subalgebra of $\Hinf(\Sigma_{\delta})$ and that $f(A)$ is defined by the above calculus for all $f\in\mathcal{F}$.
	Following Haase \cite[Chapter 5.3]{haasesectorial}, we say that the $\mathcal{F}$-calculus is \textit{bounded} if $f(A)$ is bounded for all $f\in\mathcal{F}$ 
	and 
	\begin{equation}\label{ineq:bddcalc}
		\exists C>0:\quad\|f(A)\|\leq C\|f\|_{\mathcal{F}},\quad \forall f\in \mathcal{F}.
	\end{equation}
	For $\mathcal{F}$ closed with $\|\cdot\|_{\mathcal{F}}=\|\cdot\|_{\infty,\delta}$ and $A$ injective, (\ref{ineq:bddcalc}) follows already if $f(A)$ is bounded for all $f\in\mathcal{F}$, by the Convergence Lemma, \cite[Prop.~5.1.4]{haasesectorial} and the Closed Graph Theorem.
	\newline
	By $e_{\veps}$ we denote the function $z\mapsto e^{-\veps z}$ 
	which lies in $\Hinf_{(0)}(\Sigma_{\delta})$ for $\delta<\frac{\pi}{2}$ and $\veps>0$.
	
In the following the \textit{exponential integral} function
\begin{equation}\label{eq:expint}
			{\rm Ei}(x)=\int_{1}^{\infty}\frac{e^{-xt}}{t}\ dt,\quad x>0,
\end{equation}
will be used several times. It is clear that ${\rm Ei} (x)$ is
decreasing. The asymptotic behavior of ${\rm Ei}(x)$ is reflected in
the estimates
\begin{equation}\label{eq:expintEST}
			\tfrac{1}{2}e^{-x}\log\left(1+\tfrac{2}{x}\right) < {\rm Ei}(x)< e^{-x}\log\left(1+\tfrac{1}{x}\right), \quad x>0,
\end{equation}
which go back to Gautschi \cite{Gautschi59} and can also be found in
\cite[5.1.20]{AbramowitzExpInt}.  This implies that
\begin{equation}\label{eq:EiEST2}
	\tfrac{1}{2e}|\log(x)|<{\rm Ei}(x)<|\log (x)|, \quad x\in\left(0,\tfrac{1}{2}\right).
\end{equation}

   We write $E(z)\sim F(z)$, if there exist absolute constants $K_{1}, K_{2}>0$ with $K_{1}\leq E(z)\leq K_{2} F(z)$ for all considered $z$. For example, ${\rm Ei}(x)\sim |\log x|$ for $x<\tfrac{1}{2}$ by \eqref{eq:EiEST2}.
	

\section{Main results}\label{sec:mainresults}
Unless stated explicitly, $X$ will always denote a general Banach space.
\subsection{Sectorial operators and functions holomorphic at $0$}

The following example shows that the assumption $0\in\rho(A)$ cannot be neglected, if we want to study estimates of the form \eqref{eq:int1} for $f\in \Hinf(\Cp)$.
\begin{example}\label{ex:invertibility}%
  Let $-B$ be the generator of the bounded analytic semigroup $S$ with
  $0\in\rho(B)$.  Assume that the $\Hinf(\Cp)$-calculus is not
  bounded, thus, there exists $f\in\Hinf(\Cp)$ such that $f(B)$ is
  unbounded.  Such examples exist even on Hilbert spaces, see e.g.\
  \cite{BaillonClement91} or Section \ref{sec:diagop}. Then, $A=B^{-1}$ is bounded, sectorial of
  the same angle as $B$, see \cite{haasesectorial}, and has dense
  range. Thus $g(A)$ is defined by the $\Hinf$-calculus for sectorial
  operators for $g\in\Hinf$ in some sector.  Furthermore, by the
  \textit{composition rule}, see \cite[Prop.~2.4.1]{haasesectorial}, we
  have that for $h=(z\mapsto z^{-1})$,
\begin{equation*}
			 (f\circ h)(A)=f(B),
\end{equation*}
		where $(f\circ h)\in\Hinf(\Cp)$.
		Since $A$ is bounded, $A$ even generates a group $T(\veps)=e_{\veps}(A)$. Hence, $\left((f\circ h)\cdot e_{\veps}\right)(A)=f(B)T(\veps)$
		cannot be bounded for any $\veps>0$.
\end{example} 
	
The reason why we cannot expect $(fe_{\veps})(A)$ to be a
bounded operator if $0\notin\rho(A)$ is that the integrand in
(\ref{rieszdunford}) may have a singularity at $0$. However, instead
of making the resolvent exist at $0$, we can pass over to a smaller set of functions in $\Hinf$.

\begin{proposition}\label{prop3}
	Let $A$ be a densely defined, sectorial operator of angle $\omega<\frac{\pi}{2}$ on the Banach space $X$ with dense range. Let $\phi\in(\omega,\pi)$ and $\mathcal{F}\subset\Hinf(\Sigma_{\phi})$ such that 
	\begin{enumerate}[label=(\roman*)]
		\item\label{ass1} $D(A)\subset D(f(A))$ for all $f\in\mathcal{F}$, and
		\item\label{ass2} $\forall$ $f\in \Hinf(\Sigma_{\phi})$ there exists  $\{f_{n}\}_{n\in\N}\subset\mathcal{F}$ such that $f_{n}\to f$ pointwise and $\sup_{n}\|f_{n}\|_{\infty,\phi}<\infty$.
		\end{enumerate}
		Then, the $\Hinf(\Sigma_{\phi})$-calculus is bounded if and only if 
		\begin{align}\label{eq23}
		\exists C>0\ \forall f\in\mathcal{F}:\quad \limsup\nolimits_{\veps\to0^{+}}\|(f e_{\veps})(A)\|< C \|f\|_{\infty,\phi}.
		\end{align}
		If $\mathcal{F}=\Hinf(\Sigma_{\phi})$, then $\|f\|_{\infty,\phi}$ in \eqref{eq23} can be replaced by any constant $C_{f}>0$.
\end{proposition}
\begin{proof}
		Note that $A$ is injective as it is a sectorial operator with dense range, see \cite[Prop.~2.1.1]{haasesectorial}.  Thus, $f(A)$ is defined as a closed operator for every $f\in\Hinf(\Sigma_{\phi})$.
		 Since $e_{\veps}$ is holomorphic at $0$ and in $\Hinf_{(0)}(\Sigma_{\frac{\pi}{4}})$, 
  we have that $e_{\veps}(A)$ is bounded. Furthermore, $R(e_{\veps}(A))\subset D(A)$ for $\veps>0$, $\sup_{\veps>0}\|e_{\veps}(A)\|<\infty$ and $\lim_{\veps\to0^{+}}e_{\veps}(A)x=x$ for all $x\in\overline{D(A)}=X$, see \cite[Prop.~3.4.1]{haasesectorial}. Hence, by assumption \ref{ass1}, $f(A)e_{\veps}(A)=(fe_{\veps})(A)\in\Bo(X)$ for all $f\in\mathcal{F}$. \newline
   If the calculus is bounded, $\|f(A)\|\leq \tilde{C}\|f\|_{\infty}$ for some $\tilde{C}>0$ and all $f\in H^{\infty}(\Sigma_{\phi})$.  Thus,
\begin{equation*}
		\|(fe_{\veps})(A)\|=\|f(A)e_{\veps}(A)\| \leq \tilde{C}~ \|e_{\veps}(A)\|\ \|f\|_{\infty,\phi}\leq C \|f\|_{\infty,\phi},\quad f\in\mathcal{F},
\end{equation*}
where $\tilde{C}$ does not depend on $\veps$.  Therefore,
(\ref{eq23}) holds. 
Conversely, let (\ref{eq23}) be satisfied.  
For $f\in\mathcal{F}$ and $x\in\DA$, we have that
\begin{equation*}
	\|f(A)x\| \leq \|f(A)x-e_{\veps}(A)f(A)x\| + \|e_{\veps}(A)f(A)x \|.
\end{equation*}
For $\veps\to0^{+}$, the first term on the right-hand-side tends to
zero by the properties of $(e^{-\veps\cdot})(A)$, see above.
Since $e_{\veps}(A)f(A)x=(e_{\veps}f)(A)x$ for $x\in D(A)$, see \cite[Thm.~
1.3.2.c)]{haasesectorial}, the second term can be estimated by  the
assumption of (\ref{eq23}). As $D(A)$ is dense, we get that $f(A)$ is bounded and
\begin{equation}\label{prop2:eq2}
			\|f(A)\|\leq \limsup_{\veps\to0^{+}} \| (fe_{\veps})(A)\| \leq C \|f\|_{\infty,\phi}, \quad f\in\mathcal{F}.
		\end{equation}
		By assumption \ref{ass2}, \eqref{prop2:eq2} and the Convergence Lemma \cite[Prop.~5.1.4b)]{haasesectorial} (here, we use that $D(A)$ and $R(A)$ are dense), we conclude that $\|f(A)\|\leq C \|f\|_{\infty,\phi}$ for all $f\in \Hinf(\Sigma_{\phi})$.\newline
		 If $\mathcal{F}=\Hinf(\Sigma_{\phi})$ and if we replace $\|f\|_{\infty,\phi}$ by some constant $C_{f}>0$ in \eqref{eq23}, then, in \eqref{prop2:eq2}, we derive that $\|f(A)\|\leq C_{f}$ for $f\in\Hinf(\Sigma_{\phi})$, which implies that the $\Hinf(\Sigma_{\phi})$-calculus is bounded.
		\end{proof}
		Regarding Proposition \ref{prop3}, in this paper we will study the following situations.
			\begin{itemize}
			\item $\mathcal{F}=\Hinf(\Sigma_{\phi})$ und $A$ invertible (then, $(z\mapsto \frac{z}{(1+z)^{2}})$ is a regularizer for any $f$),
			\item $\mathcal{F}=\{f\in\Hinf(\Sigma_{\phi}):f\text{ holomorphic at }0\}$ (then, for $f\in\mathcal{F}$, $(\frac{f(z)}{1+z})(A)$ is defined by (\ref{rieszdunford})).
		\end{itemize}
		It is not hard to see that in the above cases, \ref{ass1} and \ref{ass2} from Proposition \ref{prop3} are fulfilled. Hence, Proposition \ref{prop3} implies Proposition \ref{prop1}.
		
%
In the next theorem we estimate $\|(fe_{\veps})(A)\|$. 
In Section 3, we show that this estimate is sharp.			
\begin{theorem}\label{thm1}
	Let $A\in\Sect(\omega)$, $0<\omega<\phi<\frac{\pi}{2}$ and $\veps,r_{0}>0$.
			 Further, let $f\in\Hinf(\Omega_{\phi,r_{0}})$ with $\Omega_{\phi,r_{0}}:=\Sigma_{\phi}\cup B_{r_{0}}(0)$. 
			 Then $(fe_{\veps})(A)$ is bounded and 
			 \begin{equation}\label{eq:loganalytic2}
		 	\|(fe_{\veps})(A)\| \leq M(A,\phi) \cdot b(\veps,r_{0},\phi)\cdot \|f\|_{\infty,\Omega_{\phi,r_{0}}},
			 \end{equation}
			 with
			 \begin{equation}\label{thm1:b}
		 	 b(\veps,r_{0},\phi)=\frac{1}{\pi}\cdot\left\{\begin{array}{ll}{\rm Ei}(\veps r_{0}\cos\phi)+e^{\veps r_{0}}(\pi-\phi),&2\veps r_{0}\leq1,\\{\rm Ei}\left(\tfrac{\cos\phi}{2}\right)+\sqrt{e}(\pi-\phi),&2\veps r_{0}>1.\end{array}\right.
\end{equation}
 Here, ${\rm Ei}(x)$ is the the exponential integral, see (\ref{eq:expint})--(\ref{eq:EiEST2}), therefore, 
			\begin{equation}\label{eq:loganalyticNICE}
				b(\veps,r_{0},\phi) \sim \left\{ \begin{array}{ll}|\log(\veps r_{0} \cos\phi)|,&  \veps r_{0}<\tfrac{1}{2},\\  |\log\tfrac{\cos\phi}{2}|,&\veps r_{0}\geq\tfrac{1}{2}.\end{array}\right.
			\end{equation}
\end{theorem}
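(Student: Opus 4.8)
The plan is to represent $(fe_{\veps})(A)$ by the Riesz--Dunford integral~(\ref{rieszdunford}) over the boundary contour of a truncated sector and to estimate the two rays and the circular arc separately. First I would check that $g:=fe_{\veps}$ falls under the second of the three situations listed after~(\ref{rieszdunford}): since $\phi<\tfrac{\pi}{2}$, on $\Sigma_{\phi}$ we have $|e^{-\veps z}|=e^{-\veps|z|\cos(\arg z)}\le e^{-\veps|z|\cos\phi}\le 1$, so $g\in\Hinf(\Sigma_{\phi})$ with $\|g\|_{\infty,\phi}\le\|f\|_{\infty,\Omega_{\phi,r_{0}}}$; moreover $|g(z)|\le\|f\|_{\infty}e^{-\veps|z|\cos\phi}$ decays faster than any power of $|z|$ at infinity, hence $g\in\Hinf_{(0)}(\Sigma_{\phi})$, and $g\in H(B_{r_{0}}(0))$ because $f$ is. Therefore $(fe_{\veps})(A)$ is bounded and, for any $\omega<\phi'<\phi$ and $0<r'<r_{0}$,
\begin{equation*}
(fe_{\veps})(A)=\frac{1}{2\pi i}\int_{\Gamma}f(z)e^{-\veps z}R(z,A)\,dz,\qquad \Gamma=\partial\bigl(B_{r'}(0)\cup\Sigma_{\phi'}\bigr),
\end{equation*}
where $\Gamma$ is made up of the two rays $\{te^{\pm i\phi'}:t\ge r'\}$ and the arc $\{r'e^{i\theta}:\phi'\le\theta\le2\pi-\phi'\}$.

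On $\Gamma$ I would bound $|f(z)|\le\|f\|_{\infty}$ and use the sectoriality estimate $\|R(z,A)\|\le M(A,\phi')/|z|$, valid at every point of $\Gamma$ since the bound defining $M(A,\phi')$ extends from $\C\setminus\overline{\Sigma_{\phi'}}$ to $\{\lambda:|\arg\lambda|\ge\phi'\}$ by continuity of $\lambda\mapsto\|\lambda R(\lambda,A)\|$. For the two rays, parametrising by $t=|z|$ and substituting $t=r'v$ turns $\int_{r'}^{\infty}e^{-\veps t\cos\phi'}\tfrac{dt}{t}$ into $\int_{1}^{\infty}e^{-\veps r'\cos\phi'\,v}\tfrac{dv}{v}={\rm Ei}(\veps r'\cos\phi')$ by the definition~(\ref{eq:expint}), so the rays together contribute at most $\tfrac{M(A,\phi')}{\pi}{\rm Ei}(\veps r'\cos\phi')\|f\|_{\infty}$; for the arc, $|e^{-\veps z}|=e^{-\veps r'\cos\theta}\le e^{\veps r'}$, $\|R(z,A)\|\le M(A,\phi')/r'$ and the angular length is $2\pi-2\phi'$, so the arc contributes at most $\tfrac{M(A,\phi')}{\pi}e^{\veps r'}(\pi-\phi')\|f\|_{\infty}$. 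Adding these and letting $\phi'\uparrow\phi$ (using continuity of $\delta\mapsto M(A,\delta)$ on $(\omega,\pi)$) gives, for every $r'\in(0,r_{0})$,
\begin{equation*}
\|(fe_{\veps})(A)\|\le\frac{M(A,\phi)}{\pi}\bigl({\rm Ei}(\veps r'\cos\phi)+e^{\veps r'}(\pi-\phi)\bigr)\|f\|_{\infty,\Omega_{\phi,r_{0}}}.
\end{equation*}

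The choice of $r'$ produces the two cases in~(\ref{thm1:b}): if $2\veps r_{0}\le1$ the factor $e^{\veps r'}$ stays bounded and one lets $r'\uparrow r_{0}$, using that ${\rm Ei}$ is continuous and decreasing, giving the first line; if $2\veps r_{0}>1$, then $r':=\tfrac{1}{2\veps}\in(0,r_{0})$ is admissible and substituting $\veps r'=\tfrac12$ gives $e^{\veps r'}=\sqrt{e}$ and ${\rm Ei}(\veps r'\cos\phi)={\rm Ei}(\tfrac{\cos\phi}{2})$, i.e.\ the second line. The equivalences~(\ref{eq:loganalyticNICE}) then follow from~(\ref{eq:expintEST})--(\ref{eq:EiEST2}): for $\veps r_{0}<\tfrac12$ we have $\veps r_{0}\cos\phi<\tfrac12$, hence ${\rm Ei}(\veps r_{0}\cos\phi)\sim|\log(\veps r_{0}\cos\phi)|$, while the bounded term $e^{\veps r_{0}}(\pi-\phi)\le\sqrt{e}\,\pi$ is dominated by $|\log(\veps r_{0}\cos\phi)|\ge\log 2$; the case $\veps r_{0}\ge\tfrac12$ is identical with $\tfrac{\cos\phi}{2}$ in place of $\veps r_{0}\cos\phi$. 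The only genuinely delicate point is the bookkeeping at the edge of the sector: $f$ is only assumed bounded and holomorphic on the \emph{open} set $\Omega_{\phi,r_{0}}$, which forces the contour angle to be taken strictly below $\phi$ and the constant $M(A,\phi)$ to be recovered by the limit $\phi'\uparrow\phi$; everything else is a routine contour estimate.
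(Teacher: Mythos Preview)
Your proof is correct and essentially identical to the paper's: the same truncated-sector contour $\partial(B_{r'}(0)\cup\Sigma_{\phi'})$, the same split into two rays (yielding the ${\rm Ei}$ term) plus the arc (yielding the $e^{\veps r'}(\pi-\phi')$ term), the same parameter choices $r'\uparrow r_{0}$ resp.\ $r'=\tfrac{1}{2\veps}$ for the two cases, and the same limit $\phi'\uparrow\phi$ at the end. You are in fact slightly more explicit than the paper about extending the resolvent bound to the rays $|\arg z|=\phi'$ and about why the limit in $\phi'$ works, but the argument is the same.
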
		 
\begin{proof}
	Since $fe_{\veps}\in\Hinf_{(0)}(\Sigma_{\phi})\cap \Hinf(\Omega_{\phi,r_{0}})$, $(fe_{\veps})(A)$ is a bounded operator defined by (\ref{rieszdunford}). Hence,
			\begin{equation}\label{thm1:eq1}
			\|(fe_{\veps})(A)\|=\frac{1}{2\pi} \left\|\int_{\Gamma_{r}} f(z)e^{-\veps z}R(z,A) \ dz\right\|\leq \frac{\|f\|_{\infty,\Omega_{\phi,r_{0}}}}{2\pi} \int_{\Gamma_{r}} \|e^{-\veps z}R(z,A)\|\ |dz|,
			\end{equation}
			where the integration path is chosen to be $\Gamma_{r}=\Gamma_{1,r}\cup\Gamma_{2,r}\cup\Gamma_{3,r}$ with			
			\begin{equation*}
			\Gamma_{1,r}=\left\{\tilde{r}e^{i\delta},\tilde{r}>r\right\},
			\Gamma_{2,r}=\left\{re^{i\psi},|\psi|\geq \delta\right\},
			\Gamma_{3,r}=\left\{\tilde{r}e^{-i\delta}, \tilde{r}>r\right\},\quad r\in(0,r_{0}), \delta\in(\omega,\phi),
			\end{equation*}
			orientated positively.
			The rest of the proof is similar to the argument that $\sup_{\veps>0}\|e_{\veps} (A)\|<\infty$ for sectorial operators with $\omega_{A}<\frac{\pi}{2}$, see e.g.\ \cite{EngelNagel,Pazy83,Vitse05}.
			Splitting up the integral, for $z\in\Gamma_{1,r}$,
			\begin{equation*}
			\|e^{-\veps z}R(z,A)\|\leq e^{-\veps \Re z}\cdot  \frac{M(A,\delta)}{|z|}=\frac{e^{-\veps|z|\cos\delta}}{|z|}M(A,\delta).
			\end{equation*}
			On $\Gamma_{3,r}$ the same estimate holds.
			For $z\in\Gamma_{2,r}$, 
		$		\|e^{-\veps z}R(z,A)\|\leq e^{\veps r} \cdot \frac{M(A,\delta)}{r}.$
			Therefore,
			\begin{align}
			\int_{\Gamma r}\|e^{-\veps z}R(z,A)\| |dz| \leq{}& M(A,\delta)\left(2\int_{r}^{\infty}\frac{e^{-\veps \tilde{r}\cos\delta}}{\tilde{r}}d\tilde{r} +\frac{e^{\veps r} }{r}\int_{\Gamma_{2,r}}  |dz|\right) \notag\\
			\leq{}&2M(A,\delta)({\rm Ei}(\veps r\cos\delta)+e^{\veps r}(\pi-\delta)).\label{thm1:eq2}
			\end{align}
			Next, for $n\in\N$, we choose $r$ as 
			\begin{equation*}		
	r=\left\{\begin{array}{ll}r_{n}=r_{0}(1-2^{-n}),& 2\veps r_{0}\leq1,\\\frac{1}{2\veps},&2\veps r_{0}>1.\end{array}\right.
	\end{equation*}
Clearly, $r$ lies within $(0,r_{0})$. Hence, by (\ref{thm1:eq1}) and
(\ref{thm1:eq2}),
\begin{equation*}
			\|(fe_{\veps})(A)\| \leq \frac{M(A,\delta)}{\pi} \left\{\begin{array}{ll}{\rm Ei}(\veps r_{n}\cos\delta)+e^{\veps r_{n}}(\pi-\delta),&2\veps r_{0}\leq1,\\{\rm Ei}\left(\tfrac{\cos\delta}{2}\right)+\sqrt{e}(\pi-\delta),&2\veps r_{0}>1\end{array}\right\}. 
			\end{equation*}
			Letting $n\to\infty$ and $\delta\to\phi^{-}$ shows the assertion.
\end{proof}
			By $\|f\|_{\infty,\Omega_{\frac{\pi}{2},r}} \leq \|e_{\veps}f\|_{\infty,\Omega_{\frac{\pi}{2},r}}$ for $f\in\Hinf(\Omega_{\frac{\pi}{2},r})$, the following consequence of Theorem \ref{thm1} holds.
			\begin{corollary}\label{corH+}
			For $A\in\Sect(\omega)$, $\omega<\frac{\pi}{2}$, $\veps,r>0$, the 
			$e_{\veps}H^{\infty}(\Cp\cup B_{r}(0))$-calculus is bounded.

			\end{corollary}
\subsection{The space $H^{\infty}[\veps,\sigma]$ and Vitse's result}
\label{sec:Vitse}%
In this subsection we show that the result in Theorem \ref{thm1}
generalizes Theorem 1.6 in \cite{Vitse05}.
\medskip

For $\veps,\sigma\in\R$ with $0\leq\veps<\sigma\leq\infty$, let
$H^{\infty}[\veps,\sigma]$ denote the space of functions which are in
$\Hinf(\Cp)$ and are the Laplace-Fourier transform of a distribution
supported in $[\veps,\sigma]$.  For $\sigma=\infty$, we get $H^{\infty}[\veps,\infty]=e^{-\veps z}\Hinf(\Cp)$. 
Recall that an entire function $g$ is
of \textit{(exponential) type $0<\sigma<\infty$} if for any $\epsilon>0$ there
exists $C_{\epsilon}>0$ such that $|g(z)|\leq
C_{\epsilon}e^{(\sigma+\epsilon)|z|}$ for all $z\in\C$.
\newline
For $\sigma<\infty$, the following Paley-Wiener-Schwartz type result
holds, see \cite[p.174]{HavinJoericke}.
\begin{equation}\label{eq:CharHepssigma}
	g\in H^{\infty}[\veps,\sigma]  \iff g\text{ is entire of exponential type }\sigma \text{ and } ge^{\veps\cdot}\in\Hinf(\Cp).
\end{equation}
For more details about $H^{\infty}[\veps,\sigma]$, we refer to \cite{Vitse05} and the references therein.\newline
The following is a consequence of the  Phragm\'en-Lindel\"of principle, see \cite[Thm.~6.2.4, p.82]{BoasEntireFunctions}.
\begin{lemma}\label{lemBoas}
Let $g:\C\rightarrow\C$ be entire of exponential type $\sigma$ such that $\|g\|_{\infty,i\R}<\infty$. Then, 
\begin{equation*}
	|g(x+iy)| \leq e^{\sigma|y|}\ \|g\|_{\infty,i\R}, \qquad \forall x,y\in\R.
\end{equation*}
\end{lemma}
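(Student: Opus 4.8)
The plan is to derive the bound from its classical form for entire functions of exponential type bounded on the \emph{real} axis --- which is exactly \cite[Theorem 6.2.4]{BoasEntireFunctions} --- by means of a rotation. Set $G(w):=g(iw)$. Then $G$ is entire, it is again of exponential type $\sigma$ (since $|G(w)|=|g(iw)|\le C_{\epsilon}e^{(\sigma+\epsilon)|w|}$ for every $\epsilon>0$), and $\sup_{u\in\R}|G(u)|=\sup_{u\in\R}|g(iu)|=\|g\|_{\infty,i\R}=:M$. Because $g(z)=G(-iz)$ and $\Im(-iz)=-\Re z$, a bound of the form $|G(u+iv)|\le Me^{\sigma|v|}$ for all $u,v\in\R$ translates immediately into the assertion of the lemma. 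So it suffices to prove this bound for a function $G$ of exponential type $\sigma$ with $\sup_{\R}|G|\le M$.

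For that I would treat the upper and the lower half-plane separately. In the closed upper half-plane put $H(w):=G(w)e^{i\sigma w}$, so that $|H(w)|=|G(w)|e^{-\sigma\Im w}$; then $H$ is entire of finite exponential type and $|H(u)|=|G(u)|\le M$ for $u\in\R$. The extra ingredient is that $H$ does not grow along the positive imaginary axis: since $G$ has exponential type $\sigma$, we have $|G(iv)|\le C_{\epsilon}e^{(\sigma+\epsilon)v}$ for $v\ge0$, hence, for every $\epsilon>0$, $|H(iv)|=|G(iv)|e^{-\sigma v}\le C_{\epsilon}e^{\epsilon v}$, so that $\limsup_{v\to\infty}v^{-1}\log|H(iv)|\le0$. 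The Phragm\'en--Lindel\"of principle for a half-plane then forces $|H|\le M$ on the whole closed upper half-plane, that is, $|G(w)|\le Me^{\sigma\Im w}$ for $\Im w\ge0$. Running the same argument with $G(w)e^{-i\sigma w}$ on the lower half-plane (or with $\overline{G(\overline{w})}$) gives $|G(w)|\le Me^{\sigma|\Im w|}$ for all $w$, which is what was needed.

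The one step that requires genuine care is the half-plane Phragm\'en--Lindel\"of estimate: $|H|$ may be very large close to $\R$ but far from the origin, so a plain maximum-modulus argument on growing half-discs is not enough; one really has to invoke the borderline ($\rho=1$) half-plane version of Phragm\'en--Lindel\"of, whose hypothesis is precisely the sub-exponential growth bound $\limsup_{v\to\infty}v^{-1}\log|H(iv)|\le0$ established above. This is the only point at which the full strength of the exponential-type assumption on $g$ (as opposed to mere boundedness on $i\R$) is used; everything else is routine bookkeeping with the rotation $w\mapsto iw$.
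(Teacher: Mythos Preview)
Your approach is correct and aligns with the paper's treatment: the lemma is not proved there but simply cited as \cite[Theorem 6.2.4]{BoasEntireFunctions}, which is exactly the real-axis version to which your rotation $G(w)=g(iw)$ reduces, and the Phragm\'en--Lindel\"of argument you sketch is the standard route to that theorem. One remark: your rotation actually yields $|g(x+iy)|\le e^{\sigma|x|}\,\|g\|_{\infty,i\R}$ (growth perpendicular to the axis on which $g$ is bounded), not $e^{\sigma|y|}$ as printed; the stated inequality carries a typo (take $g(z)=e^{\sigma z}$ to see it fails as written), but the corrected form is precisely what the subsequent application needs.
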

Using Lemma \ref{lemBoas}, Theorem \ref{thm1} yields an estimate in the $\Hinf(\Cp)$-norm.
\begin{theorem}\label{thm2}%
  Let $A\in\Sect(\omega)$, $\omega<\frac{\pi}{2}$ and
  $0<\veps<\sigma<\infty$.  With $b$ from \eqref{thm1:b}, the following holds.
\begin{equation}
\forall g\in H^{\infty}[\veps,\sigma]:\quad	\|g(A)\| \leq   \|g\|_{\infty,\Cp} \cdot \inf_{\phi\in(\omega,\frac{\pi}{2}),k\geq1}M(A,\phi)\ b\left(\veps,\tfrac{1}{k\sigma},\phi\right)  e^{\frac{\sigma-\veps}{k\sigma}}.
\end{equation}
\end{theorem}
\begin{proof}
  Let $f(z)=e^{\veps z} g(z)$. By (\ref{eq:CharHepssigma}), $f$ lies
  in $ \Hinf(\Cp)$ and is entire of type $\sigma-\veps$.
  Let $k\geq1$. Since $f$ is entire and bounded on $\Cp$, we can
  apply Theorem \ref{thm1} with $r_{0}=\frac{1}{k\sigma}$.  
	Thus, for $\phi\in(\omega,\tfrac{\pi}{2})$,
\begin{equation*}
	\|g(A)\| = \|(fe_{\veps})(A)\| \leq \inf_{\phi\in(\omega,\frac{\pi}{2})}M(A,\phi) \cdot b\left(\veps,\tfrac{1}{k\sigma},\phi\right)\cdot \|f\|_{\infty,\Omega_{\phi,\frac{1}{k\sigma}}},
\end{equation*}
where $\Omega_{\phi,\frac{1}{k\sigma}}=\Sigma_{\phi}\cup
B_{\frac{1}{k\sigma}}(0)$.
  Clearly, $\|f\|_{\infty,\Omega_{\phi,\frac{1}{k\sigma}}} \leq
\|f\|_{\infty,\Cp\cup B_{\frac{1}{k\sigma}}(0)}$.  Moreover, as $f$ is
entire of exponential type $\sigma-\veps$ and
$\sup_{y\in\R}|f(iy)|=\|f\|_{\infty,\Cp}$, we can apply Lemma \ref{lemBoas} to conclude that
				 \begin{equation*}
		 \|f\|_{\infty,\Omega_{\phi,\frac{1}{k\sigma}}}\leq  e^{\frac{\sigma-\veps}{k\sigma}} \|f\|_{\infty,\Cp}.
				 \end{equation*}
Since $\|g\|_{\infty,\Cp}=\|f\|_{\infty,\Cp}$, the assertion follows.
\end{proof}
	
Now we write Theorem \ref{thm2} in the terminology used in
\cite{Vitse05}.  
There, for $\theta\in(0,\pi]$, a densely defined closed
operator is called $\theta$-\textit{sectorial}, if $\sigma(A)$ is
contained in $\Sigma_{\theta}\cup\left\{0\right\}$ (note that in our
definition of $\Sect(\theta)$, $\sigma(A)$ is contained in
$\overline{\Sigma}_{\theta}$) and
\begin{equation*}
			 \tilde{M}(A,\theta)=\sup \left\{\ \|zR(z,A)\| : z\in\C\setminus(\Sigma_{\theta}\cup\left\{0\right\})\ \right\} <\infty.
\end{equation*} 
By $S(\theta)$ let us denote the $\theta$-sectorial operators on $X$.
As pointed out in \cite[Sec.~1.1]{Vitse05}, 
$S(\theta)\subset \Sect(\theta)\subset S(\theta+\epsilon)$ for all
$\epsilon>0$ and $S(\theta)=\bigcup_{0<\theta'<\theta}
\Sect(\theta')$.  Moreover, for $A\in S(\frac{\pi}{2})$ there exists $\theta<\frac{\pi}{2}$ such that
$A\in S(\theta)$, see Lemma \ref{lemVitse} below. 
 Hence, $A\in \Sect(\theta)$ for some
$\theta<\frac{\pi}{2}$ if and only if $A\in S(\frac{\pi}{2})$.  Furthermore, for $A\in
S(\theta)$ we have by continuity that
			 \begin{equation}
		 \tilde{M}(A,\theta)=\sup_{z\in\C\setminus(\Sigma_{\theta}\cup\left\{0\right\})} \|zR(z,A)\|=\sup_{z\in\C\setminus\overline{\Sigma_{\theta}}} \|zR(z,A)\|=M(A,\theta).
\end{equation}
The following is a well-known consequence of a Neumann series argument, e.g.\ \cite[Lem.\ 1.1]{Vitse05}.
\begin{lemma}\label{lemVitse}
Let $A\in S(\tfrac{\pi}{2})$ and $M=\tilde{M}(A,\tfrac{\pi}{2})$. Then, $A\in S(\theta)$ for 
		\begin{equation*}
			\theta=\arccos\frac{1}{2M}\in\theta\in\left(\tfrac{\pi}{3},\tfrac{\pi}{2}\right)\quad \text{and}\quad \tilde{M}(A,\theta)=M(A,\theta)\leq2 M.
\end{equation*}
\end{lemma}

\begin{theorem}\label{cor1}%
  Let $A\in S(\tfrac{\pi}{2})$ and $M=\tilde{M}(A,\frac{\pi}{2})$.
  Then, 
	 for $0<\veps<\sigma<\infty$ and $g\in H^{\infty}[\veps,\sigma]$,
	\begin{equation}\label{cor1:eq}
	\|g(A)\| \leq \left(C_{1} + C_{2} \log\left(\frac{\sigma}{\veps}\right)\right) \|g\|_{\infty,\Cp}\leq C_{3}\log\left(\frac{\sigma e}{\veps}\right) \|g\|_{\infty,\Cp},
	\end{equation}
with $C_{1} = c_{1}M+c_{2}M\log(M)$, $C_{2} = c_{2}M$ and $C_{3} =
c_{1}M+c_{2}M\log(M)$ and
	\begin{align*}
				 c_{1}=\tfrac{2e^{\frac{1}{5}}}{\pi}\left(\log(10)+\tfrac{2\pi}{3}\right)\approx 3.42,\ c_{2}=\tfrac{2e^{\frac{1}{5}}}{\pi}\approx0.78. 
\end{align*}
\end{theorem}	 
\begin{proof}
  \mbox{} Let $\theta$ be defined as in Lemma \ref{lemVitse}, hence, $\theta\in(\tfrac{\pi}{3},\tfrac{\pi}{2})$, $\cos\theta=\frac{1}{2M}$, and $M(A,\theta)\leq2 M$.
		Using Theorem \ref{thm2}, we get
		\begin{equation*}
		\|g(A)\| \leq \tfrac{2M}{\pi} \cdot \|g\|_{\infty,\Cp}\cdot \inf_{k\geq1}b(\veps,\tfrac{1}{k\sigma},\theta)e^{\frac{\sigma-\veps}{k\sigma}}.
		\end{equation*}
		It remains to estimate the infimum. For $k\geq2$, $\frac{\veps}{2Mk\sigma}<\frac{\veps}{k\sigma}<\frac{1}{2}$ and thus, by (\ref{thm1:b}) and (\ref{eq:EiEST2}), we get for  $b=b(\veps,\frac{1}{k\sigma},\theta)$ 
		  that
		\begin{align*}\label{eq4:cor1}
		b\cdot e^{\frac{\sigma-\veps}{k\sigma}}=\left[{\rm Ei}\left(\frac{\veps}{2Mk\sigma}\right)e^{\frac{\sigma-\veps}{k\sigma}}+e^{\frac{1}{k}}\frac{2\pi}{3}\right]\leq{}& \left[ \log\left(\frac{2Mk\sigma}{\veps}\right)e^{\frac{\sigma-\veps}{k\sigma}}+e^{\frac{1}{k}}\frac{2\pi}{3}\right].
		\end{align*} 
	Using $e^\frac{\sigma-\veps}{k\sigma}<e^{\frac{1}{k}}$,  the right-hand-side 
	can be further estimated,
		\begin{equation*}
		b\cdot e^{\frac{\sigma-\veps}{k\sigma}}\leq \left[ \log(M)+\log\left(\frac{\sigma}{\veps}\right)+\log(2k)+\frac{2\pi}{3}\right]\cdot e^{\frac{1}{k}}.
		\end{equation*}
		Setting $k=5$, we get the result. 
		 \end{proof}
\begin{remark}
\begin{enumerate}
\item In \cite[Lem.~1.2 and Thm.~1.6]{Vitse05}, Vitse derives
  similar estimates as in Theorem \ref{cor1}.  However, she
  uses the Hille-Phillips calculus and considers elements of
  $H^{\infty}[\veps,\sigma]$ that are Laplace transforms of
  $L^{1}(\veps,\sigma)$-functions first.  The approach moreover relies on
  estimates of derivatives of the (analytic) semigroup.  This results
  in a similar estimate as in (\ref{cor1:eq}),
  but with the following constants
$		 \tilde{C}_{1}=\frac{30}{\pi}M^{2},\quad \tilde{C}_{2}=\frac{16}{\pi}M^{3}, \quad \tilde{C}_{3}=\frac{30}{\pi}M^{3}.$
Thus, by our results, the $M$-dependence gets improved from $M^{3}$ to $M(1+\log M)$.
				\item
		 	We point out that Vitse uses an estimate for the semigroup, \cite[Lem.~1.2]{Vitse05} 
			to obtain an estimate for $H^{\infty}[\veps,\sigma]$ functions, 
			whereas our estimates all follow directly from Theorem \ref{thm2}.
			In other words, (the estimate for) the dependence on $M$ is the same for any 
			$H^{\infty}[\veps,\sigma]$ function, including $e_{\veps}$. In particular, \eqref{cor1:eq} implies that  
		$		\|e^{-\veps A}\| \leq   \frac{2M}{\pi}(\log(M)+6)$ for $\veps>0.$
			\item Possibly, $c_{1}$ and $c_{2}$ in Theorem \ref{cor1} can be further improved by optimizing $k$ in the proof. 
		\end{enumerate}
		 \end{remark}
		 \subsection{Invertible $A$ - exponentially stable semigroups}
		 In the view of Proposition \ref{prop3}, we now consider the case with $\mathcal{F}=\Hinf(\Sigma_{\phi})$ and invertible $A$.
		
		\begin{theorem}\label{thm:expstab}
			Let $A\in\Sect(\omega)$, $\omega<\phi<\pi/2$, and $0\in\rho(A)$. 
			Then, for $\veps>0$, $f\in\Hinf(\Sigma_{\phi})$ the operator $(fe_{\veps})(A)$ is bounded and for all $\kappa\in(0,1)$,
			\begin{equation}\label{eq11:expstab}
			\|(fe_{\veps})(A)\| \leq \frac{M(A,\phi)}{\pi} \cdot b_{\kappa}\left(\veps,\tfrac{1}{\|A^{-1}\|},\phi\right)\cdot \|f\|_{\infty,{\phi}}.	
			\end{equation}
			 Here, 
			\begin{equation}
			b_{\kappa}(\veps,R,\phi)={\rm Ei} \left(\veps \kappa R\cos\phi\right)+\frac{\kappa}{1-\kappa}e^{-\veps \kappa R\cos\phi}, 
			\end{equation}
			Hence, $b_{\kappa}(\veps,R,\phi)\sim C_{\kappa} |\log(\veps R\cos\phi)|$ for $\veps R\leq\frac{1}{2\kappa}$ and
			$b_{\kappa}(\veps,R,\phi)\sim  C_{\kappa} e^{-(\veps \kappa R\cos\phi)}$ for $\veps R\kappa>\frac{1}{2}$.
		\end{theorem}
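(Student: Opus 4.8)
The plan is to follow the proof of Theorem~\ref{thm1} closely, representing $(fe_\veps)(A)$ by a Riesz--Dunford integral along a contour tailored to the hypothesis $0\in\rho(A)$, and then estimating the integrand piecewise. Since $\veps>0$ and $\phi<\tfrac{\pi}{2}$ we have $|f(z)e^{-\veps z}|\le\|f\|_{\infty,\phi}\,e^{-\veps|z|\cos\phi}$ on $\Sigma_\phi$, so $fe_\veps$ decays (super-)polynomially at infinity, that is $fe_\veps\in\Hinf_{(0)}(\Sigma_\phi)$; together with $0\in\rho(A)$ this is the third situation listed after (\ref{rieszdunford}), so $(fe_\veps)(A)\in\Bo(X)$ and
\[
	(fe_\veps)(A)=\frac{1}{2\pi i}\int_{\Gamma}f(z)e^{-\veps z}R(z,A)\,dz.
\]
I would take $\Gamma=\partial\bigl(\Sigma_\delta\setminus B_r(0)\bigr)$, positively oriented, with $\delta\in(\omega,\phi)$ and $r=\kappa/\|A^{-1}\|$ for the fixed $\kappa\in(0,1)$; thus $\Gamma$ is the union of the two rays $\Gamma_\pm=\{\tilde r e^{\pm i\delta}:\tilde r\ge r\}$ and the short circular arc $\Gamma_{\mathrm{arc}}=\{re^{i\psi}:|\psi|\le\delta\}$. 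Since the spectral radius of $A^{-1}$ does not exceed $\|A^{-1}\|$, one has $\sigma(A)\subset\overline{\Sigma_\omega}\cap\{|z|\ge 1/\|A^{-1}\|\}$ and, by a Neumann series, $B_{1/\|A^{-1}\|}(0)\subset\rho(A)$; hence $\Gamma$ encloses $\sigma(A)$ while lying in $\rho(A)$, and $\Gamma\subset\overline{\Sigma_\delta}\subset\Sigma_\phi$, so both $f$ and the bound $|f(z)e^{-\veps z}|\le\|f\|_{\infty,\phi}$ are available on $\Gamma$. The key difference from Theorem~\ref{thm1} is that $f$ is only defined on $\Sigma_\phi$, so one cannot run the contour around $0$ along a long arc; confining $\Gamma$ to the proper subsector $\overline{\Sigma_\delta}$ of $\Cp$ is precisely what will make the bound decay as $\veps\to\infty$.

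Next I would estimate $\|e^{-\veps z}R(z,A)\|$ on the two pieces of $\Gamma$. On each ray $\|R(z,A)\|\le M(A,\delta)/|z|$ (valid on $\partial\Sigma_\delta$ by continuity of the resolvent, as in Theorem~\ref{thm1}) and $|e^{-\veps z}|=e^{-\veps|z|\cos\delta}$, so the substitution $\tilde r\mapsto\tilde r/r$ gives
\[
	\frac{1}{2\pi}\int_{\Gamma_+\cup\Gamma_-}\|e^{-\veps z}R(z,A)\|\,|dz|\le\frac{M(A,\delta)}{\pi}\int_{r}^{\infty}\frac{e^{-\veps\tilde r\cos\delta}}{\tilde r}\,d\tilde r=\frac{M(A,\delta)}{\pi}\,{\rm Ei}(\veps r\cos\delta).
\]
On $\Gamma_{\mathrm{arc}}$ the sectorial estimate is useless near $0$; instead, since $|z|=r<1/\|A^{-1}\|$, writing $z-A=-A(I-zA^{-1})$ and summing the Neumann series yields $\|R(z,A)\|\le\|A^{-1}\|/(1-\kappa)$, while $\Re z=r\cos\psi\ge r\cos\delta$ gives $|e^{-\veps z}|\le e^{-\veps r\cos\delta}$; with $\int_{\Gamma_{\mathrm{arc}}}|dz|=2\delta r$ this bounds the arc contribution by $\tfrac{\delta\kappa}{\pi(1-\kappa)}e^{-\veps r\cos\delta}$, a term of the order of the $\tfrac{\kappa}{1-\kappa}e^{-\veps r\cos\delta}$ appearing in $b_\kappa$. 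Adding the two pieces, recalling $r=\kappa/\|A^{-1}\|$, and letting $\delta\to\phi^-$ (so that $M(A,\delta)\to M(A,\phi)$ and $\cos\delta\to\cos\phi$) produces (\ref{eq11:expstab}); note that $\kappa\in(0,1)$ stays free throughout. The asymptotics then follow from (\ref{eq:expintEST})--(\ref{eq:EiEST2}): ${\rm Ei}(x)\sim|\log x|$ for $x<\tfrac12$ gives the logarithmic growth as $\veps\to0^+$, while ${\rm Ei}(x)<e^{-x}\log(1+\tfrac1x)$ together with the explicit factor $e^{-\veps\kappa\cos\phi/\|A^{-1}\|}$ gives exponential decay as $\veps\to\infty$.

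The main obstacle is the resolvent estimate near the origin. The sectorial bound $M(A,\delta)/|z|$ integrates to $+\infty$ at $z=0$ (this is why ${\rm Ei}$, and not $\log$, enters), so one must cut the contour at a positive radius and genuinely use $0\in\rho(A)$, replacing $M(A,\delta)/|z|$ by the Neumann bound $\|A^{-1}\|/(1-\kappa)$ on the ball of radius $\kappa/\|A^{-1}\|$. A more routine point, handled exactly as in the discussion around (\ref{rieszdunford}) and in Theorem~\ref{thm1}, is to verify that $\Gamma$ is admissible for the calculus --- that it encircles $\sigma(A)$, stays off $\sigma(A)$, and lies inside $\Sigma_\phi$ --- and that the value of the integral does not depend on the auxiliary parameters $\delta$ and $\kappa$, by Cauchy's theorem.
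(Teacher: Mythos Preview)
Your approach is essentially the same as the paper's: represent $(fe_\veps)(A)$ by a Riesz--Dunford integral, deform the contour using $B_{1/\|A^{-1}\|}(0)\subset\rho(A)$, estimate the two rays via sectoriality (yielding the ${\rm Ei}$ term) and the short piece near $0$ via the Neumann bound $\|R(z,A)\|\le\|A^{-1}\|/(1-\kappa)$, then let the auxiliary angle tend to $\phi$. The only difference is the shape of the short piece: the paper uses the \emph{vertical chord} $\Gamma_2=\{re^{i\theta}-it:0\le t\le 2r\sin\theta\}$ rather than your circular arc. This matters only for the precise constant: the chord has length $2r\sin\theta$ and $\Re z\equiv r\cos\theta$ on it, giving the factor $\sin\theta\le 1\le M(A,\theta)$ and hence exactly the stated bound; your arc has length $2\delta r$, producing the factor $\delta$ in place of $\sin\theta$, and $\delta\le M(A,\delta)$ can fail (take $A=I$ and $\delta$ close to $\pi/2$, where $M(I,\delta)=1/\sin\delta$ is close to $1$). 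So your argument yields a bound of the same form but with $\max(M(A,\phi),\phi)$ in place of $M(A,\phi)$, which is equivalent up to the harmless factor $\pi/2$ but does not literally give (\ref{eq11:expstab}); replacing the arc by the chord fixes this immediately.
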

		
		\begin{proof}
			Since $0\in\rho(A)$ and $fe_{\veps}\in \Hinf_{(0)}(\Sigma_{\phi})$, 
			$(fe_{\veps})(A)$ is well-defined by (\ref{rieszdunford}), 
			\begin{equation*}
		(fe_{\veps})(A)=\frac{1}{2\pi i}\int_{\partial \Sigma_{\theta}} f(z)e^{-\veps z} R(z,A) \ dz,\quad  \theta\in(\omega,\phi).
			\end{equation*}
			Since $0\in\rho(A)$, we have that the ball $B_{\frac{1}{\|A^{-1}\|}}(0)$ lies in $\rho(A)$. For $\kappa\in(0,1)$ set $r=\frac{\kappa}{\|A^{-1}\|}$. By Cauchy's theorem, we can replace the integration path $\partial\Sigma_{\theta}$ by $\Gamma=\Gamma_{1}\cup\Gamma_{2}\cup\Gamma_{3}$ with

\begin{equation*}
			\Gamma_{1} =\left\{se^{i\theta}, s\geq r\right\},
			\Gamma_{2}=\left\{re^{i\theta}-it, t\in(0,2\Im(re^{i\theta}))\right\},
			\Gamma_{3}=\left\{-se^{-i\theta}, s\leq -r\right\}.
			\end{equation*}			 
			Thus,
				\begin{equation}\label{eq2:thmexpstab}
		 \|(fe_{\veps})(A)\| \leq \frac{\|f\|_{\infty,\phi}}{2\pi} \int_{\Gamma} e^{-\veps\Re z} \|R(z,A)\|\ |dz|.
				\end{equation}
			By the resolvent identity, $\|R(z,A)\|\leq \frac{\|A^{-1}\|}{1-|z| \|A^{-1}\|}$, and thus, for $\kappa\in(0,1)$,
				\begin{equation*}
		\|R(z,A)\|\leq \frac{\|A^{-1}\|}{1-\kappa} \quad\text{for}\quad |z|\leq r=\frac{\kappa}{\|A^{-1}\|}.
				\end{equation*}
			This yields, since $\Gamma_{2}\subset B_{r}(0)$,
			\begin{align*}
		\int_{\Gamma} e^{-\veps\Re z} \|R(z,A)\|\ |dz|\leq {}&\tfrac{\|A^{-1}\|}{1-\kappa}\int_{\Gamma_{2}} e^{-\veps r\cos\theta} \ dt + 2M(A,\theta)\int_{r}^{\infty} \frac{e^{-\veps s \cos\theta}}{s}\ ds\\
		={}&\tfrac{2\|A^{-1}\|}{1-\kappa}\ r\sin\theta\ e^{-\veps r\cos\theta}+2M(A,\theta) {\rm Ei}(\veps r \cos\theta),\\
		\leq {}&2M(A,\theta)\left(\tfrac{\kappa}{1-\kappa}e^{-\veps r\cos\theta}+ {\rm Ei}(\veps r \cos\theta)\right),
		\end{align*}
		as $M(A,\theta)\geq1$, see e.g. \cite[Prop.~2.1.1]{haasesectorial}. Letting $\theta\to\phi^{-}$ yields the assertion.
		\end{proof}
                Applying Theorem \ref{thm:expstab} to $f\equiv1$ shows
                that $\|e_{\veps}(A)\|$ decays exponentially for
                $\veps\to\infty$.  This behavior is natural as the
                condition that $0\in\rho(A)$ implies that the analytic
                semigroup is exponentially stable.  However, for
                $\veps\to0$, the theorem gives no bound for the norm.
                This can be derived by Theorem \ref{thm1} as we will
                see in the following result.
       \begin{corollary}\label{cor:expstab}
Let $A\in\Sect(\omega)$ and $\omega<\phi<\frac{\pi}{2}$. If $A$ is invertible,
then we define $R=\frac{1}{\|A^{-1}\|}$, otherwise we set $R$ to be zero.  
Then, for any $\kappa\in[0,1)$, there exists a $C>0$ such that
\begin{equation}\label{eq:expdecaySG}
		\|e_{\veps}(A)\| \leq C e^{-\veps \kappa R\cos\phi},\quad \veps>0,
		\end{equation}
		with $C\leq C_{\kappa}M(A,\phi){\rm Ei}(\cos\phi)$.
		\end{corollary}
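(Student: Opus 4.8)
The plan is to obtain (\ref{eq:expdecaySG}) by feeding $f\equiv 1$ into the two theorems just proved: Theorem \ref{thm1} handles small $\veps$, where only a uniform bound is asked for, and Theorem \ref{thm:expstab} supplies the exponential decay for large $\veps$ in the invertible case.

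First I would record two estimates valid for every $\veps>0$. Applying Theorem \ref{thm1} with $f\equiv 1$ (which, being entire and of modulus $1$, lies in every $\Hinf(\Omega_{\phi,r_0})$ with unit norm) and the \emph{free} choice $r_0=\tfrac1{2\veps}$ — so that $2\veps r_0=1$ and the two branches of (\ref{thm1:b}) coincide — gives the $\veps$-independent bound
\[
\|e_\veps(A)\|\le \frac{M(A,\phi)}{\pi}\Big({\rm Ei}\big(\tfrac{\cos\phi}{2}\big)+\sqrt{e}\,(\pi-\phi)\Big)=:K .
\]
If moreover $A$ is invertible, so that $R=1/\|A^{-1}\|>0$, then for any $\kappa\in(0,1)$ Theorem \ref{thm:expstab} with $f\equiv 1$ yields
\[
\|e_\veps(A)\|\le \frac{M(A,\phi)}{\pi}\Big({\rm Ei}(\veps\kappa R\cos\phi)+\tfrac{\kappa}{1-\kappa}e^{-\veps\kappa R\cos\phi}\Big).
\]

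If $A$ is not invertible then $R=0$, the claim reduces to $\|e_\veps(A)\|\le C$, and the first estimate already delivers it with $C=K$; the same is true if $A$ is invertible and $\kappa=0$. For invertible $A$ and $\kappa\in(0,1)$ I would split on the size of $x:=\veps\kappa R\cos\phi$. When $x\le\tfrac12$ one has $e^{x}\le e^{1/2}$, so the uniform bound gives $\|e_\veps(A)\|\le K= Ke^{x}e^{-x}\le Ke^{1/2}e^{-x}$. When $x>\tfrac12$ the right-hand inequality in (\ref{eq:expintEST}) gives ${\rm Ei}(x)<e^{-x}\log(1+\tfrac1x)<e^{-x}\log 3$, whence the second estimate gives $\|e_\veps(A)\|\le\frac{M(A,\phi)}{\pi}\big(\log 3+\tfrac{\kappa}{1-\kappa}\big)e^{-x}$. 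Taking $C$ to be the larger of the two prefactors proves (\ref{eq:expdecaySG}).

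Finally I would put $C$ into the stated form $C\le C_\kappa M(A,\phi){\rm Ei}(\cos\phi)$ by a routine comparison: combining the upper and lower bounds in (\ref{eq:expintEST}) yields ${\rm Ei}(\tfrac{\cos\phi}{2})<2e^{\cos\phi/2}{\rm Ei}(\cos\phi)\le 2\sqrt{e}\,{\rm Ei}(\cos\phi)$, and since ${\rm Ei}$ is decreasing we have ${\rm Ei}(\cos\phi)\ge{\rm Ei}(1)>0$, so the remaining bounded quantities $\sqrt{e}(\pi-\phi)$, $\log 3$, $\tfrac{\kappa}{1-\kappa}$ and the factor $e^{1/2}$ are each dominated by a $\kappa$-dependent multiple of ${\rm Ei}(\cos\phi)$. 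I do not anticipate a genuine obstacle here; the only point deserving care is the scaling $r_0=\tfrac1{2\veps}$ in the application of Theorem \ref{thm1}, which is exactly what renders that bound uniform in $\veps$ — any fixed $r_0$ would blow up as $\veps\to0^+$ through the term ${\rm Ei}(\veps r_0\cos\phi)$.
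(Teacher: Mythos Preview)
Your proposal is correct and follows essentially the same approach as the paper: apply Theorem~\ref{thm1} with $f\equiv1$ and $r_0$ proportional to $1/\veps$ for a uniform bound, apply Theorem~\ref{thm:expstab} with $f\equiv1$ for the exponential decay, and patch the two estimates together via the inequalities (\ref{eq:expintEST}) for ${\rm Ei}$. The only differences are cosmetic---the paper takes $r_0=1/\veps$ and splits on $\veps\kappa R\lessgtr1$, whereas you take $r_0=1/(2\veps)$ and split on $\veps\kappa R\cos\phi\lessgtr\tfrac12$---and your explicit treatment of the cases $R=0$ and $\kappa=0$ is a small expository plus.
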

		\begin{proof}
		Let $f\equiv1$. If $\veps\kappa R>1$, by (\ref{eq:expintEST}),
		\begin{equation*}
		{\rm Ei}(\veps\kappa R\cos\phi)<e^{-\veps\kappa R\cos\phi}\log\left(1+\tfrac{1}{\cos\phi}\right)<2e^{2}e^{-\veps\kappa R\cos\phi}{\rm Ei}(\cos\phi),
		\end{equation*} 
		where we used that ${\rm Ei}(2\cos\phi)<\rm Ei(\cos\phi)$ in the last inequality.
		Using this, Theorem \ref{thm:expstab} yields
		\begin{equation}\label{eq:proofexpdecay}
		 \|e_{\veps}(A)\| \leq \tilde{C}_{\kappa}M(A,\phi){\rm Ei}(\cos\phi)e^{-\veps\kappa R\cos\phi},  \quad \veps\kappa R>1,
		\end{equation}
		where $\tilde{C}_{\kappa}>0$ only depends on $\kappa$.\newline
		Now, let $\veps \kappa R\leq 1$.  
		We apply Theorem \ref{thm1} with $r_{0}=\frac{1}{\veps}$. 
		It implies that there exists an absolute constant $C_{2}$ such that $\|e_{\veps}(A)\|\leq C_{2} M(A,\phi) {\rm Ei}(\cos\phi)$. 
		Together with (\ref{eq:proofexpdecay}) the assertion follows.
		\end{proof}
		
                Let us point out that the corollary is interesting in
                terms of the dependence on the constants $M(A,\phi)$,
                $\|A^{-1}\|$ and $\phi$, whereas the exponential decay
                is clear by $0\in\rho(A)$. 
                Further note that the use of the scaling variable
                $\kappa$ is not so artificial as it might seem: By
                $B_{\|A^{-1}\|^{-1}}(0)\subset\rho(A)$, we have
                that the growth bound $\omega_{0}$ of the semigroup
                satisfies
                $\omega_{0}\leq-\frac{\cos\phi}{\|A^{-1}\|}$.  It is
                well-known that, even in the case of a
                \textit{spectrum-determined} growth bound, as we have
                it for analytic semigroups, this rate need not be
                attained, see e.g.\ \cite[Ex.~I.5.7]{EngelNagel}. The $\kappa$ encodes that we can
                achieve any exponential decay of rate
                $\tilde{\omega}\in(-\frac{\cos\phi}{\|A^{-1}\|},0]$.
        
               A version of the following result can already be found in \cite[Thm.~II.6.13]{Pazy83}, but the constant dependence is unclear there. 
               
 \begin{lemma}\label{le:fracT(t)growth}
Let $A\in\Sect(\omega)$ with $\omega<\phi<\frac{\pi}{2}$ and $\alpha\in(0,1]$. Set $R=\frac{1}{\|A^{-1}\|}\geq0$ (see Cor.~\ref{cor:expstab}). Then, for every $\kappa\in[0,1)$ there exists $C=C_{\alpha,\kappa} M(A,\phi) (\cos\phi)^{-\alpha}>0$ such that 
\begin{equation}\label{eq:fracgrowth2}
	  \|A^{\alpha}T(t)\| \leq C t^{-\alpha}e^{-t\kappa R\cos\phi } \qquad\forall t>0.
	\end{equation}
\end{lemma}
\begin{proof}It is easy to see that $A^{\alpha}T(t)$ is defined by \eqref{rieszdunford} with the same integration path $\Gamma$ as in Theorem \ref{thm:expstab} (with $\Gamma=\partial\Sigma_{\phi}$ for $R=0$). 
The estimate follows similarly as in Theorem \ref{thm:expstab}.
\end{proof}

\section{Sharpness of the result}
\subsection{Diagonal operators on Schauder bases (Schauder multiplier)}\label{sec:diagop}

 A typical construction of an unbounded calculus goes back to Baillon and Clement \cite{BaillonClement91} and has been used extensively since then, see \cite{Fackler14Reg} and the references therein. The situation is as follows.
Let $\left\{{\Phi}_{n}\right\}_{n\in\N}$ be a Schauder basis of the Banach space $X$. For the sequence $\mu=(\mu_{n})_{n\in\N}$ define the multiplication operator $\mathcal{M}_{\mu}$ by its action on the basis, i.e. $\mathcal{M}_{\mu}\Phi_{n}=\mu_{n}\Phi_{n}$, $n\in\N$, with maximal domain. The choice $\lambda_{n}=c^{n}$, $c>1$, yields a sectorial operator $A=\mathcal{M}_{\lambda}\in\Sect(0)$ with $0\in\rho(\mathcal{M}_{\lambda})$, and  for $f\in\Hinf(\Cp)$,
 	\begin{equation}\label{eq:multicalc}
 		\begin{array}{l}f(A)=f(\mathcal{M}_{\lambda})=\mathcal{M}_{f(\lambda)},\\ 
		D(\mathcal{M}_{f(\lambda)})=\left\{x=\sum_{n\in\N}x_{n}\Phi_{n}\in X: \sum_{n\in\N} f(\lambda_{n})x_{n}\Phi_{n} \text{ converges}\right\}.\end{array}
	 \end{equation}
		Because of (\ref{eq:multicalc}), a way of constructing unbounded calculi consists of the following two steps:
\begin{enumerate}[label=(\Alph*)]
	\item\label{uCalc1} Find a sequence $\mu\in\ell^{\infty}(\N,\C)$ such that $\mathcal{M}_{\mu}\notin\Bo(X)$. 
	\item\label{uCalc2} Find $f\in\Hinf(\Cp)$ such that $f(\lambda_{n})=\mu_{n}$ for all $n\in\N$.
\end{enumerate}
Since $\left(\lambda_{n}\right)$ is interpolating, \cite{Garnett}, \ref{uCalc2} is always possible. Note that \ref{uCalc1} follows if we can 
\begin{equation}\label{eq:conditionality}
\text{ find }x\in X \text{ such that } x=\sum\nolimits_{n\in\N} x_{n}\Phi_{n} \text{ does not converge unconditionally}.
\end{equation}
In fact, then there exists a sequence $\mu_{n}\subset\left\{-1,1\right\}$ such that  $\sum_{n\in\N} \mu_{n}x_{n}\Phi_{n}$ does not converge. Thus, $x\notin D(\mathcal{M}_{\mu})$, and so $\mathcal{M}_{\mu}\notin\Bo(X)$. \newline
Conversely, this indicates that \textit{a bounded $\Hinf$-calculus implies a large amount of unconditionality}, \cite[p.124]{haasesectorial}, which can be made rigorous, see \cite[Sec.~5.6]{haasesectorial} and \cite{KunstmannWeis04}. 
For more information about unbounded $\Hinf$-calculi via diagonal operators, see \cite[Chapter 9]{haasesectorial} and  \cite{Fackler14Reg}.

Let $\left\{\Phi_{n}\right\}_{n\in I}$, $I\subset \N$, be a Schauder basis of a Banach space $X$. 
For finite $\sigma\subset I$, $P_\sigma$ denotes the projection onto the linear span of $\left\{\Phi_{n}\right\}_{n\in\sigma}$.
Let us introduce the following constants,
\begin{equation}\label{def:basisconstants}
	m_{\Phi}=\sup_{n\in I}\|P_{\left\{n\right\}}\|,\quad \kappa_{\Phi}=\sup_{k\leq\ell}\left\|P_{[k,\ell]\cap I}\right\|,\quad ub_{\Phi}=\sup_{\sigma\subset I,|\sigma|<\infty}\|P_{\sigma}\|.
\end{equation}
  $\kappa_\phi$ is called the \textit{basis constant} and $ub_{\Phi}$ the \textit{uniform basis constant} of $\left\{\Phi_{n}\right\}_{n\in I}$. Clearly,
 \begin{equation}
 	m_{\Phi} \leq \kappa_{\Phi} \leq ub_{\Phi}.
 \end{equation}

\pagebreak[2]
\begin{theorem}\label{thm:GenMult}
There exist $K_{0}, K_{1}>0$ such that the following holds.
	Let $\left\{\Phi_{n}\right\}_{n\in\N}$ be a Schauder basis on a Banach space $X$ with $\kappa_{\Phi}<\infty$
	and let $\lambda_{n}=c^n$, $n\in\N$ for $c>1$. 
	Then $A:=\mathcal{M}_\lambda\in\Sect(0)$ and
	\begin{enumerate}[label={(\roman*)}]
	\item\label{thm:GenMultit1} $M(A,\psi)\leq \kappa_{\Phi} M(\psi)$ for all $\psi\in(0,\pi]$, where $M(\psi)$ only depends on $\psi$.
	\item\label{thm:GenMultit2} $0\in\rho(A)$ and ${\rm dist}(\sigma(A),0)=c$.
	\item\label{thm:GenMultit3} 

	For $\veps>0$, let  
	\begin{equation}\label{thm:MultGeneq2}\def\arraystretch{1.3}
	N_{\veps}=\left\lfloor \tfrac{2{\rm Ei}(\veps)}{\log c}\right\rfloor \quad \text{and}\quad	k(\veps)=\left\{\begin{array}{ll}K_{0}&\veps\leq(\sqrt{c}-1)^{-1},\\ c\veps&\veps>(\sqrt{c}-1)^{-1}.\end{array}\right. 
	\end{equation}
	Then, for all $f\in\Hinf(\Sigma_{\psi})$, $\psi\in(0,\frac{\pi}{2})$,
	\begin{equation}\label{eq:MultGeneral}
			\|(fe_{\veps})(A)\| \leq   \left( \pi\: ub_{\left\{\Phi_{n}\right\}_{n=1}^{N_{\veps}}}+m_{\Phi}e^{-k(\veps)}\left(\tfrac{K_{1}}{\log c}+1\right)\right)\|f\|_{\infty,\psi}.
		\end{equation}
		
	\end{enumerate}
	\end{theorem}
	\begin{proof}
	 By \cite[Lem.~9.1.2 and its proof]{haasesectorial}, $A\in\Sect(0)$ with $M(A,\phi)\leq\kappa_{\Phi}M(\psi)$, where $M(\psi)$ only depends on $\psi\in(0,\pi]$. 
	 Clearly, $\sigma(A)\subset[\lambda_{1},\infty)$. This shows \ref{thm:GenMultit1} and \ref{thm:GenMultit2}. 
	 \smallskip
	 
	 To show \ref{thm:GenMultit3}, note that for $N_{\veps}=\lfloor \frac{2{\rm Ei}(\veps)}{\log c}\rfloor$,
	\begin{equation*}
		h(\veps):=c^{N_{\veps}+1}\veps\geq c^{\frac{2{\rm Ei}(\veps)}{\log c}}\veps =e^{2{\rm Ei}(\veps)}\veps\stackrel{(\ref{eq:expintEST})}{\geq} \left(1+\tfrac{1}{\veps}\right)^{e^{-\veps}}\veps>K_{0},
	\end{equation*}
	for some constant $K_{0}\in(0,1)$ and all $\veps>0$.  
	If $N_{\veps}=0$, which means that $2{\rm Ei}(\veps)<\log c$,  then $h(\veps)=c\veps$. 
	Using (\ref{eq:expintEST}), it is easy to see that  $2\:{\rm Ei}\left(\veps_{c}\right)<\log c$ for $\veps_{c}=(\sqrt{c}-1)^{-1}$ and $c\veps_{c}>1>K_{0}$.
	Since $\rm Ei$ is decreasing on $(0,\infty)$, this yields that $h(\veps)\geq k(\veps)$, with $k(\veps)$ defined in (\ref{thm:MultGeneq2}).
	Now, 
	\begin{align*}
		\left\|\:\sum_{n\in\N}f(\lambda_{n})e^{-c^{n}\veps}P_{\left\{n\right\}}\:\right\| \leq{}& \left\|\:\sum_{n=1}^{N_{\veps}}f(c^{n})e^{-c^{n}\veps}P_{\left\{n\right\}}\:\right\| + \left\|\:\sum_{n=N_{\veps}+1}^{\infty}f(c^{n})e^{-c^{n}\veps}P_{\left\{n\right\}}\:\right\|  \\
		\leq{}& \pi\:ub_{\left\{\Phi_{n}\right\}_{n=1}^{N_{\veps}}} \: \left\|fe_{\veps}\right\|_{\infty} + \sum_{\ell=0}^{\infty}\left|f\left(c^{\ell+N_{\veps}+1}\right)e^{-h(\veps)c^{\ell}} \right| \left\|\:P_{\left\{\ell+N_{\veps}+1\right\}}\:\right\|\\
		\leq{}& \pi\: ub_{\left\{\Phi_{n}\right\}_{n=1}^{N_{\veps}}} \: \|f\|_{\infty} +  m_{\Phi}\: \|f\|_{\infty}\: \:\sum_{\ell=0}^{\infty}e^{-k(\veps)c^{\ell}},
	\end{align*}
	where we used the estimate 
	$\left\|\:\sum\nolimits_{n=1}^{N_{\veps}}\lambda_{n}P_{\left\{n\right\}}\:\right\| \leq \pi\:ub_{\left\{\Phi_{n}\right\}_{n=1}^{N_{\veps}}}  \max_{n=1,..,N_{\veps}} |\lambda_{n}|,$ for $\lambda_{n}\in\C$,
	see \cite[Lem.~2.9.1]{Nikolski14}. \newline
	It remains to estimate the sum. By Lemma \ref{le:growthlemma}\:\ref{itGL2},
	\begin{equation*}
		\sum_{\ell=0}^{\infty}e^{-k(\veps)c^{\ell}}\leq e^{-k(\veps)}+\frac{{\rm Ei}(k(\veps))}{\log c}\stackrel{(\ref{eq:expintEST})}{\leq}e^{-k(\veps)}\left(1+\frac{\log\left(1+\tfrac{1}{k(\veps)}\right)}{\log c}\right).
	\end{equation*}
	Since $k(\veps)\geq K_{0}$, we can bound $\log\left(1+\frac{1}{k(\veps)}\right)$ by $K_{1}=\log\left(1+\frac{1}{K_{0}}\right)$.
	\end{proof}
	On the the right hand side of (\ref{eq:MultGeneral}), the $\veps$-dependence for small $\veps$ appears only in the term $ub_{\left\{\Phi_{n}\right\}_{n=1}^{N_{\veps}}}$. 
	The following result shows that this indeed exhibits a logarithmic behavior for $\veps\to0$, which confirms the result from Theorem \ref{thm:expstab}, but shows even more, as we will see in Remark \ref{remcor4}.\ref{remcor41}.
	We also show that on Hilbert spaces the behavior is slightly better. 	
	
	\begin{theorem}\label{thm32}
	Let $\left\{\Phi_{n}\right\}_{n\in\N}$, $X$, $c$, $A$ be as in Theorem \ref{thm:GenMult}. Then, the following assertions hold for all $\psi\in(0,\pi)$, $f\in\Hinf(\Sigma_{\psi})$, $\veps>0$. 
	If $X$ is a Banach space, then
		\begin{equation}\label{thm32:eq1}
			\|(fe_{\veps})(A)\| \leq  \left(\frac{K_{2}}{\log c}+1\right) \cdot m_{\Phi}\cdot {\rm Ei}(\veps)\cdot\|f\|_{\infty,\psi}.
		\end{equation}
	If $X$ is a Hilbert space, then 
		\begin{equation}\label{thm32:eq2}
			\|(fe_{\veps})(A)\| \leq  \left(\frac{K_{3}}{\log c}+1\right) \cdot m_{\Phi}\cdot {\rm Ei}(\veps)^{1-\frac{0.32}{\kappa_{\Phi}^{2}}} \cdot \|f\|_{\infty,\psi}.
		\end{equation}
	Here,  $K_{2 }$ and $K_{3}$ are absolute constants.
	\end{theorem}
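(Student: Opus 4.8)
The plan is to re-run the estimate from the proof of Theorem \ref{thm:GenMult}, but (a) keeping the full quantity $h(\veps)=c^{N_\veps+1}\veps$ in the tail of the series rather than replacing it by $k_\veps$, and (b) estimating the uniform basis constant $ub_{\{\phi_n\}_{n=1}^{N_\veps}}$ of the first $N_\veps=\lfloor 2{\rm Ei}(\veps)/\log c\rfloor$ vectors more carefully. Writing $(fe_\veps)(A)=\mathcal{M}_{f(\lambda)e_\veps(\lambda)}$ and splitting the defining series at index $N_\veps$, exactly as in Theorem \ref{thm:GenMult}, one bounds a ``head'' by $\pi\, ub_{\{\phi_n\}_{n=1}^{N_\veps}}\|f\|_{\infty,\psi}$ (via \cite[Lemma 2.9.1]{Nikolski14}) and a ``tail'' by $m_\phi\|f\|_{\infty,\psi}\sum_{k\ge0}e^{-h(\veps)c^k}$.

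For the tail I would apply the growth lemma $\sum_{k\ge0}e^{-h c^k}\le e^{-h}+{\rm Ei}(h)/\log c$ used already in Theorem \ref{thm:GenMult}, together with the elementary facts $h(\veps)\ge c\veps\ge\veps$ and $h(\veps)\ge K_0$ for an absolute $K_0>0$ (both from $N_\veps\ge0$ and (\ref{eq:expintEST})). Thus ${\rm Ei}(h(\veps))\le{\rm Ei}(\veps)$ and $e^{-h(\veps)}\le e^{-c\veps}$; a short computation with (\ref{eq:expintEST})--(\ref{eq:EiEST2}) (optimise $\veps\mapsto\veps e^{-(c-1)\veps}$ and use ${\rm Ei}(\veps)\ge1$ for small $\veps$) gives $e^{-c\veps}\le\bigl(\tfrac{C}{\log c}+1\bigr){\rm Ei}(\veps)$ with an absolute $C$, so the tail is $\le\bigl(\tfrac{C'}{\log c}+1\bigr)m_\phi\,{\rm Ei}(\veps)\,\|f\|_{\infty,\psi}$, already of the shape of (\ref{thm32:eq1}); for (\ref{thm32:eq2}) one uses in addition that the tail is $O(1)$ whenever ${\rm Ei}(\veps)\ge1$ (by $h(\veps)\ge K_0$) and that ${\rm Ei}(\veps)\le{\rm Ei}(\veps)^{1-0.32/\kappa_\phi^{2}}$ when ${\rm Ei}(\veps)\le1$.

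It then remains to control the head. Since $N_\veps\le 2{\rm Ei}(\veps)/\log c$, a bound $ub_{\{\phi_n\}_{n=1}^{N}}\le C_\kappa\, m_\phi\, N^{\theta}$ turns the head into a term of order $m_\phi\,(\log c)^{-\theta}\,{\rm Ei}(\veps)^{\theta}\,\|f\|_{\infty,\psi}$, and $(\log c)^{-\theta}$ is absorbed into $\tfrac{K}{\log c}+1$ after distinguishing $\log c\le1$ and $\log c>1$. For a general Banach space one takes the trivial $\theta=1$: $ub_{\{\phi_n\}_{n=1}^N}=\sup_{\sigma\subset\{1,\dots,N\}}\bigl\|\sum_{n\in\sigma}P_{\{n\}}\bigr\|\le N\,m_\phi$, which yields (\ref{thm32:eq1}). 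For a Hilbert space one invokes the quantitative ``almost unconditionality of finite sections'', $ub_{\{\phi_n\}_{n=1}^N}\lesssim m_\phi\,N^{\,1-0.32/\kappa_\phi^{2}}$, i.e.\ $\theta=1-0.32/\kappa_\phi^{2}$, which yields (\ref{thm32:eq2}).

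The main obstacle is precisely this Hilbert-space estimate with its explicit exponent defect $0.32/\kappa_\phi^{2}$: it quantifies how far the partial-sum projections of a Schauder basis in Hilbert space are from being uniformly bounded. I would obtain it either from the literature or, failing that, by interpolating the crude bound $ub_N\le N\,m_\phi$ against an $O(1)$ bound valid for ``interval-type'' subsets (controlled by $\kappa_\phi$), equivalently via a Gaussian/Rademacher averaging argument on the Hilbert space, the numerical constant $0.32$ arising from optimising the interpolation. Everything else — the tail estimate and the Banach-space case — is then routine bookkeeping with ${\rm Ei}$, $h(\veps)$ and $N_\veps$.
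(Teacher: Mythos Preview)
Your strategy matches the paper's proof: feed the head/tail estimate (\ref{eq:MultGeneral}) from Theorem~\ref{thm:GenMult} with a bound on $ub_{\{\phi_n\}_{n=1}^{N_\veps}}$ in terms of $N_\veps\le 2\,{\rm Ei}(\veps)/\log c$, using the trivial $ub_N\le N\,m_\phi$ on Banach spaces and the sharper $ub_N\le 2\,m_\phi\,N^{1-0.32/\kappa_\phi^{2}}$ on Hilbert spaces. The Hilbert-space inequality you flag as the ``main obstacle'' is exactly Nikolski's refinement of the McCarthy--Schwartz theorem, \cite[Theorem~3.1]{Nikolski14} and \cite{McCarthySchwartz65}; your proposed fallback via interpolation or Rademacher averaging is not how the constant $0.32$ arises and would not obviously reproduce it, so cite the result directly. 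Your more careful tail bookkeeping (keeping $h(\veps)$ and comparing $e^{-c\veps}$ to ${\rm Ei}(\veps)$) is sound and in fact makes the ``for all $\veps>0$'' claim more transparent than the paper's terse ``it remains to estimate $ub$'', but it is not a different method, just more explicit accounting.
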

	\begin{proof}
	By (\ref{eq:MultGeneral}), it remains to estimate $ub_{\left\{\Phi_{n}\right\}_{n=1}^{N_{\veps}}}$. 
	For a basis $\tilde{\Phi}$ of an $N$-dimensional Banach space, it is easy to see that $ub_{\tilde{\Phi}}\leq N m_{\tilde{\Phi}}$. 
	Since $N_{\veps}=\lfloor \frac{2{\rm Ei}(\veps)}{\log c}\rfloor$, and $m_{\left\{\Phi_{n}\right\}_{n=1}^{N_{\veps}}}\leq m_{\Phi}$, this implies (\ref{thm32:eq1}).\newline
	For a basis $\tilde{\Phi}$ of an $N$-dimensional Hilbert space, we have that
	\begin{equation}\label{thm32:proofeq1}
		ub_{\tilde{\Phi}} \leq 2 m_{\tilde{\Phi}} \cdot N^{1-\frac{0.32}{\kappa_{\tilde{\Phi}}^{2}}}.
	\end{equation}
	This is due to a recent result by Nikolski, \cite[Thm.~3.1]{Nikolski14}, which is a slight generalization of a classic theorem by McCarthy-Schwartz, \cite{McCarthySchwartz65}. Hence, because $m_{\left\{\Phi_{n}\right\}_{n=1}^{N_{\veps}}}\leq m_{\Phi}$ and $\kappa_{\left\{\Phi_{n}\right\}_{n=1}^{N_{\veps}}}\leq \kappa_\phi$, 
	\begin{equation*}
	ub_{\left\{\Phi_{n}\right\}_{n=1}^{N_{\veps}}}\leq 2m_{\Phi} N_{\veps}^{1-\frac{0.32}{\kappa_{\Phi}^{2}}}.
	\end{equation*}
	By the definition of $N_{\veps}$,  this yields (\ref{thm32:eq2}). 
	\end{proof}
	\begin{remark}
	The key ingredient of the proof of (\ref{thm32:eq2}) in Theorem \ref{thm32} is the McCarthy-Schwartz-type result, (\ref{thm32:proofeq1}). For general Banach spaces this does not hold. However, there exists a version of McCarthy-Schwartz's result for uniformly convex spaces by Gurarii and Gurarii \cite{Gurarii71}, see also \cite[Thm.~3.6.1 and Cor.~3.6.8]{Nikolski14}. In particular, this enables us to deduce an estimate similar to (\ref{thm32:eq2}) for $L^{p}$-spaces with $p>1$.
	\end{remark}
	\subsection{A particular example}	
	Schauder multipliers of the following type have been used to construct examples in various situations apart from functional calculus, e.g.\ \cite{BenNik99,EisnerZwart06,Haak2012}.
	\begin{definition}\label{def:L2basis}
	Let $X=L^{2}=L^{2}(-\pi,\pi)$, $\beta\in(\frac{1}{4},\frac{1}{2})$. Define $\left\{\Phi_{n}\right\}_{n\in\N}$ by
	\begin{equation*}
	\Phi_{2k}(t)=w_{\beta}(t)e^{ikt}, \quad \Phi_{2k+1}(t)=w_{\beta}(t)e^{-ikt},
	\end{equation*}
	where $k\in\N\cup\left\{0\right\}$, $t\in(-\pi,\pi)$ and 
	\begin{equation*}
	w_{\beta}(t)=\left\{\begin{array}{ll}|t|^{\beta},& |t|\in(0,\frac{\pi}{2}),\\ (\pi-|t|)^{-\beta}, & |t|\in[\frac{\pi}{2},\pi).\end{array}\right.
	\end{equation*}
	\end{definition}
	$\left\{\Phi_{n}\right\}_{n\in\N}$ forms a Schauder basis of $L^{2}$, see Lemma \ref{le:settingexample}. 	 
		\begin{theorem}\label{sharpnesslog}
		There exist  $g\in\Hinf(\Cp)$ and constants $K_{i}>0$, $i\in\{0,1,..,4\}$ such that the following holds. For every $\delta\in\left(0,\tfrac{1}{2}\right)$ there exists $A\in \Sect(0)$ on $X=L^{2}(-\pi,\pi)$ with
		\begin{enumerate}[label={(\roman*)}]
		\item\label{thm:sharpit1} $0\in\rho(A)$ and ${\rm dist}(\sigma(A),0)=2$,
		\item\label{thm:sharpit2} $\frac{K_{3}}{\delta}\leq M(A,\psi)\leq \frac{K_{4}}{\delta}M(\psi)$ for all $\psi\in(0,\pi]$, where $M(\psi)$ only depends on $\psi$.
				\item\label{thm:sharpit4}
		For all $\veps>0$, $f\in\Hinf(\Cp)$, and some absolute constant $K_{0}$, 
		\begin{equation}\label{2eq:Thm2}
			\|(fe_{\veps})(A)\| \leq K_{1} \cdot \tfrac{1}{\delta} \cdot {\rm Ei}(\veps )^{1-K_{0}\delta^{2}}\cdot \|f\|_{\infty}.
		\end{equation}
		 \item\label{thm:sharpit3} For $\veps<\tfrac{1}{4}$,
		\begin{equation}\label{1eq:Thm2}
		\|(ge_{\veps})(A)\| \geq K_{2}\cdot \tfrac{1}{\delta} \cdot |\log(\veps )|^{1-\delta}.
		\end{equation}

		\end{enumerate}
		\end{theorem}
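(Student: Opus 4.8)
The plan is to take for $A$ a diagonal (Schauder‑multiplier) operator. Concretely, set $A=\mathcal{M}_{\lambda}$ with $\lambda_{n}=2^{n}$ on $H=\Ltwo$ equipped with the basis $\{\phi_{n}\}$ of Definition~\ref{def:L2basis}, choosing the weight exponent $\beta=\tfrac{1-\delta}{2}$, so that $\beta\in(\tfrac14,\tfrac12)$ corresponds exactly to $\delta\in(0,\tfrac12)$ and $1-2\beta=\delta$. With this choice the weight $w_{\beta}^{2}$ is ``barely $A_{2}$'' at $t=\pm\pi$ (there it equals $(\pi-|t|)^{-(1-\delta)}$, whose $A_{2}$‑constant is of order $\tfrac1\delta$), and Lemma~\ref{le:settingexample} — together with the explicit biorthogonal functionals $\tfrac{1}{2\pi}w_{\beta}^{-1}e^{\pm ik_{n}t}$ — gives $m_{\phi}\sim\kappa_{\phi}\sim\tfrac1\delta$. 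Then \ref{thm:sharpit1} and \ref{thm:sharpit2} are immediate from Theorem~\ref{thm:GenMult}\ref{thm:GenMultit1}--\ref{thm:GenMultit2} with $c=2$ (so ${\rm dist}(\sigma(A),0)=2$ and $M(A,\phi)\le\kappa_{\phi}M(\phi)\le\tfrac1\delta M(\phi)$), and \eqref{2eq:Thm2} follows by inserting $m_{\phi}\sim\kappa_{\phi}\sim\tfrac1\delta$ into the Hilbert‑space bound \eqref{thm32:eq2}, which gives $\|(fe_{\veps})(A)\|\lesssim\tfrac1\delta\,{\rm Ei}(\veps)^{1-0.32/\kappa_{\phi}^{2}}\|f\|_{\infty}\lesssim\tfrac1\delta\,{\rm Ei}(\veps)^{1-K_{0}\delta^{2}}\|f\|_{\infty}$ for small $\veps$, while for large $\veps$ one uses the exponential‑decay term of \eqref{eq:MultGeneral} (whose rate is governed by $\lambda_{1}=2$, not by $\|A^{-1}\|$), which also dominates ${\rm Ei}(\veps)^{1-K_{0}\delta^{2}}$.

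The heart of the proof is the lower bound \eqref{1eq:Thm2}: exhibiting a single $g\in\Hinf(\Cp)$, independent of $\delta$, with $\|(ge_{\veps})(A)\|\gtrsim\tfrac1\delta|\log\veps|^{1-\delta}$ for $\veps\in(0,\tfrac12)$. Since $\{2^{n}\}$ is interpolating for $\Hinf(\Cp)$ (\cite{Garnett}), I would fix the bounded $\pm1$‑sequence $\mu_{n}=(-1)^{k_{n}}$, where $k_{n}$ denotes the frequency of $\phi_{n}$, so that $\mathcal{M}_{\mu}$ multiplies $w_{\beta}e^{ik_{n}t}$ by $(-1)^{k_{n}}$, i.e.\ it realises translation by $\pi$; take $g$ with $g(2^{n})=\mu_{n}$ and $\|g\|_{\infty}\le C$. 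For $\veps\in(0,\tfrac12)$ put $K=K_{\veps}:=\lfloor\tfrac{|\log\veps|}{2\log2}\rfloor$, so that $2^{2K}\veps\le1$ and every damping factor $e^{-2^{n}\veps}$ with $n\le 2K$ lies in $[e^{-1},1]$. Using $(ge_{\veps})(A)=\mathcal{M}_{(g(\lambda_{n})e^{-\veps\lambda_{n}})_{n}}$ and the pointwise identity $w_{\beta}(t)\,w_{\beta}(t-\pi)\equiv1$ on $(-\pi,\pi)$, one tests the operator on $z=w_{\beta}p_{K}$, where $p_{K}$ is a Dirichlet‑type kernel of degree $\sim K$, multiplied by a fixed trigonometric factor vanishing at $t=\pm\pi$ (which leaves $\widehat{p_{K}}(k)\sim1$ for $|k|\lesssim K$), so as to keep $\|z\|_{2}$ small near the singularities of $w_{\beta}$. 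Then $(ge_{\veps})(A)z=w_{\beta}\Phi$, with $\Phi$ a trigonometric polynomial of degree $\sim K$ whose Fourier coefficients $\widehat{p_{K}}(k)e^{-2^{2|k|}\veps}$ all lie in $[e^{-1},1]$ for $|k|\le K$; hence $\Phi$ is, up to translation by $\pi$, a Fejér‑type kernel with non‑negative coefficients, so it still has a peak of height $\sim K$ at $t=\pi$ — precisely where $w_{\beta}(t)=(\pi-|t|)^{-(1-\delta)/2}$ blows up.

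From this, $\|(ge_{\veps})(A)z\|_{2}^{2}\gtrsim\int_{|t-\pi|\lesssim 1/K}w_{\beta}(t)^{2}K^{2}\,dt\sim K^{2}\!\int_{0}^{1/K}\!s^{\delta-1}\,ds\sim\tfrac1\delta K^{2-\delta}$, while $\|z\|_{2}$ is of strictly smaller order ($p_{K}$ is bounded away from the singularities and behaves like $\min(K,1/|t|)$ near $0$, where $w_{\beta}(t)=|t|^{(1-\delta)/2}$ is small). Dividing, and carefully accounting for the $\delta$‑powers in the weighted $L^{2}$‑norms of Dirichlet kernels, one arrives at $\|(ge_{\veps})(A)\|\ge\|(ge_{\veps})(A)z\|_{2}/\|z\|_{2}\gtrsim\tfrac1\delta K^{1-\delta}\sim\tfrac1\delta|\log\veps|^{1-\delta}$, which is \eqref{1eq:Thm2}. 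The two steps I expect to be the main obstacles are: (a) confirming that the damping by the factors $e^{-2^{2|k|}\veps}$ does not destroy the $t=\pi$ peak of $\Phi$ — this is exactly where the choice $K\sim|\log\veps|$ enters, so that all relevant coefficients are comparable to $1$ and $\Phi$ behaves like a positive kernel there; and (b) the precise bookkeeping of the $\delta$‑dependence in the weighted $L^{2}$‑estimates for Dirichlet kernels across the singularities $t=\pm\pi$, which is what produces both the factor $\tfrac1\delta$ and the exponent $1-\delta$, and which must be matched with $m_{\phi}\sim\kappa_{\phi}\sim\tfrac1\delta$ so that \eqref{2eq:Thm2} and \eqref{1eq:Thm2} together confirm that the logarithmic bound of Theorem~\ref{thm1} is essentially optimal as $\delta\to0$.
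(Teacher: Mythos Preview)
Your setup for \ref{thm:sharpit1}--\ref{thm:sharpit4} is essentially the paper's: take $A=\mathcal{M}_{\lambda}$ with $\lambda_{n}=2^{n}$ on the weighted basis of Definition~\ref{def:L2basis}, and these items follow from Theorems~\ref{thm:GenMult} and~\ref{thm32} exactly as you describe. (The paper chooses $\beta=\tfrac12-\tfrac{\delta}{4}$ rather than your $\beta=\tfrac{1-\delta}{2}$, but that is only a reparametrisation.)

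For \ref{thm:sharpit3}, however, your route differs from the paper's and contains a genuine gap. The paper does \emph{not} test on a single vector; it evaluates the bilinear form $\langle(ge_{\veps})(A)x,y\rangle$ with the explicit functions $x(t)=|t|^{-\beta}\mathbb{1}_{(0,\pi/2)}(|t|)$ and $y(t)=(\pi-|t|)^{-\beta}\mathbb{1}_{(\pi/2,\pi)}(|t|)$, whose Schauder and dual-Schauder coefficients are computed in Lemma~\ref{le:settingexample}: they are real, satisfy $y_{2k}=(-1)^{k}2\pi x_{2k}$, and $x_{2k}\sim\tfrac{k^{-1+2\beta}}{1-2\beta}$. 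Biorthogonality collapses the pairing to $2\pi\sum_{n}e^{-\lambda_{n}\veps}|x_{n}|^{2}$, which by the Growth Lemma is $\gtrsim(1-2\beta)^{-2}|\log\veps|^{-1+4\beta}$; since $\|x\|\,\|y\|\sim(1-2\beta)^{-1}$, division yields the full prefactor $(1-2\beta)^{-1}\sim\tfrac1\delta$.

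Your single-vector test cannot easily recover this prefactor. With $z=w_{\beta}p_{K}$ and $p_{K}$ Dirichlet-type (even modified to vanish at $\pm\pi$), one indeed gets $\|(ge_{\veps})(A)z\|_{2}^{2}\gtrsim K^{2-\delta}/\delta$ from the peak at $t=\pi$, but $\|z\|_{2}^{2}$ also acquires a factor $\tfrac1\delta$: on the intermediate region $|t|\in(1/K,\pi/2)$ one has $|t|^{2\beta}|p_{K}(t)|^{2}\sim|t|^{-1-\delta}$, whose integral is $\sim K^{\delta}/\delta$. The quotient is then only $K^{1-\delta}$. Switching to a de la Vall\'ee Poussin kernel (so $|p_{K}|\lesssim\min(K,1/(Kt^{2}))$) kills the $\tfrac1\delta$ in the denominator but yields at best $\tfrac{1}{\sqrt{\delta}}K^{1-\delta}$, still short of \eqref{1eq:Thm2}. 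This is not a technicality: the factor $\tfrac1\delta$ is precisely what, combined with $M(A,\phi)\sim\tfrac1\delta$, makes Corollary~\ref{cor4} nontrivial --- with only $\tfrac{1}{\sqrt{\delta}}$ the ratio $\|(ge_{\veps})(A)\|/M(A,\phi)$ would tend to $0$ as $\delta\to0$. The ``careful bookkeeping'' you flag in (b) is therefore the crux, and it is exactly what the bilinear argument handles cleanly via the explicit coefficient asymptotics of Lemma~\ref{le:settingexample}.
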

		\begin{proof}
		Let $\beta=\frac{1}{2}-\frac{\delta}{4}\in(\frac{3}{8},\frac{1}{2})$ and let $\left\{\Phi_{n}\right\}_{n\in\N}$ denote the basis from Definition \ref{def:L2basis}. 
		By Lemma \ref{le:settingexampleit1}, $\kappa_{\Phi}\sim\frac{1}{1-2\beta}=\frac{2}{\delta}$. With respect to $\left\{\Phi_{n}\right\}_{n\in\N}$, we consider the multiplication operator $A=\mathcal{M}_{\lambda}$ on $L^{2}(-\pi,\pi)$, where $\lambda_{n}=2^{n}$. 
		By Theorem \ref{thm:GenMult}, \ref{thm:sharpit1} and  the inequality $M(A,\psi)\leq \frac{K_{4}}{\delta}M(\psi)$ in \ref{thm:sharpit2} follow. The other inequality in \ref{thm:sharpit2} will be discussed at the end of the proof.
		
		\ref{thm:sharpit4} follows by (\ref{thm32:eq2}) from Theorem \ref{thm32}.
			
			To show \ref{thm:sharpit3}, we choose $x(t)= |t|^{-\beta}\mathbb{1}_{(0,\frac{\pi}{2})}(|t|)$ and $y(t)= (\pi-|t|)^{-\beta}\mathbb{1}_{(\frac{\pi}{2},\pi)}(|t|)$. By Lemma \ref{le:settingexampleit4}, we have that for $x=\sum_{n} x_{n}\Phi_{n}$ and $y=\sum_{n} y_{n}\Phi_{n}^{*}$, the coefficients $x_{n}$ and $y_{n}$ are real and
			\begin{equation}\label{eq:coeffbeh99}
			c_{3}\tfrac{k^{-1+2\beta}}{1-2\beta}\leq x_{2k}= x_{2k+1}\leq C_{3} \tfrac{k^{-1+2\beta}}{1-2\beta}\quad  \text{ and}\quad y_{2k}=y_{2k+1}=(-1)^{k}2\pi\cdot x_{2k}.
			\end{equation}
			By setting $\mu_{2n}=\mu_{2n+1}=(-1)^{n}$ for all $n\in\N$ and using that $\langle \Phi_{n},\Phi_{m}^{*}\rangle=\delta_{nm}$, we conclude that
		\begin{align}
		|\langle \mathcal{M}_{\mu}\mathcal{M}_{e^{-\lambda_{n}\veps}}x,y\rangle| = {}& 2\pi\sum_{n\in\N} e^{-\lambda_{n}\veps} |x_{n}|^{2}\notag\\ 
		\stackrel{(\ref{eq:coeffbeh99}) }{\geq}  {}& \tfrac{c_{3}^{2}}{(1-2\beta)^{2}}\sum_{k\in\N} (e^{\lambda_{2k}\veps}+e^{\lambda_{2k+1}\veps}) k^{-2+4\beta}
		\stackrel{\eqref{eq:growthlemma2}}{\geq} {}\tfrac{c_{3}^{2}c_{-\frac{1}{2},4}}{(1-2\beta)^{2}} |\log(\veps)| ^{-1+4\beta}, \label{eq2:Thm2}
		\end{align}
		for $\veps<\frac{1}{4}$, where we used Lemma \ref{le:growthlemma}~\ref{itGL1}. Since $\|x\|\cdot\|y\|\sim\frac{1}{1-2\beta}$, and $2-4\beta=\delta$,
		\begin{equation*}
		\|\mathcal{M}_{\mu}\mathcal{M}_{e^{-\lambda_{n}\veps}}\|\geq K_{2}\: \frac{1}{\delta}\: |\log(\veps)|^{1-\delta},\quad \veps<\tfrac{1}{4}.
		\end{equation*}
		Since $(\lambda_{n})$ is an interpolating sequence, we can find $g\in\Hinf(\Cm)$ such that $g(\lambda_{n})=\mu_{n}$ for all $n\in\N$. Thus, $g(A)=\mathcal{M}_{\mu}$ and (\ref{1eq:Thm2}) follows.
		
		 To see that $\frac{K_{3}}{\delta}\leq M(A,\phi)$, one can show that $|\langle R(-1,A)x,y\rangle|\geq \frac{K_{3}}{\delta}\|x\|\:\|y\|$ for $x$, $y$ from above with a similar proof as for \ref{thm:sharpit3}.
				\end{proof}
				Theorem \ref{sharpnesslog} implies that estimate (\ref{eq11:expstab}) in Theorem \ref{thm:expstab} is sharp in $M(A,\phi)$ and $\veps$ as $\delta\to0^{+}$.
				\begin{corollary}\label{cor4}
				Let $X$ be an infinite-dimensional Hilbert space and  $\phi<\frac{\pi}{2}$. Then, there exists  $K_{1},K_{2}>0$ and a sequence $(A_{n})_{n\in\N}\subset \Sect(0)$ with ${\rm dist}(\sigma(A_{n}),0)=2$ such that for all $\veps <\frac{1}{4}$,
				\begin{equation}\label{eq:cor4}
				K_{2}\:|\log\veps\,|<\sup\left\{\frac{\|(fe_{\veps})(A_{n})\|}{M(A_{n},\phi)\:\|f\|_{\infty}}: f\in\Hinf(\Cp)\setminus\{0\}, n\in\N\right\}<K_{1}\:|\log\veps\,|.
				\end{equation}
				\end{corollary}
				\begin{remark}\label{remcor4}
				\begin{enumerate}
				\item\label{remcor41} In Corollary \ref{cor4}, the distance ${\rm dist}(\sigma(A_{n}),0)$ is fixed. Such a condition is not surprising in the view of Theorem \ref{thm:expstab} because of the following reasoning. If ${\rm dist}(\sigma(A_{n}),0)$ tends to $0$ as $n\to\infty$, then, by $\|A_{n}^{-1}\|\geq \frac{1}{{\rm dist}(A_{n},0)}$, it follows that $\frac{1}{\|A_{n}^{-1}\|}\to 0$ and thus,  the right-hand side of \eqref{eq11:expstab} would tend to $\infty$ for fixed $\veps$. \newline 
	However, it is interesting that for $A_{n}$ as chosen in the proof of Theorem \ref{sharpnesslog} (more precisely, $A_{n}=A$ and $\delta \sim n^{-1}$) one can show that $\|A_{n}^{-1}\|\sim n$, thus, $\frac{1}{\|A_{n}^{-1}\|}\to 0$. This shows that the upper estimate in \eqref{eq:cor4} does not follow from Theorem \ref{thm:expstab}, but instead is due to Theorem \ref{thm:GenMult}. Hence, the latter result can be seen as an improvement in the situation of Schauder multipliers for which the dependence on $\|A_{n}^{-1}\|^{-1}$ can be replaced by ${\rm dist}(\sigma(A_{n}),0)$.
				\item
				Corollary \ref{cor4} shows that the logarithmic behavior in $\veps$ is essentially optimal.
				However, we point out that in Theorem \ref{sharpnesslog}, $M(A,\phi)\to\infty$ as $\delta\to0^{+}$. 
				Therefore, for fixed $M(A,\phi)$, the behavior in $\varepsilon\to0^{+}$ could be better than $|\log\veps|$.
				For a similar effect we refer to the question of  sharpness of Spijker's result on the \textit{Kreiss-Matrix-Theorem}, see \cite{Spijker91,SpijkerTracognaWelfert03} and the recent contribution by Nikolski \cite{Nikolski14}.
				\item
				In \cite[Thm.~2.1, Rem.~2.2]{Vitse05}, it is shown that estimate (\ref{cor1:eq}) is indeed sharp in $\veps$ and $\sigma$ on general Banach spaces. 
				Furthermore, Vitse \cite[Thm.~2.3 and Rem.~2.4]{Vitse05} states that for any Hilbert space and any $\delta\in(0,1)$, one can find a sectorial operator $A$ with $\omega_{A}<\frac{\pi}{2}$ such that 	
					\begin{equation}
					\sup\left\{\|g(A)\|:g\in H^{\infty}[\veps,\sigma],\|g\|_{\infty,\Cp}\leq 1\right\} \geq a \log \left(\frac{e\sigma}{\veps}\right)^{\delta},
					\end{equation}
					where $a$ depends only on $M(A,\frac{\pi}{2})$.
					Therefore, item \ref{thm:sharpit4} of Theorem \ref{sharpnesslog} and Corollary \ref{cor4} can be seen as a version for $0\in\rho(A)$ and $\sigma=\infty$.
					  Theorem \ref{sharpnesslog}~\ref{thm:sharpit3} shows that the behavior of $\|(fe_{\veps})(A)\|$ is indeed better than $|\log(\veps)|$. 
					We remark that Vitse's result, \cite[Thm.~2.3]{Vitse05} is stated for Banach spaces which \textit{uniformly contain uniformly complemented copies of $\ell^{2}$}, which is more general than for Hilbert spaces. It is not hard to see that Corollary \ref{cor4} generalizes to this more general setting.
			\end{enumerate}
				\end{remark}
			\section{Square function estimates improve the situation}\label{sec:Squarefunct}
			The following notion characterizes bounded $\Hinf$-calculus on Hilbert spaces. It was already used in the early work of McIntosh, \cite{mcintoshHinf} and has been investigated intensively since then.
				\begin{definition}\label{def:sqfctest}
				Let $A\in\Sect(\omega)$ on the Banach space $X$. We say that $A$ satisfies \textit{square function estimates} if there exists $\zeta\in\HinfO(\Sigma_{\phi})\setminus\left\{0\right\}$, $\phi>\omega$ and $K_{\zeta}>0$ such that 
				\begin{equation}\label{sqfctest}
				\int_{0}^{\infty} \|\zeta(tA)x\|^{2}\ \frac{dt}{t}\leq K_{\zeta}^{2} \|x\|^{2},\quad \forall x\in X.
				\end{equation}
				\end{definition}
				The property of satisfying square functions estimates does not rely on the particular 
				function $\zeta$. In fact, for $\zeta, \eta\in\HinfO(\Sigma_{\phi})\setminus\left\{0\right\}$
				\begin{equation}\label{eq1:sqfctest}
				\exists K>0 \ \forall h\in\Hinf(\Sigma_{\phi}):\quad \int_{0}^{\infty} \|(\zeta_{t}h)(A)x\|^{2}\ \frac{dt}{t} \leq K^{2} \|h\|_{\infty,\phi}^{2} \int_{0}^{\infty} \|\eta_{t}(A)x\|^{2}\ \frac{dt}{t}, 
				\end{equation}
				where $\zeta_{t}(z)=\zeta(tz)$ and $\eta_{t}(z)=\eta(tz)$. 
				We remark that $K$ can be chosen only depending on $\zeta,\eta$ and $M(A,\phi)$.
				The result can be found in \cite[Prop.~E]{McIntoshOpHam} for Hilbert spaces, 
				but also holds for general Banach spaces as pointed out in \cite[Satz 2.1]{haakthesis}.	
				The following result goes back to McIntosh in his early work on $\Hinf$-calculus, \cite{mcintoshHinf} and can also be found in \cite[Thm.~7.3.1]{haasesectorial}.
				\begin{theorem}[McIntosh '86]\label{thm:McIntosh}
				Let $X$ be a Hilbert space, $A\in\Sect(\omega)$, densely defined and with dense range. Then, the following assertions are equivalent.
				\begin{enumerate}
				\item The $\Hinf(\Sigma_{\mu})$-calculus for $A$ is bounded for some (all) $\mu\in(\omega,\pi)$.
				\item $A$ and $A^{*}$ satisfy square function estimates.
				\end{enumerate}
				\end{theorem}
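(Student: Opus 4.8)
The plan is to prove the two implications separately and, crucially, \emph{for each fixed $\mu\in(\omega,\pi)$}: since statement (2) makes no reference to $\mu$ and, by \eqref{eq1:sqfctest}, neither does the sector appearing in the square function estimate, establishing the equivalence for every such $\mu$ yields the ``some $\iff$ all'' clause of (1) as well. Throughout I will use that a densely defined sectorial operator with dense range is injective (\cite[Proposition 2.1.1]{haasesectorial}), so that the $\Hinf(\Sigma_{\mu})$-calculus is defined on all of $\Hinf(\Sigma_{\mu})$, and that on a Hilbert space $A^{*}$ is again densely defined, sectorial of the same angle $\omega$, injective and with dense range, with $R(z,A^{*})=R(\bar z,A)^{*}$ and $g(A)^{*}=\tilde g(A^{*})$, where $\tilde g(z):=\overline{g(\bar z)}$ and $\|\tilde g\|_{\infty}=\|g\|_{\infty}$. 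In particular $A^{*}$ again meets the hypotheses of the theorem, and $A^{*}$ has a bounded $\Hinf(\Sigma_{\mu})$-calculus whenever $A$ does.

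\emph{``(2)$\Rightarrow$(1)''.} Fix $\phi\in(\omega,\mu)$ and choose $\psi,\eta\in\HinfO(\Sigma_{\phi})\setminus\{0\}$ obeying a Calder\'on-type reproducing identity $\int_{0}^{\infty}\psi(tz)\eta(tz)\,\tfrac{dt}{t}=1$ for all $z\in\Sigma_{\phi}$ (for instance $\psi=\eta=\sqrt{6}\,\tfrac{z}{(1+z)^{2}}$; scale invariance of the integral together with holomorphy on $\Sigma_{\phi}$ forces the value $1$ on the whole sector). For $f\in\HinfO(\Sigma_{\mu})$ the polynomial decay of $f$ makes $\int_{0}^{\infty}(f\psi_{t}\eta_{t})(A)\,\tfrac{dt}{t}$ absolutely convergent in $\Bo(X)$, and a Fubini argument for Riesz--Dunford integrals identifies it with $f(A)$. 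Writing $(f\psi_{t}\eta_{t})(A)=(f\eta_{t})(A)\psi_{t}(A)$ by multiplicativity and $(f\eta_{t})(A)^{*}=(\tilde f\,\tilde\eta_{t})(A^{*})$, one obtains for $x,y\in X$
\begin{equation*}
|\langle f(A)x,y\rangle|=\Bigl|\int_{0}^{\infty}\langle\psi_{t}(A)x,(\tilde f\,\tilde\eta_{t})(A^{*})y\rangle\tfrac{dt}{t}\Bigr|\le\Bigl(\int_{0}^{\infty}\|\psi_{t}(A)x\|^{2}\tfrac{dt}{t}\Bigr)^{1/2}\Bigl(\int_{0}^{\infty}\|(\tilde f\,\tilde\eta_{t})(A^{*})y\|^{2}\tfrac{dt}{t}\Bigr)^{1/2}
\end{equation*}
by Cauchy--Schwarz in $L^{2}((0,\infty),\tfrac{dt}{t};X)$. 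The first factor is $\lesssim\|x\|$ by the square function estimate for $A$ (available for our $\psi$ via \eqref{eq1:sqfctest}); the second is $\lesssim\|\tilde f\|_{\infty,\phi}\|y\|\le\|f\|_{\infty,\mu}\|y\|$ by \eqref{eq1:sqfctest} applied to $A^{*}$ with $h=\tilde f$, using the square function estimate for $A^{*}$. Hence $\|f(A)\|\lesssim\|f\|_{\infty,\mu}$ on $\HinfO(\Sigma_{\mu})$, and the Convergence Lemma \cite[Proposition 5.1.4]{haasesectorial}, applied to the standard approximants $f\rho_{n}$ with $\rho_{n}\in\HinfO(\Sigma_{\mu})$ tending to $1$ boundedly and pointwise, extends the bound to all of $\Hinf(\Sigma_{\mu})$.

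\emph{``(1)$\Rightarrow$(2)''.} Since the calculus is bounded there is $C>0$ with $\|g(A)\|\le C\|g\|_{\infty,\mu}$ for all $g\in\Hinf(\Sigma_{\mu})$ (this is part of boundedness, cf.\ \eqref{ineq:bddcalc}). Put $\psi(z)=\tfrac{z}{(1+z)^{2}}\in\HinfO(\Sigma_{\mu})$; from $|1+z|\ge c_{\mu}(1+|z|)$ on $\Sigma_{\mu}$ we get $|\psi(z)|\le c_{\mu}^{-2}\min(|z|,|z|^{-1})$, hence $C_{\mu}:=\sup_{z\in\Sigma_{\mu}}\sum_{k=-\infty}^{\infty}|\psi(2^{k}z)|<\infty$. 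Let $(r_{k})$ denote the Rademacher functions on $(0,1)$; for $N\in\N$, $t\in[1,2]$ and $s\in(0,1)$ the function $g_{s,t,N}(z)=\sum_{|k|\le N}r_{k}(s)\psi(2^{k}tz)$ lies in $\HinfO(\Sigma_{\mu})$ with $\|g_{s,t,N}\|_{\infty,\mu}\le C_{\mu}$ (using $t\Sigma_{\mu}=\Sigma_{\mu}$). Because $X$ is Hilbert,
\begin{equation*}
\sum_{|k|\le N}\|\psi(2^{k}tA)x\|^{2}=\int_{0}^{1}\Bigl\|\sum_{|k|\le N}r_{k}(s)\psi(2^{k}tA)x\Bigr\|^{2}ds=\int_{0}^{1}\|g_{s,t,N}(A)x\|^{2}ds\le C^{2}C_{\mu}^{2}\|x\|^{2}.
\end{equation*}
Letting $N\to\infty$ and integrating over $t\in[1,2]$ against $\tfrac{dt}{t}$, the dyadic decomposition $(0,\infty)=\bigcup_{k}[2^{k},2^{k+1})$ yields $\int_{0}^{\infty}\|\psi(tA)x\|^{2}\tfrac{dt}{t}\le(\log 2)\,C^{2}C_{\mu}^{2}\|x\|^{2}$, i.e.\ $A$ satisfies square function estimates; the same argument for $A^{*}$ (which also has a bounded $\Hinf(\Sigma_{\mu})$-calculus) finishes the proof.

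The delicate points, suppressed above, are exactly the two pieces of ``machinery'': in ``(2)$\Rightarrow$(1)'' the absolute convergence of $\int_{0}^{\infty}(f\psi_{t}\eta_{t})(A)\,\tfrac{dt}{t}$ in $\Bo(X)$ and its identification with $f(A)$ by Fubini for Riesz--Dunford integrals; in ``(1)$\Rightarrow$(2)'' the uniform-in-$(s,t,N)$ bound $\|g_{s,t,N}\|_{\infty,\mu}\le C_{\mu}$, which is where the explicit choice of $\psi$ and the sector geometry genuinely enter. Both directions use that $X$ is Hilbert in an essential way: the first through Cauchy--Schwarz combined with $g(A)^{*}=\tilde g(A^{*})$, the second through the exact identity $\int_{0}^{1}\|\sum_{k}r_{k}(s)v_{k}\|^{2}ds=\sum_{k}\|v_{k}\|^{2}$, which has no constant-free analogue beyond Hilbert space.
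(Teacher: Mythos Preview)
The paper does not prove this theorem; it is stated as a known result attributed to McIntosh \cite{mcintoshHinf} and referenced to \cite[Theorem 7.3.1]{haasesectorial}. Your proof is correct and follows precisely the classical McIntosh argument as presented in Haase's book: the direction (2)$\Rightarrow$(1) via the Calder\'on reproducing formula combined with Cauchy--Schwarz and the dual square function estimate, and (1)$\Rightarrow$(2) via Rademacher randomization and the Hilbert-space orthogonality identity. The ``delicate points'' you flag are handled in the standard way --- the absolute convergence and Fubini identification in the first direction reduce, after one more application of Fubini, to $\int_{\Gamma}|f(z)|\,\tfrac{|dz|}{|z|}<\infty$ for $f\in\HinfO$, and the uniform bound $C_{\mu}$ in the second direction is exactly the geometric estimate you give. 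So there is nothing to compare against in the paper itself, and your write-up is a faithful rendering of the literature proof.
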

				Note that on a Hilbert space, $\overline{D(A)}=X$ follows from sectorality, see \cite[Prop.~2.1.1]{haasesectorial}.\newline
				 Le Merdy showed in \cite[Thm.~5.2]{LeMerdy2003} that having square function estimates for only $A$ or $A^{*}$ is not sufficient to get a bounded calculus.
				However, we will show that the validity of single square function estimates always yields an improved growth of $\|(fe_{\veps})(A)\|$ near zero. Roughly speaking, having `half of the assumptions' in McIntosh's result indeed interpolates the general logarithmic behavior of $\|(fe_{\veps})(A)\|$.
				\begin{theorem}\label{thm:sqfctestlog}
				Let $\omega<\phi<\frac{\pi}{2}$ and $A\in\Sect(\omega)$ be densely defined on the Banach space $X$. Assume that
				\begin{itemize}
				\item $0\in\rho(A)$ and that
				\item $A$ satisfies square function estimates.
				\end{itemize}
				Then for every $\kappa\in(0,1)$ there exists $C=C(\kappa,M(A,\phi),\cos(\phi))>0$ such that for all $\veps>0$ and  for $f\in\Hinf(\Sigma_{\phi})$,
				\begin{equation}\label{sqrtlogeq1}
				\|(fe_{\veps})(A)\| \leq CK_{\eta} \cdot \left[{\rm Ei}\left(\frac{\kappa\veps\cos\phi}{\|A^{-1}\|}\right)\right]^{\frac{1}{2}}\cdot \|f\|_{\infty,\phi},
				\end{equation}
			  where $K_\eta$ denotes the constant in (\ref{def:sqfctest}) for $\eta(z)=z^{\frac{1}{2}}e^{-z}$.
				
				\end{theorem}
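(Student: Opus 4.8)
The strategy is a one-sided version of McIntosh's argument (Theorem \ref{thm:McIntosh}): reconstruct $(fe_\veps)(A)$ from a Calderón-type reproducing formula, estimate one of the two resulting factors by the given square function estimate and the other by a resolvent contour integral in the spirit of the proof of Theorem \ref{thm:expstab}. The point is that the \emph{missing} dual square function estimate is compensated by the exponential weight $e_\veps$ together with $0\in\rho(A)$, at the price of a single Cauchy--Schwarz, and it is this Cauchy--Schwarz which turns the $|\log\veps|$ of Theorem \ref{thm:expstab} into $|\log\veps|^{1/2}$. The choice $\psi(z)=z^{1/2}e^{-z}$ is natural here: $\psi(tA)x=t^{1/2}A^{1/2}T(t)x$, so (\ref{sqfctest}) reads $\int_0^\infty\|A^{1/2}T(t)x\|^2\,dt\le K_\psi^2\|x\|^2$, the classical square function along an analytic semigroup.

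\emph{Reproducing formula.} Since $\psi(z)=z^{1/2}e^{-z}\in\HinfO(\Sigma_\phi)$, the equivalence (\ref{eq1:sqfctest}) shows that $A$ satisfies (\ref{sqfctest}) also for this $\psi$; let $K_\psi$ be the associated constant. Put $\tilde\psi=2\psi$, so that $\int_0^\infty\psi(tz)\tilde\psi(tz)\,\tfrac{dt}{t}=2z\int_0^\infty e^{-2tz}\,dt=1$ whenever $\Re z>0$. Fix $\omega<\theta<\phi$. For $f\in\Hinf(\Sigma_\phi)$ and $\veps>0$ the function $fe_\veps$ lies in $\Hinf_{(0)}(\Sigma_\theta)$, so by (\ref{rieszdunford}) and $0\in\rho(A)$ the operator $(fe_\veps)(A)$ is the integral of $f(z)e^{-\veps z}R(z,A)$ over the contour $\Gamma=\Gamma_1\cup\Gamma_2\cup\Gamma_3$ from the proof of Theorem \ref{thm:expstab}, now with $r=\kappa/\|A^{-1}\|$ (recall $B_{1/\|A^{-1}\|}(0)\subset\rho(A)$); on $\Gamma$ one has $\Re z\ge r\cos\theta>0$. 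Inserting the identity above and interchanging the $t$- and $z$-integrations (the double integral is absolutely convergent --- this is exactly what the next step estimates), together with multiplicativity of the $\HinfO$-calculus (note $fe_\veps\psi_t,\tilde\psi_t\in\HinfO(\Sigma_\theta)$), gives, for all $x\in X$,
\begin{equation*}
(fe_\veps)(A)x=\int_0^\infty (fe_\veps\psi_t)(A)\,\tilde\psi_t(A)x\ \frac{dt}{t},\qquad \psi_t(z)=\psi(tz),\ \tilde\psi_t(z)=\tilde\psi(tz).
\end{equation*}

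\emph{Estimating the two factors.} Cauchy--Schwarz in $L^2((0,\infty),\tfrac{dt}{t})$ yields
\begin{equation*}
\|(fe_\veps)(A)x\|\le\Big(\int_0^\infty\|(fe_\veps\psi_t)(A)\|^2\,\tfrac{dt}{t}\Big)^{1/2}\Big(\int_0^\infty\|\tilde\psi_t(A)x\|^2\,\tfrac{dt}{t}\Big)^{1/2},
\end{equation*}
and the second factor equals $2\big(\int_0^\infty\|\psi_t(A)x\|^2\tfrac{dt}{t}\big)^{1/2}\le 2K_\psi\|x\|$ by (\ref{sqfctest}). For the first factor, since $\psi_t(z)e^{-\veps z}=(tz)^{1/2}e^{-(t+\veps)z}$, the operator $(fe_\veps\psi_t)(A)$ is the contour integral over $\Gamma$, on whose rays $\|R(z,A)\|\le M(A,\theta)/|z|$ with $\Re z=|z|\cos\theta$, and on whose short segment $\Gamma_2$ (at real part $r\cos\theta$) $\|R(z,A)\|\le\|A^{-1}\|/(1-\kappa)$. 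The ray part is bounded by $\tfrac{1}{\pi}\|f\|_\infty M(A,\theta)\,t^{1/2}\int_r^\infty s^{-1/2}e^{-(t+\veps)s\cos\theta}\,ds$ and the $\Gamma_2$ part by a constant depending on $\kappa,\theta$ times $\|f\|_\infty\|A^{-1}\|^{-1/2}t^{1/2}e^{-(t+\veps)r\cos\theta}$. Squaring, using $(a+b)^2\le 2a^2+2b^2$, integrating $\tfrac{dt}{t}$ over $(0,\infty)$ and substituting $u=t+\veps$ and then $x=ur\cos\theta$ gives
\begin{equation*}
\int_0^\infty\|(fe_\veps\psi_t)(A)\|^2\,\tfrac{dt}{t}\le C_0\big(\kappa,M(A,\theta),\cos\theta\big)\,\|f\|_\infty^2\Big(\int_{\veps r\cos\theta}^\infty\frac{1}{x}\Big(\int_x^\infty u^{-1/2}e^{-u}\,du\Big)^{2}dx+e^{-2\veps r\cos\theta}\Big).
\end{equation*}
Since $\int_x^\infty u^{-1/2}e^{-u}\,du\le\sqrt{\pi}$ for all $x$ and $\le e^{-x}$ for $x\ge 1$, and ${\rm Ei}(x_0)=\int_{x_0}^\infty u^{-1}e^{-u}\,du$, one checks elementarily (splitting at $x_0=1$ and using (\ref{eq:expintEST})) that the bracket is $\lesssim{\rm Ei}(\veps r\cos\theta)$. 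As $r\cos\theta=\kappa\cos\theta/\|A^{-1}\|\ge\kappa\cos\phi/\|A^{-1}\|$ and ${\rm Ei}$ is decreasing, this is $\lesssim C_0(\kappa,M(A,\theta),\cos\theta)\|f\|_\infty^2\,{\rm Ei}\big(\kappa\veps\cos\phi/\|A^{-1}\|\big)$. Combining with the bound for the second factor and letting $\theta\to\phi^-$ (so that $M(A,\theta)\to M(A,\phi)$ and $\cos\theta\to\cos\phi$) yields (\ref{sqrtlogeq1}) with $C=C(\kappa,M(A,\phi),\cos\phi)$.

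I expect the main work to be twofold. First, the rigorous justification of the reproducing formula: absolute convergence of the double contour integral (provided by the estimate above), the Fubini interchange, and the elementary but somewhat pedantic $\HinfO$-calculus manipulations ($fe_\veps\in\Hinf_{(0)}$, $fe_\veps\psi_t\in\HinfO$, and $(fe_\veps\psi_t\tilde\psi_t)(A)=(fe_\veps\psi_t)(A)\tilde\psi_t(A)$). Second, the scalar estimate $\int_{x_0}^\infty x^{-1}\big(\int_x^\infty u^{-1/2}e^{-u}\,du\big)^{2}dx\lesssim{\rm Ei}(x_0)$ for all $x_0>0$; this is where the exponent $\tfrac12$ of the logarithm is produced, directly reflecting that only one square function estimate is available and a single Cauchy--Schwarz (rather than McIntosh's pair of quadratic estimates) is used.
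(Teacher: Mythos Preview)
Your proof is correct and follows the same architecture as the paper's: a Calder\'on reproducing formula for $(fe_\veps)(A)$, a single Cauchy--Schwarz in $L^2((0,\infty),\tfrac{dt}{t})$, one factor handled by the assumed square function estimate and the other by a resolvent/contour estimate exploiting $0\in\rho(A)$.

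The only notable difference is where you place $f$. The paper splits $e_\veps=e_{\veps/2}e_{\veps/2}$ and $\psi_t=\sqrt{\psi_t}\sqrt{\psi_t}$ with $\psi(z)=ze^{-z}$, puts $f$ on the \emph{square-function} side, and absorbs it there via the general estimate (\ref{eq1:sqfctest}); the operator-norm factor then reduces cleanly to $\int_{\veps/2}^\infty\|A^{1/2}T(t)\|^2\,dt$, which Lemma~\ref{le:fracT(t)growth} turns directly into ${\rm Ei}(\kappa\veps R\cos\phi)$. You instead put $f$ on the \emph{operator-norm} side and use the raw square function estimate on $\tilde\psi_t(A)x$; this avoids invoking (\ref{eq1:sqfctest}) but forces you to redo a contour estimate for $(fe_\veps\psi_t)(A)$ with $f$ present (harmless, since $|f|\le\|f\|_\infty$) and to prove the extra scalar inequality $\int_{x_0}^\infty x^{-1}\big(\int_x^\infty u^{-1/2}e^{-u}du\big)^2dx\lesssim{\rm Ei}(x_0)$. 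Both routes yield the same constant dependence; the paper's is shorter by leaning on pre-established lemmas, yours is a touch more self-contained.
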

				
				\begin{proof} 
				Let $\zeta(z)=ze^{-z}$. Since $\sqrt{\zeta}\in\Hinf_{0}(\Sigma_{\phi})$, we have by (\ref{eq1:sqfctest})  that			
				\begin{equation}\label{sqrtlogeq2}
				\int_{0}^{\infty}\|(fe_{\veps}\sqrt{\zeta_{t}})(A)x\|^{2} \ \frac{dt}{t}\leq K^{2}\  \|fe_{\veps}\|_{\infty,\phi}^{2}\cdot \int_{0}^{\infty}\|(\sqrt{\zeta_{t}})(A)x\|^{2}\ \frac{dt}{t},
				\end{equation}
				where $K>0$ only depends on $M(A,\phi)$.
				The integral on the right-hand side is finite because $A$ satisfies square function estimates (for $\eta=\sqrt{\zeta}$).
				 It is easy to see that 
				$\int_{0}^{\infty}\zeta_{t}(z)\ \frac{dt}{t}=1$ for $z\in\Sigma_{\phi}$,  and applying the Convergence Lemma (to Riemann sums), \cite[Prop.~5.1.4]{haasesectorial}, yields $y=\int_{0}^{\infty}\zeta_{t}(A)y\frac{dt}{t}$ for $y\in X$.
				Thus, 
				\begin{align}\notag
				\|(fe_{\veps})(A)x\| = {}&\left\| \int_{0}^{\infty} (fe_{\veps}\zeta_{t})(A)x\ \frac{dt}{t}\right\|\\
				\leq{}&\int_{0}^{\infty} \left\| \left(e_{\frac{\veps}{2}}\sqrt{\zeta_{t}}\right)(A) \left(fe_{\frac{\veps}{2}}\sqrt{\zeta_{t}}\right)(A)x\right\|\frac{dt}{t}\notag\\
				\leq {}& \left(\int_{0}^{\infty}\|\left(e_{\frac{\veps}{2}}\sqrt{\zeta_{t}}\right)(A)\|^{2}\frac{dt}{t}\right)^{\frac{1}{2}}\left(\int_{0}^{\infty}\|\left(fe_{\frac{\veps}{2}}\sqrt{\zeta_{t}}\right)(A)x\|^{2}\frac{dt}{t}\right)^{\frac{1}{2}}.\label{sqrtlogeq3}
				\end{align}
				In the last step we used that $t\mapsto(e_{\frac{\veps}{2}}\sqrt{\zeta_{t}})(A)$ is continuous in the operator norm which makes the first integral exist. In fact, $e^{-\frac{\veps z}{2}}\sqrt{\zeta_{t}(z)}=(zt)^{\frac{1}{2}}e^{-z\frac{t+\veps}{2}}\in\HinfO(\Sigma_{\phi})$, and hence by the functional calculus for sectorial operators,
				\begin{equation}\label{sqrtlogeq4}
				\left[e^{-\frac{\veps z}{2}}\sqrt{\zeta_{t}(z)}\right](A)=t^{\frac{1}{2}}A^{\frac{1}{2}}T\left(\frac{t+\veps}{2}\right).
				\end{equation}
				For $s>0$ we have that $A^{\frac{1}{2}}T(s)=A^{-\frac{1}{2}}AT(s)=A^{-\frac{1}{2}}\frac{\partial}{\partial s}T(s)$. 
				Since  $s\mapsto T(s)$ is $C^{\infty}(\Rp,\Bo(X))$ for analytic semigroups 
				and $A^{-\frac{1}{2}}\in\Bo(X)$ as $0\in\rho(A)$, 
				we get indeed that $t\mapsto(e_{\frac{\veps}{2}}\sqrt{\zeta_{t}})(A)$
				 is continuous in the operator norm.\newline
				 By (\ref{sqrtlogeq2}) we can estimate the second integral in (\ref{sqrtlogeq3}) and find
					\begin{equation}\label{sqrtlogeq45}
				\|(fe_{\veps})(A)x\| \leq \left(\int_{0}^{\infty}\|\left(e_{\frac{\veps}{2}}\sqrt{\zeta_{t}}\right)(A)\|^{2}\frac{dt}{t}\right)^{\frac{1}{2}} \cdot K\cdot  \|f\|_{\infty,\phi} \cdot K_{\sqrt{\zeta}} \|x\|.
					\end{equation}
				Hence, it remains to study the first term in (\ref{sqrtlogeq45}). 
				By (\ref{sqrtlogeq4}) and Lemma \ref{le:fracT(t)growth},
				\begin{align}
				\int_{0}^{\infty}\|\left(e^{-\frac{\veps}{2}\cdot}\sqrt{\zeta_{t}}\right)(A)\|^{2}\frac{dt}{t}={}& \int_{\frac{\veps}{2}}^{\infty} \|A^{\frac{1}{2}}T(t)\|^{2}dt
				\leq{}\tilde{C}^{2} \int_{\frac{\veps}{2}}^{\infty} \frac{e^{-2tR\kappa\cos\omega}}{t}dt
				={}\tilde{C}^{2}\:{\rm Ei}\left(\kappa\veps R\cos\phi\right)\label{sqrtlogeq5},
				\end{align}
				for $\kappa\in(0,1)$, $R=\frac{1}{\|A^{-1}\|}$ and $\tilde{C}=C_{\frac{1}{2},\kappa} M(A,\phi)(\cos\phi)^{-\frac{1}{2}}>0$,  see Lemma \ref{le:fracT(t)growth}.
				\end{proof}
				\begin{remark}\label{rem:Yakubovich}
				In \cite{Yakubovich2011}, Gal\'e, Miana and Yakubovich draw a connection  between the $H^{\infty}$-calculus for sectorial operators and the theory of functional models for Hilbert space operators.
				In addition, they prove a \textit{logarithmic gap} (as they call it) between the Hilbert space $X$ and $X_{A}$. $X_{A}$ is the space of elements of $X$ such that
				\begin{equation*}
					\|x\|_{A}^{2}=\int_{0}^{\infty}\|\zeta(tA)x\|^{2}\frac{dt}{t}<\infty,
				\end{equation*}
				for some $\zeta\in\HinfO(\Sigma_{\phi})\setminus \left\{0\right\}$.
				One can show that the $H^{\infty}(\Sigma_{\phi})$-calculus is bounded if and only if the norm $\|\cdot\|_{A}$ is equivalent to the norm of the space $X$, see \cite{Yakubovich2011} and the references therein.
				 In the view of \cite[Sec.~6.4]{haasesectorial}, $X_{A}$ is the \textit{intermediate space} $X_{0,\zeta,2}$. This space, in turn, can be shown to be equal to the real interpolation space $\left(X^{(1)}, X^{(-1)}\right)_{\frac{1}{2},2}$\:, see \cite[Thm.~6.4.5]{haasesectorial}, where $X^{(1)}$ and $X^{(-1)}$ are the \textit{homogeneous spaces} for $A$.			 
				In \cite{Yakubovich2011}, the \textit{logarithmic gap} refers to the result that 
				\begin{equation}\label{eq:loggap}
					\forall r>\tfrac{1}{2}\ \exists\ c_{r}>0:\quad c_{r}^{-1}\|\Lambda_{1}(A)^{-r}x\| \leq \|x\|_{A}\leq c_{r}\|\Lambda_{1}(A)^{r}x\|,
				\end{equation}
				for all $x\in\Lambda_{1}(A)^{-r}X$, where $\Lambda_{1}(z)=Log(z)+2\pi i$ (here, $Log$ denotes the principal branch of the logarithm) and where $\Lambda_{1}^{-r}(A)X$ is interpreted as a (dense) subspace of $X$, see Theorem 2.1 in \cite{Yakubovich2011}. We learned from D.~Yakubovich that this result can be used to derive estimates of $\|(fe_{\veps})(A)\|$ of the form in (\ref{eq:int1}), which are slightly weaker than our results presented here. \newline
				However, as $X_{A}$ is an interpolation space, \eqref{eq:loggap} should be rather seen as the consequence of the `idea' that functional calculus properties for $A$ improve in the corresponding interpolation spaces, see \cite[Sec.~6.5]{haasesectorial}.
				More generally, this motivates the study of the relation between our results and interpolation spaces. This is subject to future research.
				\end{remark}

				The following theorem proves that the result in Theorem \ref{thm:sqfctestlog} is essentially sharp.
				\begin{theorem}
				There exist a Hilbert space $X$, $g\in\Hinf(\Cp)$ and $K_{0}>0$ such that for any $\delta\in(0,\frac{1}{4})$ there exists $A\in\Sect(0)$ on $X$ with the following properties.
				\begin{enumerate}[label={(\roman*)}]
				\item\label{thm:4.5i}  $0\in\rho(A)$ and ${\rm dist}(0,\sigma(A))=2$.
				\item $A^{*}$ satisfies square function estimates.
				\item \label{thm:4.5iii} For $\veps\in(0,\tfrac{1}{4})$,
				\begin{equation}
				\|(ge_{\veps})(A)\| \geq K_{0}\cdot |\log(\veps)|^{\frac{1}{2}-\delta} .
				\end{equation}
					\item  \label{thm:4.5iv}
		There exists $c_{\delta}$ such that for all $\veps>0$ and $f\in\Hinf(\Cp)$,
		\begin{equation}\label{2eq:Thmsqft}
			\|(fe_{\veps})(A)\| \leq c_{\delta}\cdot {\rm Ei}(\veps)^{\frac{1}{2}-\frac{\delta}{6}}\cdot \|f\|_{\infty},
		\end{equation}
				\end{enumerate}
				\end{theorem}
				\begin{proof}
				Let us consider $X=L^{2}(-\pi,\pi)$, $\beta\in(\frac{5}{12},\frac{1}{2})$ and the basis $\{\Psi_{n}\}_{n\in\N}$ from Lemma \ref{le:settingexampleit6}. Let $A$ be $\mathcal{M}_{\left\{2^{n}\right\}}$ with respect to $\{\Psi_{n}\}_{n\in\N}$. 
				By Theorem \ref{thm:GenMult}, $A\in\Sect(0)$ and \ref{thm:4.5i} holds.\smallskip
				
				It is not hard to see that $A^{*}$ equals the multiplication operator $\mathcal{M}_{\left\{2^{n}\right\}}$ with respect to the basis $\{\Psi_{n}^{*}\}_{n\in\N}$ defined in Lemma \ref{le:settingexampleit6}~\ref{le:A4ii}.  It is well-known that if the basis is Besselian, then $\mathcal{M}_{\left\{2^{n}\right\}}$ with respect to this basis satisfies square function estimates, see e.g.\ \cite[Proof of Thm.~5.2]{LeMerdy2003}. By the Lemma, $\{\Psi_{n}^{*}\}_{n\in\N}$ is Besselian, hence, $A^{*}$ satisfies square function estimates.
				\smallskip 

	Let $x=\sum_{n}x_{n}\Psi_{n}$ and $y=\sum_{n}y_{n}\Psi_{n}^*$ be as in Lemma \ref{le:settingexampleit6}~\ref{le:A4iv}. By the form of $\{y_{n}\}$ and since $(2^{n})_{n\in\N}$ is interpolating, we find $g\in\Hinf(\Cp)$ (independent of $\beta$) such that $g(2^{n})={\rm sgn}(y_{n})$ for all $n\in\N$. Hence, since $\langle \Psi_{n},\Psi_{m}^{*}\rangle=\delta_{mn}$,
	\begin{align}\notag
	\langle (g e_{\veps})(A)x,y\rangle = {}&\sum_{n\in\N} g(2^{n})e^{-2^{n}\veps} x_{n}y_{n}=\sum_{n\in\N} e^{-2^{n}\veps}|x_{n}y_{n}|\\
	\geq{}&\tfrac{c_{3}c_{4}}{1-2\beta}\sum_{k\in\N}(e^{-2^{2k}\veps}+e^{-2^{2k+1}\veps})k^{-2+3\beta}\stackrel{\eqref{eq:growthlemma2}}{\geq} \tfrac{c_{3}c_{4}c_{-3/4,4}}{1-2\beta}|\log(\veps)|^{-1+3\beta},
	\end{align}
	where we used Lemma \ref{le:settingexampleit6}~\ref{le:A4iv}, and Lemma \ref{le:growthlemma}~\ref{itGL1} noting that $-2+3\beta\in(-\frac{3}{4},0)$.  Since $\|x\|_{L^{2}}\:\|y\|_{L^{2}}\sim\frac{1}{1-2\beta}$ and
	by defining $\beta=\frac{1}{2}-\frac{\delta}{3}$,  assertion \ref{thm:4.5iii} follows.\smallskip
	
		To show \ref{thm:4.5iv}, let  $x=\sum x_{n}\Psi_{n}$, $y=\sum y_{n}\Psi_{n}^{*}$ be general elements of $X$. For $f\in\Hinf(\Cp)$, 
		\begin{align}
		\langle (fe_{\veps})(A)x,y\rangle 
				=\sum \nolimits_{n\in\N} f(2^{n})e^{-2^{n}\veps} x_{n}y_{n},\label{eq1:Thm2}
		\end{align}
		where we used that $\langle \Psi_{n},\Psi_{m}^{*}\rangle=\delta_{nm}$. By the Cauchy-Schwarz inequality,
		\begin{equation*}
		|\langle (fe_{\veps})(A)x,y\rangle | \leq \|f\|_{\infty}\cdot \|(e^{-2^{n}\veps/2}x_{n})\|_{2}\cdot \|(e^{-2^{n}\veps/2}y_{n})\|_{2}.
		\end{equation*}
		Since  $\left\{\Psi_{n}^{*}\right\}_{n\in\N}$ is Besselian, the uniform boundedness principle implies that  there exists a constant $C_{\beta}>0$ such that $\|(y_{n})\|_{2}\leq C_{\beta} \|y\|$ for all $y\in X$. Therefore,
		\begin{equation}
		\label{eq12:Thm42}
		|\langle (fe_{\veps})(A)x,y\rangle | \leq C_{\beta}\:\|f\|_{\infty}\: \|(e^{-2^{n-1}\veps}x_{n})\|_{2}\: \|y\|.
		\end{equation}
By (\ref{eq23:Thm42}) in Lemma \ref{le:settingexampleit6}~\ref{le:A4v}, 
		\begin{align*}
		|\langle (fe_{\veps})(A)x,y\rangle | \leq {}& C_{\beta}\: C_{6}\:\|(n^{\beta-1})\|_{\frac{3-2\beta}{4}}\:\|f\|_{\infty}\: {\rm Ei}(\veps)^{\frac{1+2\beta}{4}} \: \|x\|\: \|y\|.
		\end{align*}
		Substituting $\beta=\frac{1}{2}-\frac{\delta}{3}$ and $c_{\delta}:=C_{6}\:C_{\beta}\:\|(n^{\beta-1})\|_{\frac{3-2\beta}{4}}$ yields (\ref{2eq:Thmsqft}).
		\end{proof}

 \section{Discussion and Outlook}
\label{discussion}
	\subsection{Comparison with a result of Haase $\&$ Rozendaal}
		In \cite{HaaseRozendaal13} Haase and Rozendaal derived a result of the type of Theorem \ref{thm1} for Hilbert spaces, but for general bounded, not necessarily analytic, $C_0$-semigroups. 
		We devote this subsection to compare the results, in particular the dependence on the semigroup bound and the sectorality constant, respectively.
		We define the right half-plane ${\rm R}_{\delta}=\left\{z\in\C:\Re z>\delta\right\}$. 
		Using transference principles developed by Haase in \cite{haasetransference11}, the following result was proved in \cite{HaaseRozendaal13}.
		\begin{theorem}[Haase, Rozendaal, Corollary 3.10 in \cite{HaaseRozendaal13}]\label{thm:haaserozendaal}
			Let $H$ be a Hilbert space and $-A$ generate a bounded semigroup $T$ on $H$ and define $B=\sup_{t>0} \| T(t)\|$.
			Then, there exists an absolute constant $c>0$ such that for all $\veps,\delta>0$ the following holds. \newline
			For $f\in \Hinf({\rm R}_{\delta})$, the operator $(fe_{\veps})(A)=f(A)T(\veps)$ is bounded and
				\begin{equation}\label{eq:HaaseRozendaal}
					\|(fe_{\veps})(A)\| \leq B^{2} \cdot\eta(\delta,\veps)\cdot \|f\|_{\infty,{\rm R}_{\delta}},
				\end{equation}
				where 	
				\begin{equation*}
					\eta(\delta,\veps)=\left\{\begin{array}{ll}c |\log(\veps\delta)|, &\delta\veps\leq\tfrac{1}{2},\\
					2c,&\delta\veps>\tfrac{1}{2}.\end{array}\right.
				\end{equation*}
		\end{theorem}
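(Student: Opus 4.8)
Since this is Corollary~3.10 of \cite{HaaseRozendaal13}, the plan is to reconstruct the transference argument of Haase and Rozendaal. First, by a density (Convergence-Lemma-type) argument it suffices to treat $f=\widehat{\mu}$ with $\mu$ a nice measure supported in $[\delta,\infty)$; then, absorbing the exponential factor, $(fe_\veps)(A)=\widehat{\nu}(A)=\int_{[\delta+\veps,\infty)}T(u)\,d\nu(u)$ by the Hille--Phillips calculus, where $\widehat{\nu}=fe_\veps\in\Hinf({\rm R}_\delta)$ and $\|\widehat{\nu}\|_{\infty,{\rm R}_\delta}\le\|f\|_{\infty,{\rm R}_\delta}$. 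Since the crude bound $\|\widehat{\nu}(A)\|\le B\,\|\nu\|$ is worthless here, one needs a genuinely different tool.

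That tool is Haase's transference principle \cite{haasetransference11}: one windows the orbit $s\mapsto T(s)x$ suitably into $L^2(\R_{+},H)$ (one factor $B$), represents $\widehat{\nu}(A)x$ as the image of that window under a Fourier multiplier on $L^2(\R,H)$ whose symbol $m$ is built from $\nu$ and the window, and projects back (a second factor $B$), arriving at $\|\widehat{\nu}(A)\|\le B^2\,\|m\|_{M_2}$, where $M_2$ denotes the Fourier-multiplier norm on $L^2(\R,H)$. On the Hilbert space $H$ this last norm is, by Plancherel, of $L^\infty$-type --- this is exactly the point where the Hilbert-space geometry enters, and over a general Banach space one would instead have to restrict to the analytic multiplier algebra $\mathcal{AM}_p(X)$ or to assume $\gamma$-boundedness of $T$.

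The heart of the proof --- and the step I expect to be the main obstacle --- is the resulting analytic estimate $\|m\|_{M_2}\lesssim\eta(\delta,\veps)\,\|f\|_{\infty,{\rm R}_\delta}$, which must account for the smooth truncation carried by the window. This is precisely the adaptation of the Haase--Hyt\"onen lemma \cite[Lemma~A.1]{haasetransference11} announced in the introduction. By the scale-invariance $A\mapsto cA$ one may normalize $\delta=1$, so that $\eta$ depends only on the dimensionless product $\veps\delta$; then one exploits the holomorphy of $f$ on ${\rm R}_\delta$ (equivalently $f(\cdot+\delta)\in\Hinf(\Cp)$) to shift the contour in the Fourier inversion integral for the kernel of $m$, and the competition between this contour shift and the scale of the truncation produces an $L^1(\R)$ kernel norm of order $|\log(\veps\delta)|$ for $\veps\delta\le\tfrac12$ and $O(1)$ otherwise --- that is, exactly $\eta(\delta,\veps)$. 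Making this choice of window optimal is the delicate part; a naive choice, e.g.\ weighting by $e^{-\veps s}$ so that the orbit lies in $L^2$ by exponential stability of the shifted semigroup, yields only the much weaker bound $\veps^{-1/2}$ of \cite{ZwartAdmissible}.

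This should be contrasted with the analytic-semigroup situation of Theorems~\ref{thm1} and \ref{thm:expstab} above: there $T(\veps)$ is itself a norm-convergent contour integral, no transference is needed, and the logarithm simply falls out of $\int_r^\infty e^{-\veps t\cos\phi}\,\tfrac{dt}{t}={\rm Ei}(\veps r\cos\phi)\sim|\log(\veps r)|$.
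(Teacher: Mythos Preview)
The paper does not give its own proof of this statement: Theorem~\ref{thm:haaserozendaal} is simply quoted from \cite[Corollary~3.10]{HaaseRozendaal13} for comparison purposes (Section~\ref{discussion}), with no argument supplied. So there is no ``paper's proof'' to compare your proposal against.

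That said, your sketch of the Haase--Rozendaal argument is consistent with how the present paper itself describes that proof in the introduction: transference principles from \cite{haasetransference11} yielding the $B^2$ factor, the Hilbert-space geometry entering through Plancherel (hence $M_2=L^\infty$), and the key analytic step being the adaptation of the Haase--Hyt\"onen lemma \cite[Lemma~A.1]{haasetransference11} that turns the naive $\veps^{-1/2}$ of \cite{ZwartAdmissible} into $|\log(\veps\delta)|$. Your final paragraph correctly identifies the contrast the paper is drawing: for analytic semigroups the logarithm comes cheaply from the ${\rm Ei}$ integral in the Riesz--Dunford contour, with no transference needed.
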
	
		We can now compare Theorems \ref{thm1} and \ref{thm:haaserozendaal} by setting $r_{0}=\delta$. 
		Then $\Omega_{\phi,\delta}\subset {\rm R}_{\delta}$  for all $\phi\in(0,\tfrac{\pi}{2}]$ and  thus, for functions $f\in\Hinf({\rm R}_{\delta})$, we have $\|f\|_{\infty,\Omega_{\phi,\delta}} \leq  \|f\|_{\infty,{\rm R}_{\delta}}$.
		 Hence, Theorem \ref{thm1} yields
		\begin{equation}\label{eq:ourconstant}
			\|(fe_{\veps})(A)\| \leq M(A,\phi) \cdot b(\veps,\delta,\phi)\cdot \|f\|_{\infty,{\rm R}_{\delta}},
		\end{equation}
		for all $\phi\in(\omega_{A},\tfrac{\pi}{2})$ and $f\in\Hinf({\rm R}_{\delta})$, where 
		\begin{equation*}
				b(\veps,\delta,\phi) \sim \left\{ \begin{array}{ll}|\log(\veps\delta \cos\phi)|,&  \veps \delta<\tfrac{1}{2},\\  |\log\tfrac{\cos\phi}{2}|,&\veps\delta\geq\tfrac{1}{2}.\end{array}\right.
			\end{equation*}
		Let us collect the key observations when comparing  (\ref{eq:HaaseRozendaal}) and (\ref{eq:ourconstant}).
		\begin{enumerate}
			\item The square of the semigroup bound $B$  gets replaced by the $M(A,\phi)$ in our result.
			\item Our estimate depends on another parameter $\phi$ that accounts for the fact that the spectrum is truly lying in a sector rather than the half-plane. Taking the infimum over all $\phi\in(\omega_{A},\tfrac{\pi}{2})$ in (\ref{eq:ourconstant}) yields an optimized estimate. However, then the constant dependence on $M(A,\phi)$  becomes unclear.
			See also Theorem \ref{thm2}.
			\item The dependence on $\phi$ also explains how the estimate explodes when considering operators $A$ with sectorality angle $\omega_{A}$ tending to $\tfrac{\pi}{2}$.  However, one can cover this behavior in terms of the constant $M=M(A,\tfrac{\pi}{2})$: Taking $\phi=\arccos\tfrac{1}{2M}$, we get by Lemma \ref{lemVitse} that $M(A,\phi)\leq 2 M$ and thus (\ref{eq:ourconstant}) becomes
		\begin{equation}\label{eq:ourconstant2}
			\|(fe_{\veps})(A)\| \leq M \cdot b(\veps,\delta,\arccos\tfrac{1}{2M})\cdot \|f\|_{\infty,{\rm R}_{\delta}}.
		\end{equation}
		Therefore, we get an $M$-dependence of the form $\mathcal{O}(M(\log(M)+1))$. 
		
             \item By Theorem \ref{cor1}, the semigroup bound of $e_{t}(A)$ is also of order  $\mathcal{O}(M(\log(M)+1))$. Whether $B\sim M (\log(M)+1))$ in general is still an open problem, see also \cite[Rem.~1.3]{Vitse05}. However, it is easy to see that, in general, $M(A,\pi)\leq B$. Therefore, for an absolute constant $K>0$,
			\begin{equation}
				M(A,\pi) \leq B \leq K\:M(\log(M)+1).
			\end{equation}
		\end{enumerate}
	\subsection{Besov calculus}
	We briefly introduce the following homogenous Besov space and refer to \cite[Section 1.7]{Vitse05} and the references therein for details, see also \cite{haasetransference11}. The notation follows \cite{Vitse05}.
	The space $B_{\infty,1}^{0}$ can be defined as the space of holomorphic functions $f$ on $\Cp$ such that
		\begin{equation*}
			\|f\|_{B}:= \|f\|_{\infty} + \int_{0}^{\infty} \|f'(t+i\cdot)\|_{\infty} dt<\infty.
		\end{equation*}
		Clearly, $B_{\infty,1}^{0}$, equipped with the above norm, is continuously embedded in $\Hinf(\Cp)$. 
		Moreover, $\cup _{0<\veps<\sigma}H^{\infty}[\veps,\sigma]$, see Section \ref{sec:Vitse}, lies dense in $B_{\infty,1}^{0}$ and the following norm is equivalent to $\|\cdot\|_{B}$, see \cite[Thm.~A.1]{Vitse05},
			\begin{equation*}
				\|f\|_{\ast B}=|f(\infty)|+\sum_{k\in\mathbb{Z}}\|f\ast\hat{h_{k}}\|_{\infty},
			\end{equation*}
			where $h_{k}$ is the continuous, triangular-shaped function that is linear on the intervals $[2^{k-1},2^{k}]$ and $[2^{k},2^{k+1}]$, vanishes outside $[2^{k-1},2^{k+1}]$, and such that $h_{k}(2^{k})=1$.
			Thus, $\left\{h_{k}\right\}_{k\in\N}$ is a partition of unity with $\sum_{k\in\mathbb{Z}}h_{k}\equiv1$ locally finite on $(0,\infty)$, see \cite{haasetransference11,Vitse05}. Obviously, the (inverse) Fourier-Laplace transform of $f\ast\hat{h_{k}}$ has support in $[2^{k-1},2^{k+1}]$, hence, $f\ast\hat{h_{k}}\in H^{\infty}[2^{k-1},2^{k+1}]$. 
		Therefore, it follows directly from Theorem \ref{cor1} that for $f\in B_{\infty,1}^{0}$
			\begin{equation}\label{eq1:secBesov}
				\|(f\ast\hat{h_{k}})(A)\| \leq c M (\log(M)+1) \cdot4\cdot \|f\ast\hat{h_{k}}\|_{\infty} ,
			\end{equation}	
		where $c$ is an absolute consant and $M=M(A,\frac{\pi}{2})$. The following Theorem is a slight improvement of Theorem 1.7 in \cite{Vitse05}, see also \cite[Cor.~5.5]{haasetransference11}.
		\begin{theorem}\label{thm:Besov}
		Let $A\in\Sect(\omega)$ on the Banach space $X$ with $\omega<\frac{\pi}{2}$. Let $M=M(A,\frac{\pi}{2})$. Then,
		\begin{equation*}
			\|f(A)\| \leq c M(\log(M)+1)\|f\|_{*B},
		\end{equation*}
		for all $f\in B_{\infty,1}^{0}$, where $c>0$ is an absolute constant. Thus, the $B_{\infty,1}^{0}$-calculus is bounded.
		\end{theorem}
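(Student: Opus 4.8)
The plan is to combine the dyadic (Littlewood--Paley type) block decomposition of functions in $B_{\infty,1}^{0}$ with the block estimate \eqref{eq1:secBesov}, which is Theorem~\ref{cor1}\,(ii) applied to a band whose two endpoints have the \emph{fixed} ratio $4$. First I would recall from \cite[Section~1.7]{Vitse05} (see also \cite{haasetransference11}) that every $f\in B_{\infty,1}^{0}$ decomposes as $f=f(\infty)+\sum_{k\in\mathbb{Z}}f\ast\hat{\phi_{k}}$, with $f\ast\hat{\phi_{k}}\in H^{\infty}[2^{k-1},2^{k+1}]$, with $\sum_{k\in\mathbb{Z}}\|f\ast\hat{\phi_{k}}\|_{\infty}\le\|f\|_{\ast B}$, and with the series converging in $\Hinf(\Cp)$. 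The crucial structural point is that $2^{k+1}/2^{k-1}=4$ for every $k$, so the logarithmic factor $\log(\sigma e/\veps)=\log(4e)$ in Theorem~\ref{cor1}\,(ii) is an absolute constant, independent of $k$; the entire $k$-dependence of the resulting bound sits in the scalar $\|f\ast\hat{\phi_{k}}\|_{\infty}$.

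Next I would apply \eqref{eq1:secBesov} (equivalently, Theorem~\ref{cor1}\,(ii) with $\veps=2^{k-1}$, $\sigma=2^{k+1}$) to obtain $\|(f\ast\hat{\phi_{k}})(A)\|\le 4c\,M(\log M+1)\,\|f\ast\hat{\phi_{k}}\|_{\infty}$ for each $k$, with $c$ an absolute constant. Summing over $k$ and using the hypothesis $\sum_{k}\|f\ast\hat{\phi_{k}}\|_{\infty}<\infty$ shows that $\sum_{k}(f\ast\hat{\phi_{k}})(A)$ converges absolutely in $\Bo(X)$ and
\[
\Bigl\|f(\infty)\ide+\sum_{k\in\mathbb{Z}}(f\ast\hat{\phi_{k}})(A)\Bigr\|\le |f(\infty)|+4c\,M(\log M+1)\sum_{k\in\mathbb{Z}}\|f\ast\hat{\phi_{k}}\|_{\infty}\le c'\,M(\log M+1)\,\|f\|_{\ast B},
\]
where the final step absorbs the term $|f(\infty)|$ into the constant, using that $M\ge 1$ (so $M(\log M+1)\ge 1$, cf.\ \cite[Prop.~2.1.1]{haasesectorial}) and possibly enlarging $c$ to $c'$.

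It then remains to identify the operator $f(\infty)\ide+\sum_{k}(f\ast\hat{\phi_{k}})(A)$ with the value $f(A)$ of Vitse's $B_{\infty,1}^{0}$-calculus. By the previous step the defining series converges in operator norm; that this value is independent of the chosen dyadic partition of unity and that $f\mapsto f(A)$ is an algebra homomorphism extending the holomorphic functional calculus follow as in \cite[Section~1.7]{Vitse05}, using the consistency of the (extended) Hille--Phillips calculi on $\bigcup_{0<\veps<\sigma}H^{\infty}[\veps,\sigma]$ and the density of this union in $B_{\infty,1}^{0}$. Granting this, the displayed inequality is exactly the asserted estimate, and boundedness of the calculus in the sense of \eqref{ineq:bddcalc} is immediate.

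The only genuinely new content over \cite[Theorem~1.7]{Vitse05} is the improved $M$-dependence, and this is inherited verbatim from Theorem~\ref{cor1}. The one point requiring some care — but not, I expect, any essential difficulty — is the bookkeeping that the operators $(f\ast\hat{\phi_{k}})(A)$ used here, defined through the sectorial holomorphic calculus (each $f\ast\hat{\phi_{k}}$ is entire, bounded on $\Cp$, hence lies in $\Hinf_{(0)}(\Sigma_{\delta})$ for $\delta$ small enough by Lemma~\ref{lemBoas}, so $(f\ast\hat{\phi_{k}})(A)\in\Bo(X)$ by \eqref{rieszdunford}), coincide with the operators to which Theorem~\ref{cor1} is applied; this is guaranteed by the consistency of all the calculi involved.
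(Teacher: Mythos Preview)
Your proposal is correct and follows essentially the same route as the paper: decompose $f$ into dyadic blocks $f\ast\hat{\phi_{k}}\in H^{\infty}[2^{k-1},2^{k+1}]$, apply the block estimate \eqref{eq1:secBesov} (Theorem~\ref{cor1}\ref{cor1:it3} with the fixed ratio $\sigma/\veps=4$), and sum. The only noteworthy difference is in the identification step: where you invoke Vitse's framework and the density of $\bigcup_{0<\veps<\sigma}H^{\infty}[\veps,\sigma]$ to conclude that the norm-convergent operator series equals $f(A)$, the paper instead argues directly that the partial sums $\sum_{|k|\le N}\hat{\phi_{k}}\ast f$ are uniformly bounded in $\Hinf(\Cp)$ and converge pointwise to $f-f(\infty)$, and then applies the Convergence Lemma \cite[Proposition~5.1.1]{haasesectorial} to obtain $f(A)=f(\infty)+\sum_{k}(\hat{\phi_{k}}\ast f)(A)$; this is slightly more self-contained but amounts to the same thing.
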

		\begin{proof}
		It is easy to see that for $g\in H^{\infty}[\veps,\sigma]$ with $0<\veps<\sigma<\infty$, 
		\begin{equation}\label{eq2:secBesov}
			g(z)= \sum_{k\in\mathbb{Z}} (\hat{h_{k}}\ast g)(z),\quad z\in\Cp
		\end{equation}
		because the inverse Fourier transform of $g$ has compact support. \newline Let $f\in B_{\infty,1}^{0}$. Since $\cup _{0<\veps<\sigma}H^{\infty}[\veps,\sigma]$ is dense in $B_{\infty,1}^{0}$, see \cite{Vitse05}, we find a sequence $g_{n}\in\Hinf[\frac{1}{n},n]$ such that $g_{n}\to (f-f(\infty))$ in $B_{\infty,1}^{0}$ as $n\to\infty$. Thus, $g_{n}\to f-f(\infty)$ in $\|\cdot\|_{\infty}$ and $\|\cdot\|_{\ast B}$. Therefore, by (\ref{eq2:secBesov}) and the fact that $\hat{h_{k}}\ast(f-f(\infty))=\hat{h_{k}}\ast f$ we have that
		\begin{equation}\label{eq3:secBesov}
			f(z)=f(\infty)+\sum_{k\in\mathbb{Z}}(\hat{h_{k}}\ast f)(z), \quad z\in\Cp.
		\end{equation}
		Since $\|\sum_{|k|\leq N} (\hat{h_{k}}\ast f)\|_{\infty}\leq \|f\|_{\ast B}$ for $N\in\N$, the Convergence Lemma, \cite[Prop.~5.1.4]{haasesectorial}, implies 
		\begin{equation*}
			f(A)=f(\infty)+\sum_{k\in\mathbb{Z}}(\hat{h_{k}}\ast f)(A)
		\end{equation*}
		and the assertion follows from (\ref{eq1:secBesov}).
		\end{proof}
		\begin{remark}
		\begin{enumerate}
			\item In \cite[Thm.~1.7]{Vitse05} Vitse already showed that the $B_{\infty,1}^{0}$-calculus is bounded where the bound of the calculus was estimated by $31 M^{3}$. Like in our proof, she derived the result from an $\Hinf$-calculus estimate for $\Hinf[\veps,\sigma]$. 
			\item In \cite{haasetransference11} Haase showed that for (polynomially) bounded semigroups on Hilbert spaces, one can consider more general homogenous Besov spaces $B_{\infty,1}^{s}$, $s\geq0$. $B_{\infty,1}^{s}$ consists of functions $f$, holomorphic on $\Cp$, and such that $\lim_{z\to\infty}f(z)$ exists and 
			\begin{equation*}
				\|f\|_{\ast B^{s}}:=|f(\infty)| + \sum_{k<0} \|\hat{h_{k}}\ast f\|_{\infty} + \sum_{k\geq0} 2^{ks}\|\hat{h_{k}}\ast f\|_{\infty}<\infty. 
			\end{equation*}
			It is easy to see that Theorem \ref{thm:Besov} holds for $B_{\infty,1}^{s}$ with the analogous proof as for $B_{\infty,1}^{0}$.
		\end{enumerate}
		
		\end{remark}
			
	\subsection{Final remarks and outlook}
		Let us conclude by mentioning the well-known relation between analytic semigroup generators and Tadmor--Ritt operators, see e.g.\ \cite{haasesectorial,Vitse2005b,Vitse05}. A bounded operator $T$ is called \textit{Tadmor--Ritt} if its spectrum lies in the closed unit disc and its resolvent satisfies that
		\begin{equation*}
				C(T):=\sup_{|z|>1}\|(z-1)R(z,T)\|<\infty,
		\end{equation*}
		see \cite{Ritt53,TadmorLAA}. Such operators are of interest in the study of stability of numerical schemes. 
		Moreover, they can be seen as the discrete counterpart of sectorial operators. 
		In \cite{Vitse04,Vitse2005b}, Vitse  discussed $H^{\infty}$- and Besov space functional calculi for Tadmor--Ritt operators with similar ideas as in the continuous case, \cite{Vitse05}. It seems natural to use discrete versions of the techniques used in this paper to improve these results. Such results were recently obtained by the author, \cite{SchwenningerTR}.\\
		We point out that in Theorems \ref{thm1} and \ref{thm:expstab} the operator $A$ need not be densely defined. 
		Thus, in the view of analytic semigroups, $e_{t}(A)$ need not be strongly continuous at $0$, see \cite[Sec.~3.3]{haasesectorial}. 
		
		Looking back to Propositions \ref{prop1} and \ref{prop3} which served as a starting point to study $\|(fe_{\veps})\|$ to quantify the (un)boundedness, we can ask ourselves which other functions $g_{\veps}$ with $g_{\veps}\to 1$ as $\veps\to0$ can be studied in order to characterize a bounded calculus. For example, one could consider $g_{\veps}(z)=z^{\veps}e^{-\veps z}$ which yields that $fg_{\veps}\in\HinfO(\Sigma_{\delta})$ for $f\in\Hinf(\Sigma_{\delta})$.\\
		Another question is how Theorem \ref{thm:sqfctestlog} generalizes to general Banach spaces. 
		As Theorem \ref{thm:McIntosh} is not true on general Banach spaces, one has to use generalized square function estimates to characterize bounded $\Hinf$-calculus then, see e.g.\ \cite{cowlingdoustmcintoshyagi,haakthesis,KunstmannWeis04}. This is subject to future work.
 
 	\section*{Acknowledgements}
	The author has been supported by the Netherlands Organisation for Scientific Research (NWO), grant no.\ 613.001.004, while he was at the University of Twente, where most of the present work was carried out.
	He is very grateful to Hans Zwart for numerous discussions, his comments on the manuscript and many helpful suggestions. 
	The author would also like to thank Markus Haase and Jan Rozendaal for fruitful conversations on functional calculus. 
	He is very thankful to Dmitry Yakubovich for bringing the relations to the paper \cite{Yakubovich2011} to his attention. \newline Finally, he wants to express his gratitude to the anonymous referee for his very careful reading and his suggestions which helped a lot to improve the presentation of the paper.
	
 	\begin{appendix}
	\section{Some results about certain Schauder bases}
	\begin{applemma}[Growth Lemma]\label{le:growthlemma}
	Let $b>1$ and $\gamma_{0}\in(-1,0)$.
	\begin{enumerate}[label={(\roman*)}]
	\item There exist $c_{\gamma_{0},b}$, $C_{\gamma_{0},b}>0$ such that\label{itGL1} for $0<\veps<\frac{1}{2b}$ and $\gamma\in(\gamma_{0},0)$,
		\begin{align}
		c_{\gamma_{0},b} \log\left(\tfrac{1}{\veps}\right)^{1+\gamma}&\leq\sum_{n=1}^{\infty}n^{\gamma}e^{-b^{n}\varepsilon}
		\leq C_{\gamma_{0},b} \log\left(\tfrac{1}{\veps}\right)^{1+\gamma}. \label{eq:growthlemma2}
	\end{align}

	\item\label{itGL2} For all $\veps>0$, $\sum_{n=1}^{\infty}e^{-b^{n}\varepsilon}\leq\frac{{\rm Ei}(\veps)}{\log(b)}.$
\end{enumerate}
	\end{applemma}
	
	\begin{proof}
		We estimate $\int_{1}^{\infty}x^{\gamma}e^{-b^{x}\varepsilon}dx$. Substitute $y=b^{x}\varepsilon$, thus, $x=\frac{\log(y/\varepsilon)}{\log(b)}$,
	 \begin{align*}
	 \int_{1}^{\infty}x^{\gamma}e^{-b^{x}\varepsilon}\ dx={}&\tfrac{1}{\log(b)^{1+\gamma}}\int_{\varepsilon b}^{\infty}\log\left(\frac{y}{\varepsilon}\right)^{\gamma}\frac{e^{-y}}{y} \ dy\\
	 ={}&\tfrac{1}{\log(b)^{1+\gamma}}~\Big(\int_{\varepsilon b}^{1}\log\left(\frac{y}{\varepsilon}\right)^{\gamma}\frac{e^{-y}}{y} \ dy+\underbrace{\int_{1}^{\infty}\log\left(\frac{y}{\varepsilon}\right)^{\gamma}\frac{e^{-y}}{y} \ dy}_{\leq \log(\frac{1}{\veps})^{\gamma} {\rm Ei}(1)< \log(b)^{\gamma} {\rm Ei}(1)}\Big).
	\end{align*} 
	Because $e^{-1}\leq e^{-y}\leq1$ for $y\in (\varepsilon b,1)$ and
since the primitive of $\frac{\log(y/\varepsilon)^{\gamma}}{y}$ is $\frac{(\log(y/\varepsilon))^{1+\gamma}}{1+\gamma}$,
 we obtain
 \begin{equation}\label{eq2:LeA1}
e^{-1}\tfrac{\log(1/\veps)^{1+\gamma}-\log(b)^{1+\gamma}}{\log(b)^{1+\gamma}(1+\gamma)}\leq \int_{1}^{\infty}x^{\gamma}e^{-b^{x}\varepsilon}\ dx \leq 
		\tfrac{\log(1/\veps)^{1+\gamma}-\log(b)^{1+\gamma}}{\log(b)^{1+\gamma}(1+\gamma)}+\tfrac{{\rm Ei}(1)}{\log(b)}.
			\end{equation}	
  Next we use that for the decreasing, integrable function $f:[1,\infty)\rightarrow \Rp$, $x\mapsto x^{\gamma}e^{-b^{x}\veps}$ holds that
	\begin{equation*}
	\int_{1}^{\infty}f(x)\ dx\leq \sum_{n=1}^{\infty}f(n)\leq f(1)+\int_{1}^{\infty} f(x)\ dx.
	\end{equation*}
	(\ref{eq:growthlemma2}) follows by estimating the left and the right-hand side term in \eqref{eq2:LeA1}.
	Finally, \ref{itGL2} follows by
	\begin{align*}
		 \sum_{n=1}^{\infty}e^{-b^{n}\varepsilon}\leq {}& \int_{0}^{\infty}e^{-b^{x}\varepsilon}\ dx
				=\frac{1}{\log(b)}\int_{\varepsilon}^{\infty}\frac{e^{-y}}{y} \ dy
					=\frac{{\rm Ei}(\veps)}{\log(b)}.
	\end{align*}
	\end{proof}
	
		\begin{applemma}\label{lem:behaviorcoeff}
	There exist $c_{1},C_{1},C_{2}>0$ such that the following holds for all $n\in\N$.
	\begin{align}\label{lem:behaviorcoeffeq0}
	\forall\alpha\in(-1,1):\quad c_{n,\alpha}:=\int_{-\frac{\pi}{2}}^{\frac{\pi}{2}} |t|^{\alpha} e^{int} \ dt =C_{1,\alpha}n^{-1-\alpha} + B_{n,\alpha}\in\R,
	\end{align}
	where $C_{1,\alpha}=-2\sin\left(\alpha\frac{\pi}{2}\right)\Gamma(\alpha+1)$, $|B_{n,\alpha}|\leq C_{2}n^{-1}$. Moreover, 
	\begin{equation}\label{lem:behcoeffeq2}
	\forall \alpha\in\left(-1,-\tfrac{5}{12}\right]:\quad \tfrac{c_{1}}{1+\alpha}~n^{-1-\alpha}\leq c_{n,\alpha} \leq \tfrac{C_{1}}{1+\alpha}~n^{-1-\alpha}.
	\end{equation}
	
	\end{applemma}
	\begin{proof}
	By
$		c_{n,\alpha}=\int_{-\frac{\pi}{2}}^{\frac{\pi}{2}}|t|^{\alpha}e^{int}dt=2\Re\int_{0}^{\frac{\pi}{2}}t^{\alpha}e^{int}dt$,
	 it is clear that $c_{n,\alpha}$ is real and we can consider
	\begin{equation}\label{lem:behaviorcoeffeq1}
	\int_{0}^{\frac{\pi}{2}}t^{\alpha}e^{int} dt=n^{-1-\alpha}\int_{0}^{n\frac{\pi}{2}}t^{\alpha}e^{it}dt.
	\end{equation}
	Consider the contour consisting of the lines segments $[\veps,n\frac{\pi}{2}]$ and $i[\veps,n\frac{\pi}{2}]$ connected via quarter circles with radii $n\frac{\pi}{2}$ and $\veps$ respectively, orientated counterclockwise. Then, since $h(z)=z^{\alpha}e^{iz}$ is holomorphic on $\C\setminus\left\{z\in\R:z\leq0\right\}$,
	\begin{equation}\label{lem:behaviorcoeffeq11}
	\int_{\veps}^{n\frac{\pi}{2}}h(t)dt=\int_{\veps}^{n\frac{\pi}{2}}h(it)idt-i\int_{0}^{\frac{\pi}{2}}(n\frac{\pi}{2}e^{i\theta})^{\alpha+1}e^{in\frac{\pi}{2}e^{i\theta}}d\theta+i\int_{0}^{\frac{\pi}{2}}(\veps e^{i\theta})^{\alpha+1}e^{i\veps e^{i\theta}}d\theta.
	\end{equation}
	The last two integrals can both be estimated using the fact that $|e^{ir e^{i\theta}}|=e^{-r\sin\theta}\leq e^{-r\frac{2\theta}{\pi}}$ for $\theta\in[0,\frac{\pi}{2}]$, $r>0$. This yields
	\begin{equation*}
	\left|\int_{0}^{\frac{\pi}{2}} (re^{i\theta})^{\alpha+1}e^{ire^{i\theta}}d\theta\right|\leq \frac{\pi}{2} r^{\alpha}(1-e^{-r}).
	\end{equation*}
	Therefore, the integral for $r=\veps$ goes to zero as $\veps\to0^{+}$ because $\alpha>-1$. The integral for $r=n\frac{\pi}{2}$ can be estimated by $\left(\frac{\pi}{2}\right)^{\alpha+1}n^{\alpha}$. 
	It remains to consider
	\begin{align*}
		\lim_{\veps\to0^{+}}i\int_{\veps}^{n\frac{\pi}{2}}h(it)dt=i\int_{0}^{n\frac{\pi}{2}}h(it)dt={}&e^{i(\alpha+1)\frac{\pi}{2}}\int_{0}^{n\frac{\pi}{2}}t^{\alpha}e^{-t}dt
					=e^{i(\alpha+1)\frac{\pi}{2}}\left[\Gamma(\alpha+1)-\int_{n\frac{\pi}{2}}^{\infty}t^{\alpha}e^{-t}dt\right].
	\end{align*}
	It is easily seen that there exists a constant $C$ such that $\int_{n}^{\infty}t^{\alpha}e^{-t}dt\leq C n^{\alpha}e^{-n}$ for all $\alpha\in(-1,1)$.  
	Altogether we get by (\ref{lem:behaviorcoeffeq1}) and the estimates for the terms in (\ref{lem:behaviorcoeffeq11}) that 
	\begin{equation*}
	\int_{0}^{\frac{\pi}{2}}t^{\alpha}e^{int} dt =e^{i(\alpha+1)\frac{\pi}{2}}\Gamma(\alpha+1)n^{-1-\alpha}+B_{n,\alpha},
	\end{equation*}
	with $|B_{n,\alpha}|\leq \frac{1}{n}\left[\left(\frac{\pi}{2}\right)^{\alpha+1}+Ce^{-n}\right]$.
		This yields (\ref{lem:behaviorcoeffeq0}).
		
		To show (\ref{lem:behcoeffeq2}) for $\alpha\in(-1,-\frac{5}{12}]$, note that by (\ref{lem:behaviorcoeffeq1}),
		\begin{equation*}
		c_{n,\alpha}=n^{-1-\alpha}2\int_{0}^{\frac{n\pi}{2}}t^{\alpha}\cos(t) dt.		\end{equation*}
		We define $d_{n,\alpha}=2\int_{0}^{\frac{n\pi}{2}}t^{\alpha}\cos (t) dt$ and show that $d_{3,\alpha}\leq d_{n,\alpha}\leq d_{1,\alpha}$ for $n\in\N$.
		Since $t\mapsto t^{\alpha}$ is positive and decreasing on $(0,\infty)$ it follows, by the periodicity of $\cos$ that for all $m\in\N_{0}$,
		\begin{enumerate}
			\item $d_{4m+1,\alpha}>d_{4m+2,\alpha}>d_{4m+3,\alpha}$, since $\cos (\frac{t\pi}{2})<0$ on $\left((4m+1),(4m+3)\right)$,  
		  \item $d_{4m+3,\alpha}<d_{4m+4,\alpha}<d_{4m+5,\alpha}$, since $\cos(\frac{t\pi}{2})>0$ on $\left((4m+3),(4m+5)\right)$, 
			\item $d_{4m+5,\alpha}<d_{4m+1,\alpha}$ and $d_{4m+3,\alpha}<d_{4(m+1)+3,\alpha}$
, since $t\mapsto t^{\alpha}$ is decreasing. 		\end{enumerate}
		 Inductively, this shows that $\max_{n}d_{n,\alpha}=d_{1,\alpha}$ and $\min_{n}d_{n,\alpha}=d_{3,\alpha}$. \newline
		Finally, we check that $d_{3,\alpha}>0$ if $\alpha\in(-1,-\frac{5}{12}]$,
		\begin{align*}
	d_{3,\alpha}={}&\int_{0}^{\frac{3\pi}{2}}t^{\alpha}\cos(t)\ dt\geq\int_{[0,1]\cup[\frac{\pi}{2},\frac{3\pi}{2}]}t^{\alpha}\cos(t)\ dt+\left(\tfrac{\pi}{2}\right)^{\alpha}\int_{1}^{\frac{\pi}{2}}\cos(t)dt
	\\
	\geq{}&\cos(t_{0})\int_{0}^{t_{0}}t^{\alpha}dt+\stackrel{>0}{\int_{[t_{0},1]\cup[\frac{\pi}{2},\frac{3\pi}{2}]}t^{-\frac{5}{12}}\cos(t)\ dt+\tfrac{2}{\pi}(1-\sin(1))}\stackrel{(*)}{\geq}\tfrac{\cos(t_{0})~ t_{0}^{1+\alpha}}{1+\alpha} \geq\frac{c_{1}}{1+\alpha},
	 \end{align*}
	 where $(*)$  follows for some $t_{0}\in(0,1)$ such that  
	$ \int_{[t_{0},1]\cup[\frac{\pi}{2},\frac{3\pi}{2}]}t^{-\frac{5}{12}}\cos(t)\ dt+\tfrac{2}{\pi}(1-\sin(1))> 0$.
	   The existence of such $t_{0}$ can be shown using Fresnel integrals. Clearly, $d_{1,\alpha}\leq \frac{C_{1}}{1+\alpha}$.
		\end{proof}

	\begin{applemma}\label{le:settingexample}
	Let $X=L^{2}(-\pi,\pi)$. Then there exist $c_{i}$, $C_{i}>0$, $i\in\{1,..,3\}$ such that for all $\beta\in(\frac{1}{4},\frac{1}{2})$, $w_{\beta}$ and $\left\{\Phi_{n}\right\}_{n\in\N}$ as in Definition \ref{def:L2basis},
	the following assertions hold.
	
	\begin{enumerate}[label={(\roman*)},ref={\theapplemma~(\roman*)}]
		\item\label{le:settingexampleit1} $\left\{\Phi_{n}\right\}_{n\in\N}$ forms a bounded Schauder basis of $X$ with $\frac{c_{1}}{1-2\beta}\leq\kappa_{\Phi}\leq \frac{C_{1}}{ 1-2\beta}$,  (see  (\ref{def:basisconstants}) for $\kappa_{\Phi}$).	
		\item\label{le:settingexampleit3} The family $\left\{\Phi_{n}^{*}\right\}_{n\in\N}\subset X$ given by
	$\Phi_{2k}^{*}(t)=\tfrac{1}{2\pi w_{\beta}(t)}e^{ikt}$,  $\Phi_{2k+1}^{*}(t)=\tfrac{1}{2\pi w_{\beta}(t)}e^{-ikt}$, 
	satisfies $\langle \Phi_{n}^{*},\Phi_{m}\rangle_{L^{2}} =\delta_{nm}$ 
	and forms a Schauder basis with $\frac{c_{2}}{1-2\beta}\leq\kappa_{\Phi^{*}}\leq \frac{C_{2}}{1-2\beta}$. 
		\item\label{le:settingexampleit4} The coefficients of $x(t)= |t|^{-\beta}\mathbb{1}_{(0,\frac{\pi}{2})}(|t|)$, $x=\sum_{n}x_{n}\Phi_{n}$ are positive and satisfy
			\begin{equation}\label{eq:alphaknest}
		 c_{3} \tfrac{k^{-1+2\beta}}{1-2\beta} \leq x_{2k}= x_{2k+1}\leq C_{3} \tfrac{k^{-1+2\beta}}{1-2\beta} ,\qquad k\in\N\cup\{0\}.
			\end{equation}
	For the coefficients of $y(t)=(\pi-|t|)^{-\beta}\mathbb{1}_{(\frac{\pi}{2},\pi)}(|t|)$, $y=\sum_{n}y_{n}\Phi_{n}^{*}$, we have that 
	\begin{equation}\label{coeffyk}
	y_{2k}=(-1)^{k}2\pi\cdot x_{2k},\quad y_{2k+1}=(-1)^{k} 2\pi \cdot x_{2k+1}, \qquad k\in\N\cup\{0\}.
	\end{equation}
	\end{enumerate}
	\end{applemma}
	\begin{proof}
	The fact that $\left\{\Phi_{n}\right\}_{n\in\N}$ and $\left\{\Phi_{n}^{*}\right\}_{n\in\N}$ form bounded Schauder bases can for instance be found in \cite[Lem.~4.1 and Ex.~4.4]{EisnerZwart06}. In the proof of \cite[Lem.~4.1]{EisnerZwart06}, one can find a known method to derive the basis constants $\kappa_{\Phi}$, $\kappa_{\Phi^{*}}$ from the bound of the Hilbert-transform acting on weighted $L^{2}$ spaces (with $A_{2}$-weights), see also \cite{Petermichl07}. It is easy to see that $\langle \Phi_{n}^{*},\Phi_{m}\rangle_{L^{2}} =\delta_{nm}$.
	\smallskip
	
	To see \ref{le:settingexampleit4} we point out that for all $x=\sum_{n}x_{n}\Phi_{n}\in X$ there holds
		\begin{equation}\label{lem:basiseq1}
		x_{n}=\langle x, \Phi_{n}^{*}\rangle _{L^{2}}, \qquad n\in\N.
		\end{equation}
		Thus, for $x=(t\mapsto |t|^{-\beta}\mathbb{1}_{(0,\frac{\pi}{2})}(|t|))$, $k\in\N$,
	\begin{equation}\label{lem:basiseq2}
	x_{2k}=\frac{1}{2\pi}\int_{-\pi/2}^{\pi/2}|t|^{-2\beta}e^{-ik t} dt=\frac{c_{k,-2\beta}}{2\pi}, \quad x_{2k+1}=\frac{1}{2\pi}\int_{-\pi/2}^{\pi/2}|t|^{-2\beta}e^{ik t}dt=x_{2k},
	\end{equation}
	where $c_{k,-2\beta}$ are the coefficients from Lemma \ref{lem:behaviorcoeff}. Moreover, since $-2\beta\in(-1,-\frac{1}{2})$, (\ref{eq:alphaknest}) follows by \eqref{lem:behcoeffeq2}. It is easy to see that $\frac{c_{3}}{1-2\beta}\leq x_{1}\leq \frac{C_{3}}{1-2\beta}$.
	The assertion for $y$ follows similarly.
	\end{proof}
	\begin{applemma}\label{le:settingexampleit6} Let $X=L^{2}=L^{2}(-\pi,\pi)$. There exist $c_{i}$, $C_{i}>0$, $i\in\{1,..,6\}$ such that for all $\beta\in(\frac{5}{12},\frac{1}{2})$ and $\left\{\Psi_{n}\right\}_{n\in\N}\subset X$  defined by
				\begin{equation*}
				\Psi_{2k}(t)=|t|^{\beta}e^{ikt}, \quad \Psi_{2k+1}(t)=|t|^{\beta}e^{-ikt},\ k\in\N\cup\{0\},
				\end{equation*}
				the following assertions hold.
				\begin{enumerate}[label=(\roman*)]
				\item\label{le:A4i} $\{\Psi_{n}\}_{n\in\N}$ is a bounded Schauder basis with $c_{1}(1-2\beta)^{-1/2}\leq\kappa_{\Psi}\leq C_{1} (1-2\beta)^{-1/2}$.
				\item\label{le:A4ii} The family $\left\{\Psi_{n}^{*}\right\}_{n\in\N}\subset X$ given by
	$\Psi_{2k}^{*}(t)=\tfrac{1}{2\pi}|t|^{-\beta}e^{ikt}$, $\Psi_{2k+1}^{*}(t)=\tfrac{1}{2\pi}|t|^{-\beta}e^{-ikt},$
	satisfies $\langle \Psi_{n}^{*},\Psi_{m}\rangle_{L^{2}} =\delta_{nm}$ 
	and forms a Schauder basis with $c_{2}(1-2\beta)^{-1/2}\leq\kappa_{\Psi^{*}}\leq C_{2}(1-2\beta)^{-1/2}$.
	\item\label{le:A4iii}  $\left\{\Psi_{n}^{*}\right\}_{n\in\N}\subset X$ is Besselian, i.e.
				$\forall y=\sum_{n\in\N}y_{n}\Psi_{n}^{*}\in X \ \Rightarrow \ (y_{n})\in\ell^{2}(\N).$
	\item \label{le:A4iv}	The coefficients of $x(t)= |t|^{-\beta}\mathbb{1}_{(0,\frac{\pi}{2})}(|t|)$, $x=\sum_{n}x_{n}\Psi_{n}$ are positive and satisfy
			\begin{equation}\label{eq:alphaknest2}
		 c_{3} \tfrac{k^{-1+2\beta}}{1-2\beta} \leq x_{2k}= x_{2k+1}\leq C_{3} \tfrac{k^{-1+2\beta}}{1-2\beta} ,\qquad k\in\N\cup\{0\}.
			\end{equation}
	For the coefficients of $y(t)=|t|^{-\beta}(\pi-|t|)^{-\beta}\mathbb{1}_{(\frac{\pi}{2},\pi)}(|t|)$, $y=\sum_{n}y_{n}\Psi_{n}^{*}$ we have that 
	\begin{equation}\label{coeffyk2}
	y_{2k}=y_{2k+1}=(-1)^{k}\:\frac{c_{k,-\beta}}{2\pi} \quad \text{and}\quad c_{4}k^{-1+\beta}\leq|y_{2k}| \leq C_{4}k^{-1+\beta}, \qquad k\in\N\cup\{0\}.
	\end{equation}	
	\item	\label{le:A4v}		For $x=\sum_{n\in\N}x_{n}\Psi_{n}\in X$, we have that $\left\{x_{n}\right\}\in\ell^{r}$ for $r>\frac{2}{1-2\beta}$ and
				\begin{equation}\label{le:basiseqYoung}
				\|(x_{n})\|_{r} \leq C_{5} \|x\| \cdot \|n^{-1+\beta}\|_{q},\qquad \tfrac{1}{q}=\tfrac{1}{2}+\tfrac{1}{r}.
				\end{equation}
				Furthermore, 
				\begin{equation}\label{eq23:Thm42}
			\|(e^{-2^{n}\veps}x_{n})\|_{2}\leq  C_{6}\:\|(n^{\beta-1})\|_{\frac{3-2\beta}{4}}\cdot {\rm Ei}(\veps)^{\frac{1+2\beta}{4}}\|x\|.	
				\end{equation} 
				\end{enumerate}
				\end{applemma}
				\begin{proof}
				Proofs for \ref{le:A4i}--\ref{le:A4iii} can be found in \cite[Ex.~II.11.2, p.~351]{SingerBases}. Since the value of  $\kappa_{\Psi}$ is not obvious there, we refer to \cite[Lem.~4.1]{EisnerZwart06} how to derive $\kappa_{\Psi}$, see also the proof of Lemma \ref{le:settingexample}.
				\smallskip
				
				\ref{le:A4iv}: Since $\Psi_{n}(t)=\Phi_{n}(t)$ for $t\in(-\frac{\pi}{2},\frac{\pi}{2})$, with $\Phi_{n}$ from Definition \ref{def:L2basis}, it follows that $x$ has same coefficients $x_{n}$ with respect to $\left\{\Psi_{n}\right\}$ as for the basis $\left\{\Phi_{n}\right\}$. Thus, \eqref{eq:alphaknest2} holds by Lemma \ref{le:settingexampleit4}. 
		The coefficients of $y=\sum_{n}y_{n}\Psi_{n}^{*}$ are derived by using $\langle\Psi_{n},\Psi_{m}^{*}\rangle_{L^{2}}=\delta_{nm}$, $k\in\N$,
	\begin{equation*}
		y_{2k}=\langle y,\Psi_{2k}\rangle_{L^{2}}=\frac{1}{2\pi}\int_{\frac{\pi}{2}<|t|<\pi}(\pi-|t|)^{-\beta}e^{ikt}dt=\frac{(-1)^{k}}{2\pi}\int_{-\frac{\pi}{2}}^{\frac{\pi}{2}}|t|^{-\beta}e^{-ikt}dt=(-1)^{k}\:\frac{c_{k,-\beta}}{2\pi}.
	\end{equation*}
	Furthermore, $y_{2k+1}=y_{2k}$ and
	Lemma \ref{lem:behaviorcoeff} yields the estimate in \eqref{coeffyk2} since $1-\beta\in(\frac{1}{2},\frac{7}{12})$.	
				\smallskip
				
				To show \ref{le:A4v}, let $w_{\beta}(t)=|t|^{\beta}$ on $(-\pi,\pi)$. Since $\{e^{int}\}_{n\in\mathbb{Z}}$ is an orthogonal basis of $L^2$, it follows that for $x=\sum_{n\in\N}x_{n}\Psi_{n}\in X$,
	\begin{equation*}
	x_{2k}=\frac{1}{2\pi}\langle x w_{\beta}^{-1},e^{ik\cdot}\rangle_{L^{2}} =\mathcal{F}(xw_{\beta}^{-1})[k],
	\end{equation*}
	where $\mathcal{F}$ denotes the discrete Fourier transform. Thus,
	\begin{equation}\label{lem:basiseq3}
	x_{2k}=\left(\mathcal{F}(x)\ast \mathcal{F}(w_{\beta}^{-1})\right)[k].
	\end{equation}
	By $x\in L^{2}$, $\left\{\mathcal{F}(x)[n]\right\}\in\ell^{2}$. From \cite[Proof of Thm.~2.4, p.861]{Haak2012}  (see also  Lemma \ref{lem:behaviorcoeff}) we have 
	\begin{equation*}
		\int_{-\pi}^{\pi}|t|^{\gamma-1}e^{-int}\ dt =2 n^{-\gamma}\cos(\gamma\tfrac{\pi}{2})\Gamma(\gamma)+B_{n,\gamma}, 
	\end{equation*}
	for $\gamma>0$ and with $|B_{n,\gamma}|\leq \tfrac{C}{n}$ for some absolute constant $C$.
	Thus, with $\gamma=1-\beta\in(\frac{1}{2},\frac{7}{12})$, $\mathcal{F}(w_{\beta}^{-1})[n]\in \ell^{q}$ with $q>q_{0}:=\frac{1}{1-\beta}$,
	\begin{equation*}
	\|\mathcal{F}(w_{\beta}^{-1})[n]\|_{q}\leq \|(n^{-1+\beta})\|_{q}~\max\nolimits_{\gamma\in(\frac{1}{2},\frac{7}{12})}|\cos(\gamma\tfrac{\pi}{2})\Gamma(\gamma)|+\|(n^{-\frac{12}{7}})\|_{\frac{7}{12}}\leq C_{5}\|(n^{-1+\beta})\|_{q}.
	\end{equation*}
	We use Young's inequality with $\frac{1}{2}+\frac{1}{q}=1+\frac{1}{r}$ and $q\in(q_{0},2)$ to estimate the right-hand-side of (\ref{lem:basiseq3}). Hence, $\left\{x_{2k}\right\}\in\ell^{r}$ for $r>r_{0}:=\frac{2}{1-2\beta}$. Analogously, $\left\{x_{2k+1}\right\}\in\ell^{r}$. Eq.\ (\ref{le:basiseqYoung}) then follows since the discrete Fourier transform is isometric from $L^{2}$ to $\ell^{2}$. \smallskip\\
	To show (\ref{eq23:Thm42}), we use  H\"older's inequality and (\ref{le:basiseqYoung}), 
		\begin{align}
		\|(e^{-2^{n-1}\veps}x_{n})\|_{2}^{2}={}&\|(e^{-2^{n}\veps}|x_{n}|^{2})\|_{1}
		\leq{}\|(e^{-2^{n}\veps})\|_{r_{0}'}\: \|(x_{n})\|_{2r_{0}}^{2}
		\leq{} C_{5}\|(e^{-2^{n}\veps})\|_{r_{0}'} \: \|(n^{-1+\beta})\|_{q}^{2}\:\|x\|^{2}\label{eq13:Thm42}
		\end{align}
		for $r_{0}'=(1-\frac{1}{r_{0}})^{-1}=\frac{2}{1+2\beta}$ and $\frac{1}{q}=\frac{1}{2}+\frac{1}{2r_{0}}=\frac{3-2\beta}{4}$. 
		By Lemma \ref{le:growthlemma}~\ref{itGL1},
		\begin{equation}\label{eq23:Thm422}
			\|(e^{-2^{n}\veps})\|_{r_{0}'=\frac{2}{1+2\beta}}\leq \left(\tfrac{1}{\log(2)}{\rm Ei}(r_{0}'\veps)\right)^{\frac{1+2\beta}{2}} \stackrel{(\ref{eq:expint})}{\leq} \log(2)~{\rm Ei}(\veps)^{\frac{1+2\beta}{2}},		
		\end{equation} 
		where we used that $r_{0}'>1$.	Thus, (\ref{eq13:Thm42}) shows (\ref{eq23:Thm42}).
				\end{proof}

\end{appendix}

 

\begin{thebibliography}{10}

\bibitem{AbramowitzExpInt}
M.~Abramowitz and I.~A. Stegun.
\newblock {\em Handbook of mathematical functions with formulas, graphs, and
  mathematical tables}, volume~55 of {\em National Bureau of Standards Applied
  Mathematics Series}.
\newblock U.S. Government Printing Office, Washington, D.C., 1964.

\bibitem{McIntoshOpHam}
D.~Albrecht, X.~Duong, and A.~McIntosh.
\newblock Operator theory and harmonic analysis.
\newblock In {\em Instructional {W}orkshop on {A}nalysis and {G}eometry, {P}art
  {III} ({C}anberra, 1995)}, volume~34 of {\em Proc. Centre Math. Appl.
  Austral. Nat. Univ.}, pages 77--136. Austral. Nat. Univ., Canberra, 1996.

\bibitem{BaillonClement91}
J.-B. Baillon and P.~Cl{\'e}ment.
\newblock Examples of unbounded imaginary powers of operators.
\newblock {\em J. Funct. Anal.}, 100(2):419--434, 1991.

\bibitem{battyhaasemubeen}
C.~Batty, M.~Haase, and J.~Mubeen.
\newblock The holomorphic functional calculus approach to operator semigroups.
\newblock {\em Acta Sci. Math. (Szeged)}, 79:289--323, 2013.

\bibitem{BenNik99}
N.-E. Benamara and N.~Nikolski.
\newblock Resolvent tests for similarity to a normal operator.
\newblock {\em Proc. London Math. Soc. (3)}, 78(3):585--626, 1999.

\bibitem{BoasEntireFunctions}
R.~P. Boas, Jr.
\newblock {\em Entire functions}.
\newblock Academic Press Inc., New York, 1954.

\bibitem{cowlingdoustmcintoshyagi}
M.~Cowling, I.~Doust, A.~McIntosh, and A.~Yagi.
\newblock Banach space operators with a bounded {$H^\infty$} functional
  calculus.
\newblock {\em J. Austral. Math. Soc. Ser. A}, 60(1):51--89, 1996.

\bibitem{EgertRozendaal13}
M.~Egert and J.~Rozendaal.
\newblock Convergence of subdiagonal {P}ad\'e approximations of
  {$C_0$}-semigroups.
\newblock {\em J. Evol. Equ.}, 13(4):875--895, 2013.

\bibitem{EisnerZwart06}
T.~Eisner and H.~Zwart.
\newblock Continuous-time {K}reiss resolvent condition on infinite-dimensional
  spaces.
\newblock {\em Math. Comp.}, 75(256):1971--1985, 2006.

\bibitem{EngelNagel}
K.-J. Engel and R.~Nagel.
\newblock {\em One-parameter semigroups for linear evolution equations}, volume
  194 of {\em Graduate Texts in Mathematics}.
\newblock Springer-Verlag, New York, 2000.

\bibitem{Fackler14Reg}
S.~Fackler.
\newblock Regularity {P}roperties of {S}ectorial {O}perators: {C}ounterexamples
  and {O}pen {P}roblems.
\newblock In W.~Arendt, R.~Chill, and Y.~Tomilov, editors, {\em Semigroups meet
  complex analysis, Harmonic Analysis and Mathematical Physics}, volume 250 of
  {\em Oper. Theory Adv. Appl.} Birkh\"auser, 2015.
\newblock 

\bibitem{Yakubovich2011}
J.~E. Gal{\'e}, P.~J. Miana, and D.~V. Yakubovich.
\newblock {$H^\infty$}-functional calculus and models of {N}agy-{F}oia\c s type
  for sectorial operators.
\newblock {\em Math. Ann.}, 351(3):733--760, 2011.

\bibitem{Garnett}
J.~B. Garnett.
\newblock {\em Bounded {A}nalytic {F}unctions}, volume 236 of {\em Graduate
  Texts in Mathematics}.
\newblock Springer, New York, first edition, 2007.

\bibitem{Gautschi59}
W.~Gautschi.
\newblock Some elementary inequalities relating to the gamma and incomplete
  gamma function.
\newblock {\em J. Math. and Phys.}, 38:77--81, 1959/60.

\bibitem{Gurarii71}
V.~I. Gurari{\u\i} and N.~I. Gurari{\u\i}.
\newblock Bases in uniformly convex and uniformly smooth {B}anach spaces.
\newblock {\em Izv. Akad. Nauk SSSR Ser. Mat.}, 35:210--215, 1971.

\bibitem{haakthesis}
B.~H. Haak.
\newblock {\em Kontrolltheorie in Banachr{\"a}umen und quadratische
  Absch{\"a}tzungen}.
\newblock {PhD} thesis, Universit\"at Karlsruhe, 2005.

\bibitem{Haak2012}
B.~H. Haak.
\newblock The {W}eiss conjecture and weak norms.
\newblock {\em J. Evol. Equ.}, 12(4):855--861, 2012.

\bibitem{haasesectorial}
M.~Haase.
\newblock {\em The {F}unctional {C}alculus for {S}ectorial {O}perators}, volume
  169 of {\em Operator Theory: Advances and Applications}.
\newblock Birkh\"auser Verlag, Basel, 2006.

\bibitem{haasehalfplaneoperators}
M.~Haase.
\newblock Semigroup theory via functional calculus.
\newblock Preprint available at:
  \url{http://fa.its.tudelft.nl/~haase/files/semi.pdf}, 2006.

\bibitem{haasetransference11}
M.~Haase.
\newblock Transference principles for semigroups and a theorem of {P}eller.
\newblock {\em J. Funct. Anal.}, 261(10):2959--2998, 2011.

\bibitem{HaaseRozendaal13}
M.~Haase and J.~Rozendaal.
\newblock Functional calculus for semigroup generators via transference.
\newblock {\em J. Funct. Anal.}, 265(12):3345--3368, 2013.

\bibitem{HavinJoericke}
V.~{Havin} and B.~{J\"oricke}.
\newblock {\em {The uncertainty principle in harmonic analysis.}}
\newblock Springer-Verlag, Berlin, 1994.

\bibitem{KaltonWeis01}
N.~J. Kalton and L.~Weis.
\newblock The {$H^\infty$}-calculus and sums of closed operators.
\newblock {\em Math. Ann.}, 321(2):319--345, 2001.

\bibitem{KunstmannWeis04}
P.~C. Kunstmann and L.~Weis.
\newblock Maximal {$L_p$}-regularity for parabolic equations, {F}ourier
  multiplier theorems and {$H^\infty$}-functional calculus.
\newblock In {\em Functional analytic methods for evolution equations}, volume
  1855 of {\em Lecture Notes in Math.}, pages 65--311. Springer, Berlin, 2004.

\bibitem{LeMerdy2003}
C.~Le~Merdy.
\newblock The {W}eiss conjecture for bounded analytic semigroups.
\newblock {\em J. London Math. Soc. (2)}, 67(3):715--738, 2003.

\bibitem{McCarthySchwartz65}
C.~A. McCarthy and J.~Schwartz.
\newblock On the norm of a finite {B}oolean algebra of projections, and
  applications to theorems of {K}reiss and {M}orton.
\newblock {\em Comm. Pure Appl. Math.}, 18:191--201, 1965.

\bibitem{mcintoshHinf}
A.~McIntosh.
\newblock Operators which have an {$H_\infty$} functional calculus.
\newblock In {\em Miniconference on operator theory and partial differential
  equations ({N}orth {R}yde, 1986)}, volume~14 of {\em Proc. Centre Math. Anal.
  Austral. Nat. Univ.}, pages 210--231. Austral. Nat. Univ., Canberra, 1986.

\bibitem{mcintoshHinfNo}
A.~McIntosh and A.~Yagi.
\newblock Operators of type {$\omega$} without a bounded {$H_\infty$}
  functional calculus.
\newblock In {\em Miniconference on {O}perators in {A}nalysis ({S}ydney,
  1989)}, volume~24 of {\em Proc. Centre Math. Anal. Austral. Nat. Univ.},
  pages 159--172. Austral. Nat. Univ., Canberra, 1990.

\bibitem{mubeenPhD}
J.~Mubeen.
\newblock {\em The bounded $\mathcal{H}^{\infty}$-calculus for sectorial,
  strip-type and half-plane operators}.
\newblock {PhD} thesis, University of Oxford, 2011.

\bibitem{Nikolski14}
N.~Nikolski.
\newblock Sublinear dimension growth in the {K}reiss matrix theorem.
\newblock {\em Algebra i Analiz}, 25(3):3--51, 2013.

\bibitem{Pazy83}
A.~Pazy.
\newblock {\em Semigroups of linear operators and applications to partial
  differential equations}, volume~44 of {\em Applied Mathematical Sciences}.
\newblock Springer-Verlag, New York, 1983.

\bibitem{Petermichl07}
S.~Petermichl.
\newblock The sharp bound for the {H}ilbert transform on weighted {L}ebesgue
  spaces in terms of the classical {$A_p$} characteristic.
\newblock {\em Amer. J. Math.}, 129(5):1355--1375, 2007.

\bibitem{Ritt53}
R.~K. Ritt.
\newblock A condition that {$\lim_{n\to\infty}n^{-1}T^n=0$}.
\newblock {\em Proc. Amer. Math. Soc.}, 4:898--899, 1953.

\bibitem{SchwenningerTR}
F.~L. Schwenninger.
\newblock {F}unctional calculus estimates for {T}admor--{R}itt operators.
\newblock{\em J. Math. Anal. Appl.}, 439(1):103--124, 2016.

\bibitem{SchweZwa2012}
F.~L. Schwenninger and H.~Zwart.
\newblock Weakly admissible {$\mathcal{H}_\infty^-$}-calculus on reflexive
  {B}anach spaces.
\newblock {\em Indag. Math. (N.S.)}, 23(4):796--815, 2012.

\bibitem{SingerBases}
I.~Singer.
\newblock {\em Bases in {B}anach spaces. {I}}.
\newblock Springer-Verlag, New York, 1970.
\newblock Die Grundlehren der mathematischen Wissenschaften, Band 154.

\bibitem{Spijker91}
M.~N. Spijker.
\newblock On a conjecture by {L}e{V}eque and {T}refethen related to the
  {K}reiss matrix theorem.
\newblock {\em BIT}, 31(3):551--555, 1991.

\bibitem{SpijkerTracognaWelfert03}
M.~N. Spijker, S.~Tracogna, and B.~D. Welfert.
\newblock About the sharpness of the stability estimates in the {K}reiss matrix
  theorem.
\newblock {\em Math. Comp.}, 72(242):697--713 (electronic), 2003.

\bibitem{TadmorLAA}
E.~Tadmor.
\newblock The resolvent condition and uniform power boundedness.
\newblock {\em Linear Algebra Appl.}, 80:250--252, 1986.

\bibitem{Vitse04}
P.~Vitse.
\newblock Functional calculus under the {T}admor-{R}itt condition, and free
  interpolation by polynomials of a given degree.
\newblock {\em J. Funct. Anal.}, 210(1):43--72, 2004.

\bibitem{Vitse2005b}
P.~Vitse.
\newblock A band limited and {B}esov class functional calculus for
  {T}admor-{R}itt operators.
\newblock {\em Arch. Math. (Basel)}, 85(4):374--385, 2005.

\bibitem{Vitse05}
P.~Vitse.
\newblock A {B}esov class functional calculus for bounded holomorphic
  semigroups.
\newblock {\em J. Funct. Anal.}, 228(2):245--269, 2005.

\bibitem{vonneumann}
J.~von Neumann.
\newblock {\em Mathematical foundations of quantum mechanics}.
\newblock Princeton Landmarks in Mathematics. Princeton University Press,
  Princeton, NJ, 1996.

\bibitem{ZwartAdmissible}
H.~Zwart.
\newblock Toeplitz operators and $\mathcal{H}^{\infty}$-calculus.
\newblock {\em J. Funct. Anal.}, 263(1):167 -- 182, 2012.

\end{thebibliography}

\end{document}